\documentclass[11pt, reqno]{amsart}
\usepackage[margin= 2.0cm]{geometry}
\usepackage{amsmath}
\usepackage{amsfonts}
\usepackage{amssymb}
\usepackage{amsthm}
\usepackage{mathtools} % An improvement of amsmath
\usepackage{latexsym}
\usepackage{enumerate}
\usepackage{cancel}
\usepackage{ragged2e}
\usepackage{blindtext}
\usepackage{cases}
\usepackage{empheq}
\usepackage{multicol}

\usepackage{wrapfig}
\usepackage{caption}

\usepackage{subfigure}
\usepackage{float}

\usepackage{booktabs}
\usepackage{array}

\usepackage{color}

\usepackage[color=white,linecolor=black]{todonotes}
\usepackage{mathrsfs}
\usepackage{fontenc} %T1 font encoding
\usepackage{inputenc} %UTF-8 support
\usepackage{enumitem}
\usepackage{verbatim}

\theoremstyle{plain}
\newtheorem{theorem}{Theorem}[section]

\newtheorem{lemma}[theorem]{Lemma}

\theoremstyle{definition}
\newtheorem{definition}[theorem]{Definition}

\theoremstyle{remark}

\newtheorem{hypothesis}{Hypothesis}
\numberwithin{equation}{section} %% Equation numbering control.
\numberwithin{figure}{section}   %% Figure numbering control.

\usepackage[square,comma,numbers,sort]{natbib}
\usepackage[colorlinks=true, pdfborder={ 0 0 0}]{hyperref}
\hypersetup{urlcolor=blue, citecolor=red}
\usepackage{url}
\makeatletter
\renewcommand{\section}{\@startsection{section}{1}{0pt}%
  {1.5ex plus .2ex minus .2ex}%
  {1.0ex plus .2ex}%
  {\normalfont\large\bfseries\raggedright}}
\renewcommand{\subsection}{\@startsection{subsection}{2}{0pt}%
  {1.25ex plus .2ex minus .2ex}%
  {1.0ex plus .2ex}%
  {\normalfont\normalsize\bfseries\raggedright}}
\makeatother
\renewenvironment{abstract}{%
  \begin{center}
    \textbf{\abstractname}
  \end{center}
}{}
\makeatother
\allowdisplaybreaks
\begin{document}
\makeatletter
% Prevent title from being forced to uppercase
\def\@settitle{\begin{center}%
  \normalfont\LARGE\bfseries \@title
  \end{center}%
}
\makeatother

\title[Stochastic MHD]{Stochastic Two-Dimensional Anisotropic Magnetohydrodynamics System with Multiplicative Noise}
% ======================  Author Information ======================
\author[M. M. Rahman]{Mohammad Mahabubur Rahman}
\address[Mohammad Mahabubur Rahman]{School of Mathematical and Statistical Sciences, 
                Clemson University, 
       Clemson, SC, 29634, USA}
\email[Mohammad Mahabubur Rahman]{rahman6@clemson.edu}

\maketitle{}
\begin{abstract}
In this paper, we investigate the stochastic two-dimensional anisotropic magnetohydrodynamics system on $\mathbb T^2$ with velocity dissipation acting only in the horizontal direction. Under multiplicative noise, we prove the existence of global martingale solutions and global probabilistically weak solutions, together with pathwise uniqueness.

\end{abstract}
\section{Introduction}

\noindent
\noindent
The incompressible magnetohydrodynamics (MHD) equations constitute a
fundamental model for the motion of electrically conducting fluids in the
presence of a magnetic field. The mathematical theory of incompressible
MHD was developed in the classical works of Duvaut and Lions
\cite{DuvautLions1972} and Sermange and Temam
\cite{SermangeTemam1983}. An important direction in the study of MHD
equations concerns global regularity under partial or fractional
dissipation and diffusion.

\medskip

\noindent
\noindent
The global regularity problem for MHD equations
has been studied extensively. Among these developments, Wu \cite{Wu2003} considered generalized
MHD equations with fractional dissipation and magnetic diffusion. In two
dimensions, Cao and Wu \cite{CaoWu2011} established global regularity
for MHD equations with mixed partial dissipation and magnetic diffusion. In particular, their result includes
the cases of vertical viscosity with horizontal magnetic diffusion and,
symmetrically, horizontal viscosity with vertical magnetic diffusion.
They also obtained global existence, uniqueness, and conditional
regularity of weak solutions for the two-dimensional MHD equations with
magnetic diffusion and without velocity dissipation. Cao, Regmi and Wu
\cite{CaoRegmiWu2013} studied the two-dimensional MHD equations with
horizontal velocity dissipation and horizontal magnetic diffusion. They
proved global $L^{2r}$ bounds for $1\leq r<\infty$ and established a
conditional global regularity criterion, while the global regularity of
classical solutions for the original system remains open. Cao, Wu and
Yuan \cite{CaoWuYuan2014} considered the two-dimensional MHD equations
without velocity dissipation and with fractional magnetic diffusion
$(-\Delta)^\beta$, and proved global regularity for $\beta>1$. The global regularity of MHD systems under partial, anisotropic, or
weakened dissipation has received considerable attention; see, for
example, \cite{yamazaki2014remarks, yamazaki2015global}
and the references therein. Further
results on two-dimensional MHD equations with partial or fractional
dissipation can be found in
\cite{DuZhou2015,JiuNiuWuXuYu2015,DongJiaLiWu2018,DongLiWu2019}.

\medskip

\noindent
\noindent
\noindent
The case of horizontal dissipation in the velocity equation and
horizontal magnetic diffusion in the magnetic equation is particularly
relevant to the present work. Global regularity of classical solutions
for the corresponding two-dimensional deterministic MHD system remains
open \cite{CaoRegmiWu2013}. This motivates the question of whether,
under multiplicative stochastic forcing, one can establish global
martingale solutions and pathwise uniqueness when only horizontal
dissipation and horizontal magnetic diffusion are present.

\medskip

\noindent

\medskip

\noindent
\noindent
Our first attempt is therefore the stochastic MHD system with horizontal
dissipation in the velocity equation and horizontal magnetic diffusion
in the magnetic equation. At the $L^2$ level, the corresponding
estimates provide control of $\partial_1u$ and $\partial_1B$, while at
the $H^{0,1}$ level they provide control of
$\partial_1\partial_2u$ and $\partial_1\partial_2B$. However, these
derivatives are not sufficient to close the coupled $H^{0,1}$ estimate.
Indeed, the anisotropic inequalities used to estimate the nonlinear MHD
terms require additional vertical control of the magnetic field, which
is not supplied by horizontal magnetic diffusion alone.

\medskip

\noindent
\noindent
For the deterministic MHD equations with mixed partial dissipation and
magnetic diffusion, Cao and Wu \cite{CaoWu2011} obtained higher-order
estimates by taking curl of the velocity and magnetic equations and
working with the vorticity and current density. A direct use of this
procedure under multiplicative stochastic forcing would differentiate
the noise coefficients, producing stochastic terms involving
$\operatorname{curl}\sigma_i(t,u,B)$ and requiring corresponding spatial
regularity assumptions on the noise. We do not impose such assumptions
here and instead work directly with the velocity and magnetic fields in
anisotropic Sobolev spaces.

\medskip

\noindent
This leads us first to the following two-dimensional deterministic
anisotropic MHD system:
\begin{subequations}
\label{MHD-deterministic}
\begin{align}
\dfrac{\partial u}{\partial t}
+(u\cdot\nabla)u
-(B\cdot\nabla)B
-\nu\partial_1^2u
+\nabla p
&=0,
\label{DMHDu}
\\
\dfrac{\partial B}{\partial t}
+(u\cdot\nabla)B
-(B\cdot\nabla)u
-\eta\Delta B
&=0,
\label{DMHDB}
\\
\nabla\cdot u&=0,
\qquad
\nabla\cdot B=0.
\label{DMHD}
\end{align}
\end{subequations}
Here $u,B:\mathbb R_+\times\mathbb T^2\to\mathbb R^2$ denote the
velocity and magnetic fields, respectively, and $\nu>0$ and $\eta>0$
are the viscosity and magnetic diffusivity. The velocity equation retains
only horizontal viscosity, whereas the magnetic equation contains the
full Laplacian.

\medskip

\noindent
For \eqref{MHD-deterministic}, we establish  global
regularity in two dimensions for any initial data. The argument is carried out directly in anisotropic Sobolev spaces.
At the $L^2$ level, the dissipative terms provide control of
$\partial_1u$ and $\nabla B$, and at the $H^{0,1}$ level they provide
control of $\partial_1\partial_2u$ and $\nabla\partial_2B$. In
particular, the full magnetic diffusion supplies the vertical magnetic
regularity that is missing in the horizontal-horizontal system and
allows the coupled anisotropic estimate to close without a smallness
condition on the initial data.

\medskip

\noindent
Stochastic MHD equations have been studied under several forms of random
forcing. Sritharan and Sundar \cite{SritharanSundar1999} considered the
martingale problem associated with stochastic MHD equations. Barbu and
Da Prato \cite{BarbuDaPrato2007} studied two-dimensional MHD equations
with additive Gaussian noise and proved the existence and uniqueness of
an invariant measure. Chueshov and Millet \cite{ChueshovMillet2010}
considered stochastic hydrodynamical systems including two-dimensional
MHD equations with multiplicative Gaussian noise, obtaining results on
existence, uniqueness, and large deviations.  Yamazaki
\cite{yamazaki2016global} established the existence of a global
martingale solution for the stochastic nonhomogeneous MHD system and,
more recently, studied the two-dimensional MHD system driven by
space-time white noise \cite{yamazaki2026remarks}.

\medskip

\noindent
Stochastic MHD equations with fractional or partial dissipation have also
been investigated. Huang and Shen \cite{HuangShen2016} studied a
two-dimensional stochastic fractional anisotropic MHD system with
Gaussian multiplicative noise. Hong, Li and Liu
\cite{HongLiLiu2021} considered stochastic MHD equations with fractional
dissipation and established the existence of martingale solutions,
together with pathwise uniqueness and strong solutions under stronger
conditions on the fractional exponents. Li, Liu and Tang
\cite{LiLiuTang2021} studied stochastic MHD equations with fractional
dissipation in the velocity equation and componentwise partial magnetic
diffusion. They obtained local pathwise existence and uniqueness for
general multiplicative noise and global results for particular linear
multiplicative noises.

\medskip

\noindent
Motivated by these results and by the deterministic system
\eqref{MHD-deterministic}, the main object of the present paper is the
two-dimensional stochastic anisotropic MHD system
\begin{align}
\begin{cases}
du+
\bigl[
(u\cdot\nabla)u-(B\cdot\nabla)B
-\nu\partial_1^2u+\nabla p
\bigr]\,dt
=
\sigma_1(t,u,B)\,dW_1,
\\[1mm]
dB+
\bigl[
(u\cdot\nabla)B-(B\cdot\nabla)u
-\eta\Delta B
\bigr]\,dt
=
\sigma_2(t,u,B)\,dW_2,
\\[1mm]
\nabla\cdot u=0,
\qquad
\nabla\cdot B=0,
\end{cases}
\label{MHD}
\end{align}where $u,B:\mathbb R_+\times\mathbb T^2\to\mathbb R^2$ denote the
velocity and magnetic fields, respectively, and $\nu>0$ and $\eta>0$
are the viscosity and magnetic diffusivity. In addition, $W_1$ and $W_2$ are independent cylindrical
Wiener processes and $\sigma_1$ and $\sigma_2$ are multiplicative noise.

\medskip

\noindent
For \eqref{MHD}, we establish the existence of a global martingale
solution and pathwise uniqueness. The analysis is based on estimates at
the $L^2$ and $H^{0,1}$ levels that preserve the one-directional
viscosity in the velocity equation and exploit the full magnetic
diffusion in the coupled nonlinear estimates. In particular, no vertical
viscosity is imposed on the velocity equation.

\medskip

\noindent \textbf{The remainder of the paper is organized as follows.} We first introduce
the anisotropic functional setting, formulate the assumptions, and state
the main results. We then establish stochastic a priori estimates and prove
the existence of global martingale solutions. Next, using the martingale
representation theorem, we prove the existence of global probabilistically
weak solutions. Finally, we prove pathwise uniqueness.

\medskip

\noindent We begin by fixing the functional setting and notation needed for the subsequent analysis.

\section{Preliminaries and notation}

\noindent Throughout this paper, we work on the two-dimensional periodic torus
$
\mathbb T^2
=
\mathbb R/2\pi\mathbb Z
\times
\mathbb R/2\pi\mathbb Z
=
(\mathbb T_h,\mathbb T_v),
$
where $h$ denotes the horizontal variable $x_1$ and $v$ denotes
the vertical variable $x_2$. We denote by
$$
H
:=
\left\{
u\in L^2:
\nabla\cdot u=0
\right\},
$$
endowed with the $L^2$-inner product and norm. For $s\geq0$, let $H^s$ denote the space of divergence-free
vector fields on $\mathbb T^2$ satisfying
$$
\lVert u\rVert_{H^s}^2
:=
\sum_{k\in\mathbb Z^2}
(1+\lvert k \rvert^2)^s
\lvert \widehat u_k\rvert^2
<\infty,
$$
where $\widehat u_k$ denotes the $k$-th Fourier coefficient of $u$. Due to the anisotropic structure of the system, for $s,s'\geq0$,
we denote the anisotropic Sobolev space of divergence-free vector fields
on $\mathbb T^2$ by $H^{s,s'}$, defined by
$$
\lVert u\rVert_{H^{s,s'}}^2
:=
\sum_{k\in\mathbb Z^2}
(1+\lvert k_1\rvert^2)^s
(1+\lvert k_2\rvert^2)^{s'}
\lvert\widehat u_k\rvert^2,
\qquad
k=(k_1,k_2).
$$
For $p,q\in[1,\infty]$, we denote by
$L_h^p(L_v^q)$ the mixed Lebesgue space equipped with the norm
$$
\lVert u\rVert_{L_h^p(L_v^q)}
:=
\left(
\int_{\mathbb T_h}
\left(
\int_{\mathbb T_v}
\lvert u(x_1,x_2)\rvert^q\,dx_2
\right)^{p/q}
dx_1
\right)^{1/p}.
$$
Similarly,
$$
\lVert u\rVert_{L_v^q(L_h^p)}
:=
\left(
\int_{\mathbb T_v}
\left(
\int_{\mathbb T_h}
\lvert u(x_1,x_2)\rvert^p\,dx_1
\right)^{q/p}
dx_2
\right)^{1/q}.
$$
We denote by $H^{-1}$ the dual space of $H^1$. Its norm is defined by
$$
\lvert f\rVert_{H^{-1}}
:=
\sup_{\substack{\phi\in H^1\\ \phi\neq0}}
\frac{
\lVert
\langle f,\phi\rangle_{H^{-1},H^1}
\rvert
}{
\lVert \phi\rVert_{H^1}
}
=
\sup_{\substack{\phi\in H^1\\ \lvert \phi\rVert_{H^1}\leq1}}
\lvert
\langle f,\phi\rangle_{H^{-1},H^1}
\rvert.
$$
Let $\mathfrak U$ and $X$ be separable Hilbert spaces. We denote
by $L_2(\mathfrak U;X)$ the space of Hilbert-Schmidt operators
from $\mathfrak U$ into $X$, equipped with the norm
$$
\lVert G\rVert_{L_2(\mathfrak U;X)}^2
:=
\sum_{j=1}^{\infty}
\lVert G\psi_j\rVert_X^2,
$$
where $\{\psi_j\}_{j\geq1}$ is any orthonormal basis of
$\mathfrak U$. Let's recall Hilbert-Schmidt inequality,
\begin{align}
\lVert G^*v\rVert_{\mathfrak U}^2
\le
\lVert G\rVert_{L_2(\mathfrak U;L^2)}^2
\lVert v\rVert_{L^2}^2.
\label{LKH8}
\end{align}  In addition, we use
$$
(u,v) 
:=
(u,v)_{L^2}
=
\int_{\mathbb T^2}
u(x)\cdot v(x)\,dx,
\qquad (u,v)_{H^{0,1}}
:=
(u,v)
+
(\partial_2u,\partial_2v),$$
to denote  the inner products in $L^2$ and $H^{0,1}$ respectively.
\medskip

\noindent 
Let $
\mathbf P:L^2\longrightarrow H$
denote the Leray projection onto the space of divergence-free vector
fields. In particular, for every $f\in L^2$ and $v\in H$,
$
(\mathbf P f,v)=(f,v),
$
and
$
\mathbf P\nabla p=0.$
\noindent We now state the assumptions imposed on $\sigma_i, i=1,2 $
that will be used in the existence and pathwise uniqueness.
\begin{hypothesis}
\label{HYp1}
Let
$$
\sigma_1:
\left(
\mathbb R_+\times
H^{1,1}\times H^{1,1},
\mathcal B\bigl(
\mathbb R_+\times
H^{1,1}\times H^{1,1}
\bigr)
\right)
\longrightarrow
\left(
L_2(\mathfrak U_1;H^{1,1}),
\mathcal B\bigl(
L_2(\mathfrak U_1;H^{1,1})
\bigr)
\right)
$$
and
$$
\sigma_2:
\left(
\mathbb R_+\times
H^{1,1}\times H^{1,1},
\mathcal B\bigl(
\mathbb R_+\times
H^{1,1}\times H^{1,1}
\bigr)
\right)
\longrightarrow
\left(
L_2(\mathfrak U_2;H^{1,1}),
\mathcal B\bigl(
L_2(\mathfrak U_2;H^{1,1})
\bigr)
\right)
$$
are measurable mappings such that

\medskip

\noindent
\textbf{(I) Growth assumptions for existence.}

There exist nonnegative constants
$
K_i',\,K_i,\,\widetilde K_i,
\;
i=0,1,2,
$
such that for every
$
t\ge0
$
and every
$
(u,B)\in H^{1,1}\times H^{1,1},
$
the following estimates hold.

\medskip

\noindent
\textbf{(a) $H^{-1}$-growth}
\begin{align}
&
\lVert \sigma_1(t,u,B)\rVert_{L_2(\mathfrak U_1;H^{-1})}^2
+
\lVert \sigma_2(t,u,B)\rVert_{L_2(\mathfrak U_2;H^{-1})}^2
\le
K_0'
+
K_1'
\left(
\lVert u\rVert_{L^2}^2
+
\lVert B\rVert_{L^2}^2
\right).
\label{NH1}
\end{align}

\medskip
\noindent
\textbf{(b) $H^{-1}$-Lipschitz continuity.}
There exists a constant $L_0\geq0$ such that, for every
$t\geq0$ and every
$
(u_1,B_1),(u_2,B_2)\in H^{1,1}\times H^{1,1},
$ $\sigma_i, i=1,2$
\begin{align}
&
\left\lVert 
\sigma_1(t,u_1,B_1)
-
\sigma_1(t,u_2,B_2)
\right\rVert_{L_2(\mathfrak U_1;H^{-1})}^2
\nonumber\\
&\qquad
+
\left\lVert 
\sigma_2(t,u_1,B_1)
-
\sigma_2(t,u_2,B_2)
\right\rVert_{L_2(\mathfrak U_2;H^{-1})}^2
\nonumber\\
&\qquad\leq
L_0
\left(
\lVert u_1-u_2\rVert_{L^2}^2
+
\lVert B_1-B_2\rVert_{L^2}^2
\right).
\label{NlH}
\end{align}
\medskip
\noindent
\textbf{(c) $L^2$-growth}
\begin{align}
\lVert \sigma_1(t,u,B)\rVert_{L_2(\mathfrak U_1;L^2)}^2
+
\lVert \sigma_2(t,u,B)\rVert_{L_2(\mathfrak U_2;L^2)}^2
&\le
K_0
+
K_1
\left(
\lVert u\rVert_{L^2}^2
+
\lVert B\rVert_{L^2}^2
\right)
\nonumber\\
&
+
K_2
\left(
\nu\lVert \partial_1u\rVert_{L^2}^2
+
\eta\lVert \nabla B\rVert_{L^2}^2
\right).
\label{nL2}
\end{align}

\medskip

\noindent
\textbf{(d) $H^{0,1}$-growth}
\begin{align}
\lVert \sigma_1(t,u,B)\rVert_{L_2(\mathfrak U_1;H^{0,1})}^2
+
\lVert \sigma_2(t,u,B)\rVert_{L_2(\mathfrak U_2;H^{0,1})}^2
&\le
\widetilde K_0
+
\widetilde K_1
\left(
\lVert u\rVert_{H^{0,1}}^2
+
\lVert B\rVert_{H^{0,1}}^2
\right)
\nonumber\\
&+
\widetilde K_2
\left(
\nu\lVert \partial_1\partial_2u\rVert_{L^2}^2
+
\eta\lVert \nabla\partial_2B\rVert_{L^2}^2
\right).
\label{nH01}
\end{align}
\noindent \textbf{(II) Lipschitz assumptions for pathwise uniqueness}
Assume there exist constants $L_1,L_2\ge0$ such that, for every
$
(u_1,B_1),(u_2,B_2)\in H^{1,0}\times H^1,
$
the diffusion coefficients satisfy
\begin{align}
\lVert \sigma_1(t,u_1,B_1)-\sigma_1(t,u_2,B_2)\rVert_
{L_2(\mathfrak U_1;L^2)}^2
&+
\lVert \sigma_2(t,u_1,B_1)-\sigma_2(t,u_2,B_2)\rVert_
{L_2(\mathfrak U_2;L^2)}^2
\nonumber\\
&\qquad
\le
L_1
\left(
\lVert u_1-u_2\rVert_{L^2}^2
+
\lVert B_1-B_2\rVert_{L^2}^2
\right)
\nonumber\\
&\qquad\quad
+
L_2
\left(
\nu\lVert \partial_1(u_1-u_2)\rVert_{L^2}^2
+
\eta\lVert \nabla(B_1-B_2)\rVert_{L^2}^2
\right).
\label{LIp}
\end{align}
\end{hypothesis}
\noindent 
Define
\begin{align}
\Sigma_i(t)
:=
\sigma_i\!\left(t,u^{(1)}(t),B^{(1)}(t)\right)
-
\sigma_i\!\left(t,u^{(2)}(t),B^{(2)}(t)\right),
\qquad
i=1,2.
\end{align}
then, applying \eqref{LIp} with
$$
(u_1,B_1)
=
(u^{(1)}(t),B^{(1)}(t)),
\qquad
(u_2,B_2)
=
(u^{(2)}(t),B^{(2)}(t)),
$$
and $U=u_1-u_2$ and $V=B_1-B_2$,
we obtain
\begin{align}
&
\lVert \Sigma_1(t)\rVert_{L_2(\mathfrak U_1;L^2)}^2
+
\lVert \Sigma_2(t)\rVert_{L_2(\mathfrak U_2;L^2)}^2
\nonumber\\
&\qquad
\le
L_1
\left(
\lVert U(t)\rVert_{L^2}^2
+
\lVert V(t)\rVert_{L^2}^2
\right)
+
L_2
\left(
\nu\lVert \partial_1U(t)\rVert_{L^2}^2
+
\eta\lVert \nabla V(t)\rVert_{L^2}^2
\right).
\label{LSI}
\end{align}\medskip
\noindent With the functional setting and the assumptions on the noise coefficients
specified, we now turn to the  stochastic setting and notions of solution leads to construction of approximate solutions by
the Galerkin method.
\subsection{Stochastic setting and notions of solution}
\label{5GHT}

\noindent The Galerkin approximation described the following section \ref{sec 2.2} is being constructed using the standard
divergence-free Fourier basis on the periodic torus
$
\mathbb T^2=(\mathbb R/2\pi\mathbb Z)^2.
$ Let
$$
\mathbb Z^2\setminus\{(0,0)\}
=
\mathbb Z_+^2
\cup
\mathbb Z_-^2,
$$
where
$$
\mathbb Z_+^2
:=
\{
(k_1,k_2)\in\mathbb Z^2:
k_2>0
\}
\cup
\{
(k_1,0)\in\mathbb Z^2:
k_1>0
\},
\qquad
\mathbb Z_-^2
:=
\{
k\in\mathbb Z^2:
-k\in\mathbb Z_+^2
\}.
$$
For
$
k=(k_1,k_2)\in\mathbb Z^2\setminus\{(0,0)\},
$
define
$
k^\perp
:=
(-k_2,k_1),
$
so that
$
k\cdot k^\perp=0.
$ For each
$
k\in\mathbb Z^2\setminus\{(0,0)\},
$
define
$$
e_k(x)
=
\begin{cases}
\dfrac{1}{\sqrt{2}\,\pi}\,
\dfrac{k^\perp}{\lvert k\rvert}
\sin(k\cdot x),
&
k\in\mathbb Z_+^2,
\\[2ex]
\dfrac{1}{\sqrt{2}\,\pi}\,
\dfrac{k^\perp}{\lvert k\rvert}
\cos(k\cdot x),
&
k\in\mathbb Z_-^2.
\end{cases}
$$
where
$
k\cdot x
=
k_1x_1+k_2x_2.
$ Since
$
k^\perp\cdot k=0,
$
we obtain
$
\nabla\cdot e_k
=
0.
$
Moreover,
$
\int_{\mathbb T^2}
e_k(x)\,dx
=
0,
$
so that
$
e_k\in H.
$
Since $H$ does not impose a zero-mean condition, the normalized constant
divergence-free vector fields
$
\frac{1}{2\pi}(1,0),
\;
\frac{1}{2\pi}(0,1),
$
are also included.  Indeed, for
$
k\neq l,
$
we have
$
(e_k,e_l)=0,
$
$
(\partial_1e_k,\partial_1e_l)=0,
$
and
$
(\partial_2e_k,\partial_2e_l)=0.
$
Consequently,
for every
$
k\neq l
$
and
$
i=1,2,
$
integration by parts gives
$$
(\partial_i^2e_k,e_l)
=
-
(\partial_ie_k,\partial_ie_l)
=
0,
$$
which shows that
$
\partial_i^2e_k
$
is orthogonal to every
$
e_l,
\,
l\neq k.
$
Since
$
\{e_k\}_{k\ge1}
$
is an orthonormal basis of $H$, it follows that
$
\partial_i^2e_k
=
\lambda_{i,k}e_k
$
for some real constant
$
\lambda_{i,k}.
$ For the Fourier basis these eigenvalues are explicitly given by
$$
\lambda_{1,k}
=
-k_1^2,
\qquad
\lambda_{2,k},
=
-k_2^2$$
so that
$$
\partial_1^2e_k
=
-k_1^2e_k, \qquad
\partial_2^2e_k
=
-k_2^2e_k,
$$
 therefore
$$
\Delta e_k
=
-(k_1^2+k_2^2)e_k
=
-\lvert k\rvert^2e_k,
$$
so, each Fourier basis function is an eigenfunction of both
$\partial_1^2$
and
$\Delta$.
Consequently, if
$v\in H_n$,
then
$\partial_1^2v\in H_n$
and
$\Delta v\in H_n$.
 Denote by
$
P_n:H\longrightarrow H_n,
\,
P_n^{0,1}:H^{0,1}\longrightarrow H_n,
\,
P_n^{1,1}:H^{1,1}\longrightarrow H_n
$
the orthogonal projections with respect to the inner products of
$H$, $H^{0,1}$, and $H^{1,1}$, respectively. We first observe that
$$
P_nu=P_n^{0,1}u,
\qquad
u\in H^{0,1}.
$$
Indeed, let $u\in H^{0,1}$ and $v\in H_n$. Since
$
\partial_2^2v\in H_n,
$
the definition of the $H$-orthogonal projection gives
$
(P_nu,v)
=
(u,v).
$
Moreover, using integration by parts,
\begin{align*}
(\partial_2P_nu,\partial_2v)=
(\partial_2u,\partial_2v),
\end{align*}
which consequently,
\begin{align*}
(P_nu,v)_{H^{0,1}}
&=
(P_nu,v)
+
(\partial_2P_nu,\partial_2v)_{L^2}=
(u,v)_{H^{0,1}},
\qquad
v\in H_n,
\end{align*}
thus $P_nu$ is also the $H^{0,1}$-orthogonal projection
of $u$ onto $H_n$, and therefore
$$
P_nu=P_n^{0,1}u.
$$
Similarly, using
$
\partial_1^2v,\,
\partial_2^2v,\,
\partial_1^2\partial_2^2v
\in H_n,
$
one proves that
$$
P_nu
=
P_n^{0,1}u
=
P_n^{1,1}u,
\qquad
u\in H^{1,1}.
$$
Finally, we extend $P_n$ to $H^{-1}$ by duality. For
$f\in H^{-1}$, define
$$
P_nf
:=
\sum_{k=1}^n
\langle f,e_k\rangle_{H^{-1},H^1}\,e_k,
$$
where
$\langle\cdot,\cdot\rangle_{H^{-1},H^1}$
denotes the duality pairing between $H^{-1}$ and $H^1$.
\medskip

\noindent Let
$
(\Omega,\mathcal F,\mathbb P)
$
be a probability space, and let
$
W_1=(W_1(t))_{t\ge0},
\;
W_2=(W_2(t))_{t\ge0},
$
be two \emph{independent} cylindrical Wiener processes on the separable
Hilbert spaces $\mathfrak U_1$ and $\mathfrak U_2$, respectively,
defined on $(\Omega,\mathcal F,\mathbb P)$.
\medskip

\noindent Let
$
\{\psi_j^{(1)}\}_{j\ge1},
\;
\{\psi_j^{(2)}\}_{j\ge1},
$
be orthonormal bases of $\mathfrak U_1$ and $\mathfrak U_2$,
respectively. For each
$
i=1,2
$
and
$
n\geq1,
$
let
$
\Pi_n^{(i)}:\mathfrak U_i\to\mathfrak U_i
$
denote the orthogonal projection onto
$
\operatorname{span}
\{\psi_1^{(i)},\ldots,\psi_n^{(i)}\}.
$
It is pretty straightforward
\begin{align}
\Pi_n^{(i)}z
\longrightarrow
z
\qquad
\text{strongly in }\mathfrak U_i
\quad\text{for every }z\in\mathfrak U_i,
\qquad i=1,2.
\label{dsa}
\end{align}
\begin{align}
\lVert \Pi_n^{(i)}z\rVert_{\mathfrak U_i}
\leq
\lVert z\rVert_{\mathfrak U_i},
\qquad
\lVert (I-\Pi_n^{(i)})z\rVert_{\mathfrak U_i}
\leq
\lVert z\rVert_{\mathfrak U_i},
\qquad
z\in\mathfrak U_i.
\label{asd1}
\end{align}
With respect to these orthonormal bases, the cylindrical Wiener
processes $W_1$ and $W_2$ admit the representations
$$
W_i(t)
=
\sum_{j=1}^{\infty}
\psi_j^{(i)}\beta_j^{(i)}(t),
\qquad
i=1,2,
$$
where
$
\{\beta_j^{(i)}\}_{j\ge1},
\; i=1,2,
$
are standard Brownian motions on
$(\Omega,\mathcal F,\mathbb P)$, and the two families
$
\{\beta_j^{(1)}\}_{j\ge1}
$
and
$
\{\beta_j^{(2)}\}_{j\ge1}
$
are independent. For
$
i=1,2
$
and
$
n\ge1,
$
define
$$
W_i^n(t)
:=
\Pi_n^{(i)}W_i(t)
=
\sum_{j=1}^{n}
\psi_j^{(i)}\beta_j^{(i)}(t).
$$
For a Polish space
$
\mathbb V,
$
let
$
\mathcal B(\mathbb V)
$
denote its Borel $\sigma$-algebra, and let
$
\mathcal P(\mathbb V)
$
denote the set of all probability measures on
$
(\mathbb V,\mathcal B(\mathbb V)).
$

\noindent
For notational convenience, we introduce the deterministic operators
$
F_1,F_2:H^{1,1}\times H^{1,1}\rightarrow H^{-1}
$
defined by
\begin{align}
F_1(u,B)
&=
-(u\cdot\nabla)u
+
(B\cdot\nabla)B
+
\nu\partial_1^2u,
\label{def-F1}
\\
F_2(u,B)
&=
-(u\cdot\nabla)B
+
(B\cdot\nabla)u
+
\eta\Delta B.
\label{def-F2}
\end{align}
We next define a probabilistically weak solution.
\begin{definition}[Probabilistically weak solution]
\label{d-w-s}
We say that the pair
$
\bigl((u,B),(W_1,W_2)\bigr)
$
is a \emph{(probabilistically) weak solution} of the stochastic
anisotropic MHD system if there exists a stochastic basis
$
(\Omega,\mathcal F,\{\mathcal F_t\}_{t\ge0},\mathbb P)
$
satisfying the usual conditions such that
$
u=(u(t))_{t\ge0},
\;
B=(B(t))_{t\ge0},
$
are $\{\mathcal F_t\}_{t\ge0}$-progressively measurable processes,
$W_1$ and $W_2$ are independent cylindrical Wiener processes on
$\mathfrak U_1$ and $\mathfrak U_2$, respectively, defined on
$
(\Omega,\mathcal F,\{\mathcal F_t\}_{t\ge0},\mathbb P),
$
and the following conditions hold:

\begin{enumerate}

\item[(i)]
We have
\begin{align*}
u
&\in
L^\infty_{\mathrm{loc}}
\bigl(\mathbb R_+; H^{0,1}\bigr)
\cap
L^2_{\mathrm{loc}}
\bigl(\mathbb R_+; H^{1,1}\bigr)
\cap
C\bigl(\mathbb R_+;H^{-1}\bigr),
\nonumber
\\
B
&\in
L^\infty_{\mathrm{loc}}
\bigl(\mathbb R_+; H^{0,1}\bigr)
\cap
L^2_{\mathrm{loc}}
\bigl(\mathbb R_+; H^{1}\bigr)
\cap
L^2_{\mathrm{loc}}
\bigl(\mathbb R_+; H^{1,1}\bigr)
\cap
L^2_{\mathrm{loc}}
\bigl(\mathbb R_+; H^{0,2}\bigr)
\cap
C\bigl(\mathbb R_+;H^{-1}\bigr),
\end{align*}
$\mathbb P$-a.s..

\item[(ii)]
For every $T>0$,
\begin{align*}
&\int_0^T
\left\lVert 
F_1\bigl(u(s),B(s)\bigr)
\right\rVert_{H^{-1}}\,ds
+
\int_0^T
\left\lVert 
F_2\bigl(u(s),B(s)\bigr)
\right\rVert_{H^{-1}}\,ds
+
\int_0^T
\left\lVert 
\sigma_1\bigl(s,u(s),B(s)\bigr)
\right\rVert_{L_2(\mathfrak U_1;H)}^2\,ds
\nonumber\\
&\quad
+
\int_0^T
\left\lVert 
\sigma_2\bigl(s,u(s),B(s)\bigr)
\right\rVert_{L_2(\mathfrak U_2;H)}^2\,ds
<\infty,
\qquad
\mathbb P\text{-a.s.},
\end{align*}
where
\begin{align*}
F_1(u,B)
&=
\nu\partial_1^2u
-
(u\cdot\nabla)u
+
(B\cdot\nabla)B,
\nonumber\\
F_2(u,B)
&=
\eta\Delta B
-
(u\cdot\nabla)B
+
(B\cdot\nabla)u.
\end{align*}

\item[(iii)]
For every
$
\phi,\psi
\in
C^\infty
$
satisfying
$
\nabla\cdot\phi=0,
\quad
\nabla\cdot\psi=0,
$
we have, $\mathbb P$-a.s.,
$
u(0)=u_0,
\,
B(0)=B_0,$
and, for every $t\ge0$,
\begin{align}
\langle u(t),\phi\rangle
&=
\langle u_0,\phi\rangle
+
\int_0^t
\left\langle
F_1\bigl(u(s),B(s)\bigr),
\phi
\right\rangle\,ds
+
\int_0^t
\langle
\sigma_1\bigl(s,u(s),B(s)\bigr)\,dW_1(s),
\phi
\rangle,
\label{PrW1}
\\
\langle B(t),\psi\rangle
&=
\langle B_0,\psi\rangle
+
\int_0^t
\langle
F_2\bigl(u(s),B(s)\bigr),
\psi
\rangle\,ds
+
\int_0^t
\langle
\sigma_2\bigl(s,u(s),B(s)\bigr)\,dW_2(s),
\psi
\rangle.
\label{PrW2}
\end{align}
\end{enumerate}
Here
$\langle\cdot,\cdot\rangle$
denotes the duality pairing between $H^{-1}$ and $H^1$;
when $u,v\in H$,
$
\langle u,v\rangle=(u,v).
$
\end{definition}
\noindent We next introduce the filtration used in the martingale
formulation. On the path space $\bar\Omega$, for $t\ge0$,
define
\begin{align}
\mathcal F_t
:=
\sigma\left(
u(s),B(s):0\leq s\leq t
\right).
\label{MF1}
\end{align}
\begin{definition}
\label{DEF}[(Global) martingale solution]
A probability measure
$
\mathbb P\in\mathcal P(\bar\Omega)
$
is called a \emph{global martingale solution}
of the stochastic anisotropic MHD system with initial data
$
(u_0,B_0),
$
if

\begin{enumerate}

\item[(M1)]
$
\mathbb P
\bigl(
(u(0),B(0))
=
(u_0,B_0)
\bigr)
=
1,
$
and, for every $T>0$,
$$
\mathbb P
\left(
\sup_{0\le t\le T}
\lVert u(t)\rVert_{H^{0,1}}
<\infty,
\;
\int_0^T
\lVert u(t)\rVert_{H^{1,1}}^2\,dt
<\infty
\right)
=
1,
$$
and
$$
\mathbb P
\left(
\sup_{0\le t\le T}
\lVert B(t)\rVert_{H^{0,1}}
<\infty,
\;
\int_0^T
\lVert B(t)\rVert_{H^{0,2}}^2\,dt
<\infty,
\;
\int_0^T
\lVert B(t)\rVert_{H^{1,1}}^2\,dt
<\infty
\right)
=
1,
$$
together with
$$
\mathbb P
\left(
\int_0^T
\Big(
\lVert F_1(u,B)\rVert_{H^{-1}}
+
\lVert F_2(u,B)\rVert_{H^{-1}}
\Big)\,dt
<\infty
\right)
=
1,
$$
and
$$
\mathbb P
\left(
\int_0^T
\left(
\lVert \sigma_1(t,u,B)\rVert_{L_2(\mathfrak U_1;H)}^2
+
\lVert \sigma_2(t,u,B)\rVert_{L_2(\mathfrak U_2;H)}^2
\right)\,dt
<\infty
\right)
=
1.
$$

\item[(M2)]
For every divergence-free
$
\phi,\psi\in C^\infty,
$
the processes
$$
M_{u}^{\phi}(t)
=
\langle u(t),\phi\rangle
-
\langle u_0,\phi\rangle
-
\int_0^t
\langle F_1(u(s),B(s)),\phi\rangle\,ds,
$$
and
$$
M_B^{\psi}(t)
=
\langle B(t),\psi\rangle
-
\langle B_0,\psi\rangle
-
\int_0^t
\langle F_2(u(s),B(s)),\psi\rangle\,ds,
$$
are continuous square-integrable
$(\mathcal F_t)$-martingales
whose quadratic variation processes are
\begin{align}
\left\langle\!\left\langle
M_u^\phi
\right\rangle\!\right\rangle_t
&=
\int_0^t
\left\lVert 
\sigma_1^*(s,u(s),B(s))(\phi)
\right\rVert_{\mathfrak U_1}^2\,ds,
\label{M2DV}
\\
\left\langle\!\left\langle
M_B^\psi
\right\rangle\!\right\rangle_t
&=
\int_0^t
\left\lVert 
\sigma_2^*(s,u(s),B(s))(\psi)
\right\rVert_{\mathfrak U_2}^2\,ds,
\label{M2DB}
\end{align}
respectively. Moreover, their cross-variation vanishes:
\begin{align}
\left\langle\!\left\langle
M_u^\phi,M_B^\psi
\right\rangle\!\right\rangle_t
=
0,
\qquad
t\geq0.
\label{M2qb}
\end{align}

\item[(M3)]
For every $T>0$,
$$
\mathbb E
\left[
\sup_{0\le t\le T}
\left(
\lVert u(t)\rVert_{L^2}^2
+
\lVert B(t)\rVert_{L^2}^2
\right)
+
\int_0^T
\left(
\nu\lVert \partial_1u(t)\rVert_{L^2}^2
+
\eta\lVert \nabla B(t)\rVert_{L^2}^2
\right)\,dt
\right]
<\infty.
$$
\end{enumerate}
\end{definition}
We next introduce the Galerkin approximation used in the construction of a martingale solution.
\subsection{Galerkin approximation}
\label{sec 2.2}

\noindent For the Galerkin construction, let
$
\{\mathcal F_t\}_{t\geq0}
$
be an increasing, complete, and right-continuous filtration on
$
(\Omega,\mathcal F,\mathbb P)
$
such that
$
W_1
$
and
$
W_2
$
are independent cylindrical Wiener processes with respect to
$
\{\mathcal F_t\}_{t\geq0}.
$ Let
$
H_n=\operatorname{span}\{e_1,\ldots,e_n\},
$
and let
$
P_n:H\to H_n
$
denote the orthogonal projection onto $H_n$. Since $H_n$ is finite
dimensional, every $H_n$-valued process admits a unique expansion with
respect to the basis
$
\{e_1,\ldots,e_n\}.
$
We therefore seek approximate solutions
$
u^n
$
and
$
B^n
$
of the form
\begin{align}
u^n(t)
&=
\sum_{k=1}^{n}
a_k^n(t)e_k,
\label{GAU}
\\
B^n(t)
&=
\sum_{k=1}^{n}
b_k^n(t)e_k,
\label{GAB}
\end{align}
where
$
a_k^n
$
and
$
b_k^n
$
are real-valued
$
\{\mathcal F_t\}_{t\geq0}
$-adapted processes.
\medskip

\noindent 
Let
$
\Pi_n^{(i)}:\mathfrak U_i\to\mathfrak U_i
$
denote the orthogonal projection onto the finite-dimensional subspace
generated by the first $n$ elements of a fixed orthonormal basis of
$
\mathfrak U_i,
$
and set
$
W_i^n=\Pi_n^{(i)}W_i.
$
Then
$
dW_i^n=\Pi_n^{(i)}\,dW_i,
$
for $i=1,2$. The Galerkin approximation of the stochastic MHD system \eqref{MHD} is given by
\begin{align}
du^n
&=
\Bigl[
\nu P_n\partial_1^2u^n
-
P_n\bigl((u^n\cdot\nabla)u^n\bigr)
+
P_n\bigl((B^n\cdot\nabla)B^n\bigr)
\Bigr]\,dt
+
P_n\sigma_1(t,u^n,B^n)\Pi_n^{(1)}\,dW_1,
\label{gVT}
\\
dB^n
&=
\Bigl[
\eta P_n\Delta B^n
-
P_n\bigl((u^n\cdot\nabla)B^n\bigr)
+
P_n\bigl((B^n\cdot\nabla)u^n\bigr)
\Bigr]\,dt
+
P_n\sigma_2(t,u^n,B^n)\Pi_n^{(2)}\,dW_2,
\label{GVL}
\end{align}
with initial conditions
\begin{align}
u^n(0)
=
P_nu_0,
\qquad
B^n(0)
=
P_nB_0.
\label{GIL}
\end{align}
It follows that
\begin{align}
P_nF_1(u^n,B^n)
&=
\nu P_n\partial_1^2u^n
-
P_n\bigl((u^n\cdot\nabla)u^n\bigr)
+
P_n\bigl((B^n\cdot\nabla)B^n\bigr),
\label{KL4}
\\
P_nF_2(u^n,B^n)
&=
\eta P_n\Delta B^n
-
P_n\bigl((u^n\cdot\nabla)B^n\bigr)
+
P_n\bigl((B^n\cdot\nabla)u^n\bigr).
\label{P2K}
\end{align}
Therefore,
\eqref{gVT}-\eqref{GVL}
may equivalently be written as
\begin{align}
du^n
&=
P_nF_1(u^n,B^n)\,dt
+
P_n\sigma_1(t,u^n,B^n)\Pi_n^{(1)}\,dW_1,
\label{HGS}
\\
dB^n
&=
P_nF_2(u^n,B^n)\,dt
+
P_n\sigma_2(t,u^n,B^n)\Pi_n^{(2)}\,dW_2.
\label{qwe}
\end{align}
The Galerkin solution
$
(u^n,B^n)
$
is an
$
\{\mathcal F_t\}_{t\geq0}
$-adapted
$
H_n\times H_n
$-valued process with continuous sample paths. Consequently,
$
u^n
$
and
$
B^n
$
are predictable processes. Since
$
\sigma_1
$
and
$
\sigma_2
$
are measurable mappings by Hypothesis~\ref{HYp1}, it follows that
$
\sigma_i\bigl(t,u^n(t),B^n(t)\bigr),
$
$i=1,2$, are predictable
$
L_2(\mathfrak U_i;H^{1,1})
$-valued processes.
\medskip

\noindent In addition, corresponding to the filtration $\{\mathcal F\}_{t\geq 0}$ defined in \eqref{MF1}, for $t\geq0$, we define the  filtration generated by the
Galerkin processes by
\begin{align}
\mathcal F_t^n
:=
\sigma\left(
u^n(s),B^n(s):0\leq s\leq t
\right).
\label{FI}
\end{align}
Since $u^n$ and $B^n$ are
$\{\mathcal F_t\}_{t\geq0}$-adapted, it follows from \eqref{MF1} and \eqref{FI} that
\begin{align}
\mathcal F_t^n
\subseteq
\mathcal F_t,
\;
t\geq0.
\label{lka0}
\end{align}
\noindent
We first derive the following a priori estimates, which will also be used in the stochastic analysis.
\begin{lemma}
Let $(u_0,B_0)\in H^{0,1}\times H^{0,1}$. Then the solution
$(u,B)$ to \eqref{DMHDu}-\eqref{DMHD} satisfies
\begin{align}
&
\lVert \partial_2u(t)\rVert_{L^2}^2
+
\lVert \partial_2B(t)\rVert_{L^2}^2
+
\int_0^t
\Big(
\nu\lVert \partial_1\partial_2u(s)\rVert_{L^2}^2
+
\eta\lVert \nabla\partial_2B(s)\rVert_{L^2}^2
\Big)\,ds
\nonumber\\
&\qquad\leq
\exp\left\{
C_{\nu,\eta}
\int_0^t
\Big(
\lVert \partial_1u(s)\rVert_{L^2}
+
\lVert \partial_1u(s)\rVert_{L^2}^2
+
\lVert \nabla B(s)\rVert_{L^2}
+
\lVert \nabla B(s)\rVert_{L^2}^2
\Big)\,ds
\right\}
\nonumber\\
&\qquad\quad\times
\Big(
\lVert \partial_2u_0\rVert_{L^2}^2
+
\lVert \partial_2B_0\rVert_{L^2}^2
\Big).
\label{0.49A}
\end{align}
\end{lemma}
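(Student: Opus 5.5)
The plan is a direct $H^{0,1}$ energy estimate on the vertical derivatives, followed by Gronwall's inequality. I apply $\partial_2$ to \eqref{DMHDu} and \eqref{DMHDB}, take the $L^2$ inner products with $\partial_2u$ and $\partial_2B$ respectively, and add. Since $\partial_2u$ and $\partial_2B$ are divergence free, the pressure drops out. The dissipative terms give
$$
\frac12\frac{d}{dt}\Big(\lVert\partial_2u\rVert_{L^2}^2+\lVert\partial_2B\rVert_{L^2}^2\Big)+\nu\lVert\partial_1\partial_2u\rVert_{L^2}^2+\eta\lVert\nabla\partial_2B\rVert_{L^2}^2 = \sum_j I_j .
$$
When $\partial_2$ falls on the differentiated factor of a nonlinear term, two cancellations occur. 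First, $((u\cdot\nabla)\partial_2u,\partial_2u)=((u\cdot\nabla)\partial_2B,\partial_2B)=0$. Second, the coupling terms cancel,
$$
((B\cdot\nabla)\partial_2B,\partial_2u)+((B\cdot\nabla)\partial_2u,\partial_2B)=0,
$$
by $\nabla\cdot B=0$. What remains are the commutator terms
$$
I_1=-((\partial_2u\cdot\nabla)u,\partial_2u),\quad I_2=((\partial_2B\cdot\nabla)B,\partial_2u),\quad I_3=-((\partial_2u\cdot\nabla)B,\partial_2B),\quad I_4=((\partial_2B\cdot\nabla)u,\partial_2B).
$$

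\textbf{Velocity term.} For $I_1$ I expand $(\partial_2u\cdot\nabla)u=\partial_2u_1\,\partial_1u+\partial_2u_2\,\partial_2u$ and use $\partial_2u_2=-\partial_1u_1$. Then every product in $I_1$ contains a factor $\partial_1u$ and two factors of size $\partial_2u$. I bound these with the anisotropic triple-product inequality
$$
\int|f||g||h|\le C\lVert f\rVert^{1/2}\big(\lVert f\rVert+\lVert\partial_2f\rVert\big)^{1/2}\lVert g\rVert\,\lVert h\rVert^{1/2}\big(\lVert h\rVert+\lVert\partial_1h\rVert\big)^{1/2},
$$
with $f=\partial_1u$, so that $\partial_2f=\partial_1\partial_2u$, and $h=\partial_2u$, so that $\partial_1h=\partial_1\partial_2u$. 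For functions that are derivatives (zero mean in the relevant direction) I use the version of this inequality without the lower-order $\lVert f\rVert$ and $\lVert h\rVert$ pieces; this is what keeps the final bound homogeneous in $\lVert\partial_2u_0\rVert^2+\lVert\partial_2B_0\rVert^2$. Young's inequality then gives
$$
|I_1|\le\frac{\nu}{4}\lVert\partial_1\partial_2u\rVert^2+C\big(\lVert\partial_1u\rVert+\lVert\partial_1u\rVert^2\big)\lVert\partial_2u\rVert^2 .
$$

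\textbf{Magnetic terms.} For $I_2$, $I_3$ and $I_4$ I use the full magnetic diffusion. Ladyzhenskaya's inequality gives $\lVert\partial_2B\rVert_{L^4}^2\le C\lVert\partial_2B\rVert\,\lVert\nabla\partial_2B\rVert$. For $I_4$ this yields
$$
|I_4|\le\lVert\nabla u\rVert\,\lVert\partial_2B\rVert_{L^4}^2 .
$$
Here the $\partial_1u$ part of $\nabla u$ is harmless. For the $\partial_2u$ part I use $\lVert\partial_2B\rVert\le\lVert\nabla B\rVert$, which gives a contribution bounded by $\frac{\eta}{8}\lVert\nabla\partial_2B\rVert^2+C\lVert\nabla B\rVert^2\lVert\partial_2u\rVert^2$. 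In $I_2$ and $I_3$ the difficulty is the factor $\nabla B$ sitting in $L^4$, which would require $\partial_1^2B$, and that is not controlled at this level. I first try to integrate by parts so that the derivative moves onto $\partial_2B$ or $\partial_2u$, using $\nabla\cdot\partial_2B=0$, for instance
$$
I_2=-((\partial_2B\cdot\nabla)\partial_2u,B).
$$
Then either $\partial_1\partial_2u$ or $\nabla\partial_2B$ appears, and the remaining factors are placed in $L^4$ or in mixed $L_h^\infty(L_v^2)$-type spaces via the anisotropic inequality. Each such term should be bounded by $\epsilon\big(\nu\lVert\partial_1\partial_2u\rVert^2+\eta\lVert\nabla\partial_2B\rVert^2\big)+C\big(\lVert\nabla B\rVert+\lVert\nabla B\rVert^2\big)\big(\lVert\partial_2u\rVert^2+\lVert\partial_2B\rVert^2\big)$. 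Making every magnetic term land in exactly this form, with the dissipation absorbed and coefficients only $\lVert\nabla B\rVert$, $\lVert\nabla B\rVert^2$, $\lVert\partial_1u\rVert$, $\lVert\partial_1u\rVert^2$, is the step I expect to be the main obstacle, and I would treat it term by term.

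\textbf{Conclusion.} Absorbing all dissipative pieces into the left-hand side gives a differential inequality for $Y(t)=\lVert\partial_2u\rVert^2+\lVert\partial_2B\rVert^2$ of the form
$$
Y'+\nu\lVert\partial_1\partial_2u\rVert^2+\eta\lVert\nabla\partial_2B\rVert^2\le C_{\nu,\eta}\,g(t)\,Y,
$$
where $g=\lVert\partial_1u\rVert+\lVert\partial_1u\rVert^2+\lVert\nabla B\rVert+\lVert\nabla B\rVert^2$. The basic $L^2$ energy estimate gives $g\in L^1(0,t)$. Gronwall's inequality then yields \eqref{0.49A}.
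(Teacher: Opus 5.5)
There is a genuine gap, and it sits at the step you flag as the main obstacle: you never actually estimate the coupling terms $I_2$ and $I_3$, and the route you suggest for them fails. Integrating by parts as in $I_2=-((\partial_2B\cdot\nabla)\partial_2u,B)$ creates the piece $-\int\partial_2B_2\,\partial_2^2u\cdot B$. The factor $\partial_2^2u$ is not controlled by anything on the left-hand side, because the velocity only has the dissipation $\nu\lVert\partial_1\partial_2u\rVert_{L^2}^2$. So your claim that ``either $\partial_1\partial_2u$ or $\nabla\partial_2B$ appears'' is false for this term.

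Your diagnosis is also off: nothing forces $\nabla B$ out of $L^2$. The paper keeps $\nabla B$ in $L^2$ and splits the other two factors anisotropically,
\[
|I_2|+|I_3|\lesssim \lVert\nabla B\rVert_{L^2}\,\lVert\partial_2u\rVert_{L_h^\infty L_v^2}\,\lVert\partial_2B\rVert_{L_h^2L_v^\infty},
\]
and then applies \eqref{Lia}. By \eqref{Lia}, $\lVert\partial_2u\rVert_{L_h^\infty L_v^2}$ costs only $\partial_1\partial_2u$. Likewise $\lVert\partial_2B\rVert_{L_h^2L_v^\infty}$ costs $\partial_2^2B$, which is available precisely because of the full magnetic diffusion. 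Young's inequality then gives
\[
\frac{\nu}{4}\lVert\partial_1\partial_2u\rVert_{L^2}^2+\frac{\eta}{4}\lVert\partial_2^2B\rVert_{L^2}^2+C_{\nu,\eta}\bigl(\lVert\nabla B\rVert_{L^2}+\lVert\nabla B\rVert_{L^2}^2\bigr)\bigl(\lVert\partial_2u\rVert_{L^2}^2+\lVert\partial_2B\rVert_{L^2}^2\bigr),
\]
with no integration by parts needed.

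Your $I_1$ estimate, as described, also does not close. Suppose you only use that each product has one $\partial_1u$ factor and two $\partial_2u$ factors, and take $f=\partial_1u$, $h=\partial_2u$, $g=\partial_2u$ in your triple-product inequality. The leading term is then $\lVert\partial_1u\rVert^{1/2}\lVert\partial_2u\rVert^{3/2}\lVert\partial_1\partial_2u\rVert$. Young turns this into $C\lVert\partial_1u\rVert\,\lVert\partial_2u\rVert^3$, which is superlinear in $Y$ and breaks the linear Gronwall argument.

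Two further uses of $\nabla\cdot u=0$ are needed, as in the paper's proof.
\begin{itemize}
\item The terms $\pm\int\partial_1u_1(\partial_2u_1)^2$ from your two pieces cancel exactly, since $((\partial_2u\cdot\nabla)u_1,\partial_2u_1)=\int(\partial_2u_1)^2(\partial_1u_1+\partial_2u_2)=0$.
\item The two surviving terms are $-\int\partial_2u_1\,\partial_1u_2\,\partial_2u_2$ and $\int\partial_1u_1(\partial_2u_2)^2$. Each contains the factor $\partial_2u_2=-\partial_1u_1$, which you should place in plain $L^2$ using $\lVert\partial_2u_2\rVert\le\lVert\partial_1u\rVert^{1/2}\lVert\partial_2u\rVert^{1/2}$.
\end{itemize}
This yields $\lVert\partial_1u\rVert\,\lVert\partial_2u\rVert\,\lVert\partial_1\partial_2u\rVert$, and hence the bound $C\lVert\partial_1u\rVert^2\lVert\partial_2u\rVert^2$ that you claimed.

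Your treatment of $I_4$, via Ladyzhenskaya and $\lVert\partial_2B\rVert\le\lVert\nabla B\rVert$, is correct and somewhat cleaner than the paper's. Your concluding Gronwall step is also fine.
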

\begin{proof}
Taking the $L^2$ inner products of \eqref{DMHDu} with $u$ and
\eqref{DMHDB} with $B$, respectively, and adding the resulting identities
gives
\begin{align}
\frac{1}{2}\frac{d}{dt}
\left(
\lVert u\rVert_{L^2}^2
+
\lVert B\rVert_{L^2}^2
\right)
+
\nu\lVert\partial_1u\rVert_{L^2}^2
+
\eta\lVert\nabla B\rVert_{L^2}^2
=0.
\label{2.28H}
\end{align}
 We next apply $\partial_2$ to \eqref{DMHDu} and \eqref{DMHDB},
and then taking the $L^2$-inner products of the resulting equations
with $\partial_2u$ and $\partial_2B$, respectively, to obtain
\begin{align}
\frac{1}{2}\frac{d}{dt}\Big(
\lVert \partial_2u(t)\rVert_{L^2}^2
+
\lVert \partial_2B(t)\rVert_{L^2}^2
\Big)
&+
\nu\lVert \partial_1\partial_2u\rVert_{L^2}^2
+
\eta\lVert \partial_1\partial_2B\rVert_{L^2}^2=I1+I2+I3+I4
\end{align}
It is easy to observe that 
\begin{align}
I1=(\partial_2(u \cdot \nabla u)\rvert \partial_2 u)=(\partial_2(u \cdot \nabla u^1)\rvert \partial_2u^1)+(\partial_2(u \cdot \nabla u^2)\rvert \partial_2u^2),
\label{A1}
\end{align}
where the 1st term on the right hand side of  \eqref{A1} is simplified as follows
\begin{align}
(\partial_2(u \cdot \nabla u^1)\rvert \partial_2u^1)&= (\partial_2 (u^1 \partial_1 u^1+u^2 \partial_2 u^1)\rvert \partial_2u^1)
\nonumber\\
& = (\partial_2 u^1 \partial_1 u^1\rvert \partial_2 u^1)+( u^1 \partial_2 \partial_1 u^1\rvert \partial_2 u^1)+ (\partial_2  u^2 \partial_2 u^1\rvert \partial_2 u^1)+(  u^2 \partial_2 \partial_2 u^1\rvert \partial_2 u^1)
\nonumber\\
&=0,
\label{A2}
\end{align}
where we used $\nabla \cdot u=0$ and $
 (\partial_2  u^1 \partial_1 u^1\rvert \partial_2 u^1)+ (\partial_2  u^2 \partial_2 u^1\rvert \partial_2 u^1)=0$.
Applying H\"older's inequality,  anisotropic inequality
\cite[Lemma~3.4]{liang2021deterministic},
\begin{equation}
\begin{aligned}
&\lVert f\rVert_{L_v^2(L_h^\infty)}^2
\lesssim
\lVert f\rVert_{L^2}\lVert\partial_1f\rVert_{L^2}
+\lVert f\rVert_{L^2}^2,
\\
&\lVert f\rVert_{L_h^2(L_v^\infty)}^2
\lesssim
\lVert f\rVert_{L^2}\lVert\partial_2f\rVert_{L^2}
+\lVert f\rVert_{L^2}^2,
\label{Lia}
\end{aligned}
\end{equation} Integration by parts, and $\nabla \cdot u=0$, we move to work out the 2nd term on \eqref{A1} in the following 
\begin{align}
(\partial_2(u \cdot \nabla u^2)\rvert \partial_2u^2)&= (\partial_2 (u^1 \partial_1 u^2+u^2 \partial_2 u^2)\rvert \partial_2u^2)
\nonumber\\
& = (\partial_2 u^1 \partial_1 u^2\rvert \partial_2 u^2)+( u^1 \partial_2 \partial_1 u^2\rvert \partial_2 u^2)+ (\partial_2  u^2 \partial_2 u^2\rvert \partial_2 u^2)+(  u^2 \partial_2 \partial_2 u^2\rvert \partial_2 u^2)
\nonumber\\
&= (\partial_2 u^1 \partial_1 u^2\rvert \partial_2 u^2)+ (\partial_2  u^2 \partial_2 u^2\rvert \partial_2 u^2)-\frac{1}{2} \int_{D}(\nabla \cdot u)\lvert \partial_2 u^2\rvert^2
\nonumber\\
& \leq (\lVert \partial_2 u^1\rVert_{L_h^{\infty}{L_v^2}} \lVert \partial_1 u^2\rVert_{L_h^{2}L_v^{\infty}}+\lVert \partial_2 u^2\rVert_{L_h^{\infty}{L_v^2}} \lVert \partial_2 u^2\rVert_{L_h^{2}L_v^{\infty}}) \lVert \partial_2 u^2\rVert_{L^2}
\nonumber\\
& \leq (\lVert \partial_2 u^1\rVert_{L_h^{\infty}{L_v^2}} \lVert \partial_1 u^2\rVert_{L_h^{2}L_v^{\infty}}+\lVert \partial_2 u^2\rVert_{L_h^{\infty}{L_v^2}} \lVert \partial_1 u^1\rVert_{L_h^{2}L_v^{\infty}}) \lVert \partial_2 u^2\rVert_{L^2}
\label{A3}
\end{align} Combining \eqref{A1}, \eqref{A2} and \eqref{A3} implies
\begin{align}
I1 & \lesssim (\lVert \partial_2 u\rVert_{L^2}+\lVert \partial_2 u\rVert_{L^2}^{\frac{1}{2}}\lVert \partial_1 \partial_2 u\rVert_{L^2}^{\frac{1}{2}})(\lVert \partial_1 u\rVert_{L^2}+\lVert \partial_1 u\rVert_{L^2}^{\frac{1}{2}}\lVert \partial_1 \partial_2 u\rVert_{L^2}^{\frac{1}{2}})\lVert \partial_2 u^2\rVert_{L^2}
\nonumber\\
& =\lVert \partial_2 u\rVert_{L^2} \lVert \partial_1 u\rVert_{L^2} \lVert \partial_2 u^2\rVert_{L^2}+ \lVert \partial_2 u\rVert_{L^2}^{\frac{1}{2}}\lVert \partial_1 \partial_2 u\rVert_{L^2}^{\frac{1}{2}} \lVert \partial_1 u\rVert_{L^2}\lVert \partial_2 u^2\rVert_{L^2}
\nonumber\\
&+\lVert \partial_2 u\rVert_{L^2}\lVert \partial_1 u\rVert_{L^2}^{\frac{1}{2}}\lVert \partial_1 \partial_2 u\rVert_{L^2}^{\frac{1}{2}}\lVert \partial_2 u^2\rVert_{L^2}+\lVert \partial_2 u\rVert_{L^2}^{\frac{1}{2}}\lVert \partial_1 \partial_2 u\rVert_{L^2}^{\frac{1}{2}}\lVert \partial_1 u\rVert_{L^2}^{\frac{1}{2}}
\lVert \partial_1 \partial_2 u\rVert_{L^2}^{\frac{1}{2}}\lVert \partial_2 u^2\rVert_{L^2}
\nonumber\\
& \leq \lVert \partial_2 u\rVert_{L^2} \lVert \partial_1 u\rVert_{L^2} \lVert \partial_2 u^2\rVert_{L^2}+\lVert \partial_2 u\rVert_{L^2}^{\frac{1}{2}}\lVert \partial_1 \partial_2 u\rVert_{L^2}^{\frac{1}{2}} \lVert \partial_1 u\rVert_{L^2}\lVert \partial_2 u^2\rVert_{L^2}
\nonumber\\
&+\lVert \partial_2 u\rVert_{L^2}\lVert \partial_1 u\rVert_{L^2}^{\frac{1}{2}}\lVert \partial_1 \partial_2 u\rVert_{L^2}^{\frac{1}{2}}\lVert \partial_2 u^2\rVert_{L^2}+\lVert \partial_2 u\rVert_{L^2}\lVert \partial_1 \partial_2 u\rVert_{L^2} \lVert \partial_1 u\rVert_{L^2}
\label{A4}
\end{align}
Applying Young's inequality
and using 
$$
\lVert \partial_1 u\rVert_{L^2}^{\frac{4}{3}}
 \leq \lVert \partial_1 u\rVert_{L^2}+\lVert \partial_1 u\rVert_{L^2}^{2},$$ we deduce from 
\eqref{A4} that
\begin{align}
I1&\leq \lVert \partial_2 u\rVert_{L^2}^{2} \lVert \partial_1 u\rVert_{L^2}+ \frac{1}{8}\lVert \partial_1 \partial_2 u\rVert_{L^2}^{2}+C \lVert \partial_1 u\rVert_{L^2}^{\frac{4}{3}}\lVert \partial_2 u\rVert_{L^2}^{\frac{6}{3}}
\nonumber\\
&+\frac{1}{8}\lVert \partial_1 \partial_2 u\rVert_{L^2}^{2}+C \lVert \partial_2 u\rVert_{L^2}^{2}\lVert \partial_1 u\rVert_{L^2}^{\frac{4}{3}}+ \frac{1}{8}\lVert \partial_1 \partial_2 u\rVert_{L^2}^{2}+C \lVert \partial_1 u\rVert_{L^2}^{2} \lVert \partial_2 u\rVert_{L^2}^{2}
\nonumber\\
& \leq \frac{1}{2}\lVert \partial_1 \partial_2 u\rVert_{L^2}^{2}+ C(\lVert \partial_1 u\rVert_{L^2}+\lVert \partial_1 u\rVert_{L^2}^{2}+\lVert \partial_1 u\rVert_{L^2}^{\frac{4}{3}})\lVert \partial_2 u\rVert_{L^2}^{2}
 \nonumber\\
& \leq \frac{1}{2}\lVert \partial_1 \partial_2 u\rVert_{L^2}^{2}+ C(\lVert \partial_1 u\rVert_{L^2}+\lVert \partial_1 u\rVert_{L^2}^{2})\lVert \partial_2 u\rVert_{L^2}^{2}.
\label{A5}
\end{align}
Applying H\"older's  inequality, \eqref{Lia}, and Young's inequality, we obtain
\begin{align}
\lvert I_2\rvert
&\lesssim
\lVert \nabla B\rVert_{L^2}
\Big(
\lVert \partial_2u\rVert_{L^2}
+
\lVert \partial_2u\rVert_{L^2}^{\frac12}
\lVert \partial_1\partial_2u\rVert_{L^2}^{\frac12}
\Big)
\Big(
\lVert \partial_2B\rVert_{L^2}
+
\lVert \partial_2B\rVert_{L^2}^{\frac12}
\lVert \partial_2^2B\rVert_{L^2}^{\frac12}
\Big)
\nonumber\\
&\lesssim
\lVert \nabla B\rVert_{L^2}
\lVert \partial_2u\rVert_{L^2}
\lVert \partial_2B\rVert_{L^2}
+
\lVert \nabla B\rVert_{L^2}
\lVert \partial_2u\rVert_{L^2}
\lVert \partial_2B\rVert_{L^2}^{\frac12}
\lVert \partial_2^2B\rVert_{L^2}^{\frac12}
\nonumber\\
&\quad
+
\lVert \nabla B\rVert_{L^2}
\lVert \partial_2u\rVert_{L^2}^{\frac12}
\lVert \partial_1\partial_2u\rVert_{L^2}^{\frac12}
\lVert \partial_2B\rVert_{L^2}
\nonumber\\
&\quad
+
\lVert \nabla B\rVert_{L^2}
\lVert \partial_2u\rVert_{L^2}^{\frac12}
\lVert \partial_1\partial_2u\rVert_{L^2}^{\frac12}
\lVert \partial_2B\rVert_{L^2}^{\frac12}
\lVert \partial_2^2B\rVert_{L^2}^{\frac12}
\nonumber\\
&\leq
\frac{\nu}{4}
\lVert \partial_1\partial_2u\rVert_{L^2}^{2}
+
\frac{\eta}{4}
\lVert \nabla\partial_2B\rVert_{L^2}^{2}
\nonumber\\
&\quad
+
C
\Big(
\lVert \nabla B\rVert_{L^2}
+
\lVert \nabla B\rVert_{L^2}^{2}
\Big)
\Big(
\lVert \partial_2u\rVert_{L^2}^{2}
+
\lVert \partial_2B\rVert_{L^2}^{2}
\Big),
\label{A6}
\end{align}
Applying H\"older's  inequality, \eqref{Lia}, and Young's inequality, we obtain
\begin{align}
\lvert I_3\rvert
&\lesssim
\lVert \nabla B\rVert_{L^2}
\lVert \partial_2u\rVert_{L_h^\infty L_v^2}
\lVert \partial_2B\rVert_{L_h^2L_v^\infty}
\nonumber\\
&\lesssim
\lVert \nabla B\rVert_{L^2}
\Big(
\lVert \partial_2u\rVert_{L^2}
+
\lVert \partial_2u\rVert_{L^2}^{\frac12}
\lVert \partial_1\partial_2u\rVert_{L^2}^{\frac12}
\Big)
\Big(
\lVert \partial_2B\rVert_{L^2}
+
\lVert \partial_2B\rVert_{L^2}^{\frac12}
\lVert \partial_2^2B\rVert_{L^2}^{\frac12}
\Big)
\nonumber\\
&\leq
\frac{\nu}{4}
\lVert \partial_1\partial_2u\rVert_{L^2}^2
+
\frac{\eta}{4}
\lVert \partial_2^2B\rVert_{L^2}^2
\nonumber\\
&\quad
+
C\lVert \nabla B\rVert_{L^2}
\Big(
\lVert \partial_2u\rVert_{L^2}^2
+
\lVert \partial_2B\rVert_{L^2}^2
\Big)
\nonumber\\
&\quad
+
C\lVert \nabla B\rVert_{L^2}^{\frac43}
\left[
\frac23
\left(
\lVert \partial_2u\rVert_{L^2}^{\frac43}
\right)^{\frac32}
+
\frac13
\left(
\lVert \partial_2B\rVert_{L^2}^{\frac23}
\right)^3
\right]
\nonumber\\
&\quad
+
C\lVert \nabla B\rVert_{L^2}^{\frac43}
\left[
\frac13
\left(
\lVert \partial_2u\rVert_{L^2}^{\frac23}
\right)^3
+
\frac23
\left(
\lVert \partial_2B\rVert_{L^2}^{\frac43}
\right)^{\frac32}
\right]
\nonumber\\
&\quad
+
\frac{C}{2}
\lVert \nabla B\rVert_{L^2}^2
\Big(
\lVert \partial_2u\rVert_{L^2}^2
+
\lVert \partial_2B\rVert_{L^2}^2
\Big)
\nonumber\\
&\leq
\frac{\nu}{4}
\lVert \partial_1\partial_2u\rVert_{L^2}^2
+
\frac{\eta}{4}
\lVert \nabla\partial_2B\rVert_{L^2}^2
\nonumber\\
&\quad
+
C
\Big(
\lVert \nabla B\rVert_{L^2}
+
\lVert \nabla B\rVert_{L^2}^2
\Big)
\Big(
\lVert \partial_2u\rVert_{L^2}^2
+
\lVert \partial_2B\rVert_{L^2}^2
\Big).
\label{A7}
\end{align}
Applying H\"older's  inequality, \eqref{Lia}, and Young's inequality, we obtain
\begin{align}
\lvert I_4\rvert
&=
2\left\lvert
\big((\partial_2B\cdot\nabla)u,\partial_2B\big)
\right\rvert
\nonumber\\
&\lesssim
\Big(
\lVert \partial_2B\rVert_{L_h^\infty L_v^2}
\lVert \partial_1u\rVert_{L_h^2L_v^\infty}
+
\lVert \partial_2u\rVert_{L_h^\infty L_v^2}
\lVert \partial_1B\rVert_{L_h^2L_v^\infty}
\Big)
\lVert \partial_2B\rVert_{L^2}
\nonumber\\
&\lesssim
\Big(
\lVert \partial_2B\rVert_{L^2}
+
\lVert \partial_2B\rVert_{L^2}^{\frac12}
\lVert \partial_1\partial_2B\rVert_{L^2}^{\frac12}
\Big)
\Big(
\lVert \partial_1u\rVert_{L^2}
+
\lVert \partial_1u\rVert_{L^2}^{\frac12}
\lVert \partial_1\partial_2u\rVert_{L^2}^{\frac12}
\Big)
\lVert \partial_2B\rVert_{L^2}
\nonumber\\
&\quad
+
\Big(
\lVert \partial_2u\rVert_{L^2}
+
\lVert \partial_2u\rVert_{L^2}^{\frac12}
\lVert \partial_1\partial_2u\rVert_{L^2}^{\frac12}
\Big)
\Big(
\lVert \partial_1B\rVert_{L^2}
+
\lVert \partial_1B\rVert_{L^2}^{\frac12}
\lVert \partial_1\partial_2B\rVert_{L^2}^{\frac12}
\Big)
\lVert \partial_2B\rVert_{L^2}
\nonumber\\
&\lesssim
\lVert \partial_1u\rVert_{L^2}
\lVert \partial_2B\rVert_{L^2}^2
+
\lVert \partial_1u\rVert_{L^2}^{\frac12}
\lVert \partial_1\partial_2u\rVert_{L^2}^{\frac12}
\lVert \partial_2B\rVert_{L^2}^2
\nonumber\\
&\quad
+
\lVert \partial_1u\rVert_{L^2}
\lVert \partial_2B\rVert_{L^2}^{\frac32}
\lVert \partial_1\partial_2B\rVert_{L^2}^{\frac12}
+
\lVert \partial_1u\rVert_{L^2}^{\frac12}
\lVert \partial_1\partial_2u\rVert_{L^2}^{\frac12}
\lVert \partial_2B\rVert_{L^2}^{\frac32}
\lVert \partial_1\partial_2B\rVert_{L^2}^{\frac12}
\nonumber\\
&\quad
+
\lVert \partial_1B\rVert_{L^2}
\lVert \partial_2u\rVert_{L^2}
\lVert \partial_2B\rVert_{L^2}
+
\lVert \partial_1B\rVert_{L^2}
\lVert \partial_2u\rVert_{L^2}^{\frac12}
\lVert \partial_1\partial_2u\rVert_{L^2}^{\frac12}
\lVert \partial_2B\rVert_{L^2}
\nonumber\\
&\quad
+
\lVert \partial_1B\rVert_{L^2}^{\frac12}
\lVert \partial_1\partial_2B\rVert_{L^2}^{\frac12}
\lVert \partial_2u\rVert_{L^2}
\lVert \partial_2B\rVert_{L^2}
\nonumber\\
&\quad
+
\lVert \partial_1B\rVert_{L^2}^{\frac12}
\lVert \partial_1\partial_2B\rVert_{L^2}^{\frac12}
\lVert \partial_2u\rVert_{L^2}^{\frac12}
\lVert \partial_1\partial_2u\rVert_{L^2}^{\frac12}
\lVert \partial_2B\rVert_{L^2}
\label{A8}
\end{align}
Since the elementary  inequality
$x^{\frac23}\leq 1+x,
\, x\geq0,$ implies
$$
\Big(
\lVert \partial_1u\rVert_{L^2}
\lVert \partial_2B\rVert_{L^2}
\Big)^{\frac23}
\leq
1+
\lVert \partial_1u\rVert_{L^2}
\lVert \partial_2B\rVert_{L^2},
$$ therefore which yields together with  \eqref{A8} as follows
\begin{align}
\lvert I_4\rvert
&\leq
\frac{\nu}{4}
\lVert \partial_1\partial_2u\rVert_{L^2}^{2}
+
\frac{\eta}{4}
\lVert \partial_1\partial_2B\rVert_{L^2}^{2}
\nonumber\\
&\quad
+
C_{\nu,\eta}
\Big(
\lVert \partial_1u\rVert_{L^2}
+
\lVert \partial_1u\rVert_{L^2}^{2}
+
\lVert \partial_1B\rVert_{L^2}
+
\lVert \partial_1B\rVert_{L^2}^{2}+\lVert \partial_2B\rVert_{L^2}^{2}
\Big)
\Big(
\lVert \partial_2u\rVert_{L^2}^{2}
+
\lVert \partial_2B\rVert_{L^2}^{2}
\Big)
.
\label{A9}
\end{align}
Now it follows from \eqref{A5}-\eqref{A9} that
\begin{align}
\frac{d}{dt}\Big(
\lVert \partial_2u(t)\rVert_{L^2}^2
+
\lVert \partial_2B(t)\rVert_{L^2}^2
\Big)
&+
\nu\lVert \partial_1\partial_2u(t)\rVert_{L^2}^2
+
\eta\lVert \nabla\partial_2B(t)\rVert_{L^2}^2
\nonumber\\
&\leq
C_{\nu,\eta}
\Big(
\lVert \partial_1u(t)\rVert_{L^2}
+
\lVert \partial_1u(t)\rVert_{L^2}^2
+
\lVert \nabla B(t)\rVert_{L^2}
+
\lVert \nabla B(t)\rVert_{L^2}^2
\Big)
\nonumber\\
&\qquad\times
\Big(
\lVert \partial_2u(t)\rVert_{L^2}^2
+
\lVert \partial_2B(t)\rVert_{L^2}^2
\Big),
\label{s4vzq}
\end{align}
where the constant $C_{\nu,\eta}$.
Therefore, by Gr\"onwall's inequality, \eqref{s4vzq} gives
\begin{align}
&
\lVert \partial_2u(t)\rVert_{L^2}^2
+
\lVert \partial_2B(t)\rVert_{L^2}^2
+
\int_0^t
\Big(
\nu\lVert \partial_1\partial_2u(s)\rVert_{L^2}^2
+
\eta\lVert \nabla\partial_2B(s)\rVert_{L^2}^2
\Big)\,ds
\nonumber\\
&\qquad\leq
\exp\left\{
C_{\nu,\eta}
\int_0^t
\Big(
\lVert \partial_1u(s)\rVert_{L^2}
+
\lVert \partial_1u(s)\rVert_{L^2}^2
+
\lVert \nabla B(s)\rVert_{L^2}
+
\lVert \nabla B(s)\rVert_{L^2}^2
\Big)\,ds
\right\}
\nonumber\\
&\qquad\quad\times
\Big(
\lVert \partial_2u_0\rVert_{L^2}^2
+
\lVert \partial_2B_0\rVert_{L^2}^2
\Big).
\end{align}
\end{proof}
\noindent
We now turn to  state the main results.
\section{Main Results}
\begin{theorem}
\label{thm10}
Assume that
$
(u_0,B_0)\in H^{0,1}\times H^{0,1},
$
and suppose that
$\sigma_1$ and $\sigma_2$ satisfy
Hypothesis~\ref{HYp1} with
\begin{align}
0\le K_2<\frac15,
\qquad
0\le \widetilde K_2<\frac15.
\end{align}
Then \eqref{MHD} admits a global martingale solution on
$[0,\infty)$ in the sense of
Definition~\ref{DEF}.
Moreover, \eqref{MHD} admits a global probabilistically weak solution
on $[0,\infty)$ in the sense of Definition~\ref{d-w-s}.
\end{theorem}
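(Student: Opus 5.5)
The plan follows the standard stochastic compactness route: uniform estimates for the Galerkin system \eqref{HGS}--\eqref{qwe}, tightness of the laws, Skorokhod--Jakubowski representation, identification of the limit as a martingale solution, and finally the martingale representation theorem to obtain a probabilistically weak solution.

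\textbf{Step 1: $L^2$-level moment bounds.} First, \eqref{HGS}--\eqref{qwe} is a finite-dimensional SDE with locally Lipschitz coefficients. Global existence of $(u^n,B^n)$ follows once the $L^2$ estimate below excludes blow-up, using a stopping time argument. Apply It\^o's formula to $\lVert u^n\rVert_{L^2}^2+\lVert B^n\rVert_{L^2}^2$. The nonlinear terms cancel exactly as in \eqref{2.28H}, since $P_n$ is self-adjoint and $u^n,B^n\in H_n$.

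For the It\^o correction, use \eqref{nL2} together with \eqref{asd1}. The stochastic integral is handled by the Burkholder--Davis--Gundy inequality and \eqref{LKH8}, in the form
\[
\mathbb E\sup_{t\le T}\Big|\int_0^t (\sigma\,dW,u^n)\Big|\le \tfrac14\mathbb E\sup_{t\le T}\lVert u^n\rVert_{L^2}^2+C\,\mathbb E\int_0^T\lVert\sigma\rVert_{L_2}^2\,dt .
\]
The term $K_2(\nu\lVert\partial_1u^n\rVert^2+\eta\lVert\nabla B^n\rVert^2)$ generated twice (once by the It\^o correction, once by BDG) is absorbed into the dissipation; this is where the smallness $K_2<1/5$ enters. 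Gronwall then yields uniform bounds on $\mathbb E[\sup_t(\lVert u^n\rVert^2+\lVert B^n\rVert^2)^p]$ and on $\mathbb E\big(\int_0^T\nu\lVert\partial_1u^n\rVert^2+\eta\lVert\nabla B^n\rVert^2\big)^p$. Higher moments are obtained by applying It\^o's formula to the $p$-th power.

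\textbf{Step 2: $H^{0,1}$-level bounds (main obstacle).} Apply It\^o's formula to $\lVert\partial_2u^n\rVert^2+\lVert\partial_2B^n\rVert^2$, using the commutation $P_n=P_n^{0,1}$ established above. The drift terms are bounded exactly as in \eqref{A5}--\eqref{A9} of the preceding lemma, giving
\[
d Y_n+\tfrac12 D_n\,dt\le C\,g_n(t)\,Y_n\,dt+dM_n+(\text{It\^o term})\,dt,
\]
where $g_n=\lVert\partial_1u^n\rVert+\lVert\partial_1u^n\rVert^2+\lVert\nabla B^n\rVert+\lVert\nabla B^n\rVert^2$. The It\^o term is controlled by \eqref{nH01}, with $\widetilde K_2<1/5$ absorbing the dissipation part.

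The obstruction is that $g_n$ is random and only integrable, so a direct expectation plus Gronwall fails. I would proceed in one of two ways:

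\begin{itemize}
\item apply It\^o's formula to $e^{-C\int_0^t g_n}\,Y_n$, which gives uniform bounds for $\mathbb E\sup_t e^{-C\int_0^t g_n}Y_n$;
\item alternatively, use the stochastic Gronwall lemma with stopping times $\tau_R^n=\inf\{t:\int_0^t g_n\ge R\}$.
\end{itemize}

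Either route gives bounds in probability, uniform in $n$, of the form $\mathbb P\big(\sup_{t\le T}Y_n+\int_0^TD_n>M\big)\to0$ as $M\to\infty$. Combined with Step 1, which makes $\int_0^T g_n$ tight, this is sufficient for tightness.

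\textbf{Step 3: tightness, passage to the limit, and the weak solution.} Time regularity comes from Aldous' criterion, or from a fractional Sobolev bound $W^{\alpha,2}(0,T;H^{-1})$ obtained from \eqref{NH1} and $L^1_tH^{-1}$ bounds on $P_nF_i$, which follow from the Step 1 bounds. An Aubin--Lions type compactness result then gives tightness of the laws in
\[
\bar\Omega=\big(C([0,T];H^{-1})\cap L^2(0,T;L^2)\big)^2 ,
\]
together with the weak-star topologies on the $H^{0,1}$ and $H^{1,1}$ spaces; the Jakubowski--Skorokhod theorem supplies a.s.\ convergent copies. The limit regularity in (M1) follows from lower semicontinuity and Fatou's lemma.

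To pass to the limit in $\langle F_i(u^n,B^n),\phi\rangle$, use strong $L^2_tL^2$ convergence of the nonlinear products against smooth test functions. For the martingales $M^\phi_u$ and $M^\psi_B$, pass to the limit in the expectations $\mathbb E[(M(t)-M(s))h]=0$, in the analogous identities for $M^2-\int\lVert\sigma^*\phi\rVert^2$, and in the cross-variation identity. This uses the continuity of $\sigma_i$ in $H^{-1}$ from \eqref{NlH}, together with \eqref{dsa}, and uniform integrability from the moment bounds of Step 1; it establishes (M2) and (M3). Finally, the martingale representation theorem, on an extended probability space, produces independent cylindrical Wiener processes $W_1,W_2$ such that \eqref{PrW1}--\eqref{PrW2} hold, which is the global probabilistically weak solution. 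Solutions on each $[0,T]$ are extended to $[0,\infty)$ by a diagonal argument over $T\in\mathbb N$.
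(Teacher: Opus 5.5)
Your plan follows the paper's proof step for step. Both use the Galerkin scheme, $L^2$ and fourth-moment bounds with the $K_2$-terms from the It\^o correction and from BDG absorbed into the dissipation, and the $H^{0,1}$ estimate closed with the weight $e^{-C\int_0^t g_n\,ds}$, which is exactly the paper's $e^{-h_n(t)}$. They then use tightness on sets where the energy bound and the weighted bound both hold, Skorokhod--Jakubowski, identification of $M_u^\phi$, $M_B^\psi$ and their (cross-)variations, and the martingale representation theorem.

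A minor point in Step 1: with only $K_2<\frac15$, the $p$-th power argument does not give all moments. For $Y=\lVert u^n\rVert_{L^2}^2+\lVert B^n\rVert_{L^2}^2$, the It\^o terms for $Y^p$ produce $p(2p-1)K_2\,Y^{p-1}(\nu\lVert\partial_1u^n\rVert_{L^2}^2+\eta\lVert\nabla B^n\rVert_{L^2}^2)$ against the dissipation $2p\,Y^{p-1}(\cdots)$. So it closes only when $(2p-1)K_2<2$. This is fine for $p=2$, which is all you use.

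The step that fails as written is the time regularity in Step 3.

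\textbf{The drift bound.} An $H^{-1}$ bound on $P_nF_i(u^n,B^n)$ does not follow from the Step 1 bounds via the usual estimate $\lVert\operatorname{div}(u^n\otimes u^n)\rVert_{H^{-1}}\le\lVert u^n\rVert_{L^4}^2$. With viscosity only in $x_1$, $\sup_t\lVert u^n\rVert_{L^2}$ and $\int_0^T\lVert\partial_1u^n\rVert_{L^2}^2\,dt$ do not control $\lVert u^n\rVert_{L^4}$; shear fields $(f(x_2),0)$ have $\partial_1u=0$. The paper handles this by estimating increments only on the high-probability sets $K^n_{R_1}$ where the Step 2 bounds hold. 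There it uses $\lVert v\rVert_{L^4}^4\lesssim\lVert v\rVert_{H^{0,1}}^2\lVert v\rVert_{H^{1,1}}^2$ to get an $L^2(0,T;H^{-1})$ drift bound. An energy-level alternative exists, but you would have to prove it: integrate by parts in $x_1$ in the terms $u^1u^j\partial_1\phi^j$, and use $\partial_2u^2=-\partial_1u^1$ with \eqref{Lia} in the terms $u^2u^j\partial_2\phi^j$. This gives an $L^{4/3}(0,T;H^{-1})$ drift bound.

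\textbf{Compactness in $C([0,T];H^{-1})$.} An $L^1_t$ drift bound combined with $W^{\alpha,2}$, $\alpha<\frac12$, together with the $H^{1,1}$ bound, gives compactness in $L^2(0,T;L^2)$ but not in $C([0,T];H^{-1})$. Bounded sets of $W^{1,1}(0,T)$, or of $W^{\alpha,2}(0,T)$ with $\alpha<\frac12$, are not equicontinuous (steepening ramps are a counterexample). An $L^1_t$ bound is likewise not enough to verify Aldous's condition. You need $L^p_t$ control of the drift with some $p>1$. For the stochastic integral you need either $W^{\alpha,p}$ with $\alpha p>1$ or a H\"older bound via Kolmogorov. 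The paper takes the Kolmogorov route, using \eqref{NH1} and the fourth-moment estimate to obtain $C^{1/8}([0,T];H^{-1})$.

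With this repaired, the rest of your argument goes through.
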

\begin{theorem}
\label{thm1}
Assume that
$
(u_0, B_0)\in H^{0,1}\times H^{0,1},$
and suppose that 
$\sigma_1$ and $\sigma_2$ satisfy
Hypothesis~\ref{HYp1} with
\begin{align}
0\le K_2<\frac15,
\qquad
0\le \widetilde K_2<\frac15, \qquad 0\leq L_2 < \frac {1} {10}.
\end{align}
If
$
(u^{(1)},B^{(1)})
$
and
$
(u^{(2)},B^{(2)})
$
are two probabilistically weak solutions of \eqref{MHD}
on the same stochastic basis, then
$
(u^{(1)},B^{(1)})=(u^{(2)},B^{(2)})
$
$\mathbb P$-a.s.
\end{theorem}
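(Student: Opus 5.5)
We prove Theorem~\ref{thm1} by an $L^2$ energy estimate for the difference, localized with a stopping time and closed with a stochastic Gr\"onwall argument. Set $U=u^{(1)}-u^{(2)}$ and $V=B^{(1)}-B^{(2)}$. Subtracting the weak formulations \eqref{PrW1}--\eqref{PrW2} for the two solutions gives
\begin{align*}
dU&=\bigl[\nu\partial_1^2U-(U\cdot\nabla)u^{(1)}-(u^{(2)}\cdot\nabla)U+(V\cdot\nabla)B^{(1)}+(B^{(2)}\cdot\nabla)V\bigr]dt+\Sigma_1\,dW_1,\\
dV&=\bigl[\eta\Delta V-(U\cdot\nabla)B^{(1)}-(u^{(2)}\cdot\nabla)V+(V\cdot\nabla)u^{(1)}+(B^{(2)}\cdot\nabla)U\bigr]dt+\Sigma_2\,dW_2,
\end{align*}
with $U(0)=V(0)=0$. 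Since $U\in L^2_{\rm loc}(H^{1,0})$ and $V\in L^2_{\rm loc}(H^1)$, the drifts lie in $L^1_{\rm loc}(H^{-1})$. We test against $(U,V)$ via It\^o's formula in the Gelfand triple $H^1\subset H\subset H^{-1}$. To justify this with only horizontal regularity of $U$, we would mollify or Galerkin-project the difference equation and pass to the limit.

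\textbf{Step 1: trilinear cancellations.} The terms $((u^{(2)}\cdot\nabla)U,U)$ and $((u^{(2)}\cdot\nabla)V,V)$ vanish. So does the sum $((B^{(2)}\cdot\nabla)V,U)+((B^{(2)}\cdot\nabla)U,V)$. Hence
\begin{align*}
d\bigl(\lVert U\rVert_{L^2}^2+\lVert V\rVert_{L^2}^2\bigr)&+2\bigl(\nu\lVert\partial_1U\rVert_{L^2}^2+\eta\lVert\nabla V\rVert_{L^2}^2\bigr)dt
=2\bigl[-((U\cdot\nabla)u^{(1)},U)+((V\cdot\nabla)B^{(1)},U)\\
&-((U\cdot\nabla)B^{(1)},V)+((V\cdot\nabla)u^{(1)},V)\bigr]dt+\bigl(\lVert\Sigma_1\rVert_{L_2}^2+\lVert\Sigma_2\rVert_{L_2}^2\bigr)dt+dM_t.
\end{align*}
Here $M_t=2\int_0^t\bigl[(U,\Sigma_1dW_1)+(V,\Sigma_2dW_2)\bigr]$.

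\textbf{Step 2: nonlinear terms (main obstacle).} The term $((U\cdot\nabla)u^{(1)},U)$ is the delicate one, because $U$ has no vertical dissipation. Write it as $(U^1\partial_1u^{(1)}+U^2\partial_2u^{(1)},U)$. For the $U^2\partial_2u^{(1)}\cdot U$ part we use $\partial_2U^2=-\partial_1U^1$ together with the anisotropic inequalities \eqref{Lia}, placing $L_h^\infty L_v^2$ on $U$ and $L_h^2L_v^\infty$ on $\partial_2u^{(1)}$. This gives the bound
\[
\lVert\partial_2u^{(1)}\rVert_{L^2}^{1/2}\lVert\partial_1\partial_2u^{(1)}\rVert^{1/2}_{L^2}\lVert U\rVert_{L^2}\lVert\partial_1U\rVert_{L^2}
\]
plus lower-order terms. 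The $U^1\partial_1u^{(1)}$ part is handled symmetrically. Young's inequality then yields $\tfrac{\nu}{8}\lVert\partial_1U\rVert^2+C\,\Phi(t)\lVert U\rVert^2$ with $\Phi\lesssim 1+\lVert u^{(1)}\rVert_{H^{1,1}}^2+\lVert u^{(1)}\rVert_{H^{0,1}}^2$. The terms involving $B^{(1)}$ and $V$ use the full $\nabla V$ dissipation and $B^{(1)}\in L^2(H^{1,1})\cap L^2(H^{0,2})$. Each is bounded by $\tfrac{\nu}{8}\lVert\partial_1U\rVert^2+\tfrac{\eta}{8}\lVert\nabla V\rVert^2+C\Phi(t)(\lVert U\rVert^2+\lVert V\rVert^2)$, where $\Phi$ now also contains $\lVert B^{(1)}\rVert_{H^{1,1}}^2+\lVert B^{(1)}\rVert_{H^{0,2}}^2$. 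By the regularity in Definition~\ref{d-w-s}(i), $\int_0^T\Phi\,dt<\infty$ a.s.

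\textbf{Step 3: noise and Gr\"onwall.} By \eqref{LSI}, the It\^o correction is at most $L_1(\lVert U\rVert^2+\lVert V\rVert^2)+L_2(\nu\lVert\partial_1U\rVert^2+\eta\lVert\nabla V\rVert^2)$. The assumption $L_2<\tfrac1{10}$ lets the dissipation absorb it, even after the Burkholder--Davis--Gundy step. There the quadratic variation of $M$ is bounded through \eqref{LKH8} by $\sup(\lVert U\rVert^2+\lVert V\rVert^2)\int\lVert\Sigma\rVert^2$, and this costs a further factor of $L_2$. Define the stopping time $\tau_R=\inf\{t:\int_0^t\Phi\,ds\ge R\}\wedge T$ and set $Y=e^{-C\int_0^t\Phi}(\lVert U\rVert^2+\lVert V\rVert^2)$. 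Applying It\^o's product rule to $Y$ removes the $\Phi$-term. Taking expectations up to $\tau_R$ then gives $\mathbb E\sup_{t\le\tau_R}Y(t)\le C L_1\int_0^T\mathbb E\sup_{s\le t\wedge\tau_R}Y\,ds$, and hence $Y\equiv0$ on $[0,\tau_R]$ by Gr\"onwall. Since $\tau_R\uparrow T$ a.s. as $R\to\infty$ and $T$ is arbitrary, $U=V=0$ a.s.
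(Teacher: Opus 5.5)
Your proposal is correct and follows essentially the paper's own route: the same difference equations and cancellations, anisotropic bounds from \eqref{Lia} on the four remaining trilinear terms, an exponential weight, BDG with absorption of the $L_2$-terms (this is where $L_2<\frac1{10}$ enters, via $1-L_2-9L_2/\beta>0$), and Gr\"onwall. Your stopping-time localization and your appeal to Definition~\ref{d-w-s}(i) for $\int_0^T\Phi\,dt<\infty$ are more careful than the paper, which takes expectations without localizing and justifies $R\in L^1(0,T)$ by the deterministic bounds \eqref{2.28H}, \eqref{0.49A}. However, $\tau_R$ should also stop when $\lVert U\rVert_{L^2}^2+\lVert V\rVert_{L^2}^2$ exceeds $R$: bounding $\int\Phi$ alone does not make $\mathbb E\sup_{t\le\tau_R}Y$ finite, and that finiteness is needed to absorb the $\beta$-term from BDG.

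There is also a slip in Step~2: the mixed norms are swapped. The norm $L_h^2L_v^\infty$ must go on $U^2$, which is exactly where $\partial_2U^2=-\partial_1U^1$ is used, and $L_h^\infty L_v^2$ must go on $\partial_2u^{(1)}$. With your placement, \eqref{Lia} would require $\partial_2^2u^{(1)}\in L^2$, which is not part of the regularity in Definition~\ref{d-w-s}(i).
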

\noindent
We begin the proof by deriving the necessary a priori estimates for the
Galerkin approximations.
\begin{lemma}
\label{ELY6}
Let $T>0$, and under the hypothesis of Theorem \ref{thm10}, we obtain
\begin{align}
\mathbb E
\sup_{0\le t\le T}
\left(
\lVert u^n(t)\rVert_{L^2}^2
+
\lVert B^n(t)\rVert_{L^2}^2
\right)
+
\mathbb E
\int_0^T
\left(
\nu\lVert \partial_1u^n(t)\rVert_{L^2}^2
+
\eta\lVert \nabla B^n(t)\rVert_{L^2}^2
\right)\,dt
\le
C_T.
\label{L2engy}
\end{align}
The constant $C_T$ depends only on
$T$, $K_0$, $K_1$, $K_2$, $\nu$, $\eta$, but not $n$.
\end{lemma}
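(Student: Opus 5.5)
We apply Itô's formula to $\lVert u^n\rVert_{L^2}^2+\lVert B^n\rVert_{L^2}^2$ along the finite-dimensional system \eqref{HGS}--\eqref{qwe}. The drift terms are handled exactly as in the deterministic identity \eqref{2.28H}. Since $u^n,B^n\in H_n$ and $P_n$ is self-adjoint, the projections drop out against $u^n,B^n$. The transport terms $((u^n\cdot\nabla)u^n,u^n)$ and $((u^n\cdot\nabla)B^n,B^n)$ vanish by $\nabla\cdot u^n=0$. The coupling terms cancel, $((B^n\cdot\nabla)B^n,u^n)+((B^n\cdot\nabla)u^n,B^n)=0$, by $\nabla\cdot B^n=0$. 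The dissipative terms give $-2\nu\lVert\partial_1u^n\rVert_{L^2}^2-2\eta\lVert\nabla B^n\rVert_{L^2}^2$. This leaves
\begin{align*}
d\bigl(\lVert u^n\rVert_{L^2}^2+\lVert B^n\rVert_{L^2}^2\bigr)
+2\bigl(\nu\lVert\partial_1u^n\rVert_{L^2}^2+\eta\lVert\nabla B^n\rVert_{L^2}^2\bigr)dt
&=\sum_{i=1}^2\lVert P_n\sigma_i\Pi_n^{(i)}\rVert_{L_2(\mathfrak U_i;L^2)}^2dt
+dM^n,
\end{align*}
where $dM^n=2(u^n,\sigma_1\Pi_n^{(1)}dW_1)+2(B^n,\sigma_2\Pi_n^{(2)}dW_2)$. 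The Itô correction is bounded, using $\lVert P_n\rVert\le1$ and \eqref{asd1}, by the left side of \eqref{nL2}. Hence it is at most $K_0+K_1(\lVert u^n\rVert^2+\lVert B^n\rVert^2)+K_2(\nu\lVert\partial_1u^n\rVert^2+\eta\lVert\nabla B^n\rVert^2)$.

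Since $K_2<1/5<2$, the dissipative part of the Itô correction is absorbed into the left side. What remains is a coefficient $(2-K_2)$ in front of the dissipation.

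To make expectations legitimate, we localize with the stopping times $\tau_N=\inf\{t:\lVert u^n\rVert_{L^2}^2+\lVert B^n\rVert_{L^2}^2\ge N\}\wedge T$. Taking the supremum over $[0,t\wedge\tau_N]$ and then expectation, the key step is the martingale term. For it we use the Burkholder--Davis--Gundy inequality together with \eqref{LKH8}:
\begin{align*}
\mathbb E\sup_{s\le t\wedge\tau_N}\lvert M^n(s)\rvert
&\le C\,\mathbb E\Bigl(\int_0^{t\wedge\tau_N}\bigl(\lVert u^n\rVert^2+\lVert B^n\rVert^2\bigr)\sum_i\lVert\sigma_i\rVert_{L_2(\mathfrak U_i;L^2)}^2ds\Bigr)^{1/2}\\
&\le \tfrac12\mathbb E\sup_{s\le t\wedge\tau_N}\bigl(\lVert u^n\rVert^2+\lVert B^n\rVert^2\bigr)
+C'\,\mathbb E\int_0^{t\wedge\tau_N}\sum_i\lVert\sigma_i\rVert_{L_2(\mathfrak U_i;L^2)}^2ds .
\end{align*}
We insert \eqref{nL2} once more. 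The last integral then contains $C'K_2$ times the dissipation, and this is the main obstacle. The BDG constant multiplies $K_2$, so absorbing it requires $C'K_2$ plus the Itô contribution $K_2$ to stay below $2$. The hypothesis $K_2<1/5$ is presumably calibrated for exactly this.

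To avoid relying on a sharp BDG constant, we would argue in two stages. First, take expectations without the supremum, so the martingale has zero mean. This gives $(2-K_2)\,\mathbb E\int_0^{t\wedge\tau_N}(\nu\lVert\partial_1u^n\rVert^2+\eta\lVert\nabla B^n\rVert^2)\le \mathbb E(\lVert u_0\rVert^2+\lVert B_0\rVert^2)+K_0T+K_1\mathbb E\int_0^{t\wedge\tau_N}(\lVert u^n\rVert^2+\lVert B^n\rVert^2)$. Gronwall's inequality (using $\lVert P_nu_0\rVert\le\lVert u_0\rVert$) then bounds both $\sup_t\mathbb E(\lVert u^n\rVert^2+\lVert B^n\rVert^2)$ and the expected dissipation. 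Second, return to the supremum estimate. There the dissipation term produced by BDG is already controlled by the first stage, so no smallness of the BDG constant is needed. The $\frac12\mathbb E\sup$ term is absorbed, which is legitimate because it is finite thanks to $\tau_N$. Gronwall's inequality is then applied to the supremum.

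Finally, the bounds are uniform in $N$, and $\tau_N\to T$ because $u^n,B^n$ have continuous paths. Monotone convergence then yields \eqref{L2engy}, with $C_T$ depending only on $T,K_0,K_1,K_2,\nu,\eta$ and the norms of the initial data.
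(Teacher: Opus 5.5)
Your proof is correct, but it is organized differently from the paper's. The paper does a single pass. After It\^o's formula it takes $\mathbb E\sup$ directly and applies BDG (with constant $3$) together with Young's inequality $6ab\le\beta a^2+\frac9\beta b^2$. It then absorbs at once both $\beta\,\mathbb E\sup(\lVert u^n\rVert_{L^2}^2+\lVert B^n\rVert_{L^2}^2)$ and the term $\frac9\beta K_2\,\mathbb E\int_0^t(\nu\lVert\partial_1u^n\rVert_{L^2}^2+\eta\lVert\nabla B^n\rVert_{L^2}^2)\,ds$ produced by BDG. This gives \eqref{L2A2} with dissipation coefficient $2-(1+\frac9\beta)K_2$; the paper drops that term and runs Gronwall on the supremum. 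This coefficient is positive for some $\beta\in(0,1)$ exactly when $K_2<\lim_{\beta\uparrow1}\frac{2\beta}{\beta+9}=\frac15$, so your guess about where $1/5$ comes from is right.

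You instead decouple the two absorptions. The stopped martingale has zero mean, which bounds the expected dissipation and $\sup_t\mathbb E$ of the energy under the much weaker condition $K_2<2$. Only then do you invoke BDG, and by that point the $K_2$-dissipation term it produces is already bounded, so the size of the BDG constant plays no role. Your route therefore buys three things:
\begin{itemize}
\item independence from the sharp BDG constant;
\item no smallness of $K_2$ beyond $K_2<2$ for this lemma;
\item a rigorous absorption of $\frac12\mathbb E\sup$ thanks to $\tau_N$, whereas the paper moves $\beta\,\mathbb E\sup$ to the left without first knowing it is finite.
\end{itemize}
The same decoupling is also the robust way to do the fourth-moment bound of Lemma~\ref{lemma-fourth-moment-L2}. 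There the paper's one-pass coefficient $4-(6+\frac{36}{\gamma})K_2$ is positive for some $\gamma\in(0,1)$ only when $K_2<\frac{2}{21}$, not for all $K_2<\frac15$.

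The paper's route is shorter and gets both bounds from one inequality, at the price of tying the admissible size of $K_2$ to the BDG constant. One minor point: the Gronwall step in your second stage is unnecessary. The first stage already bounds $\mathbb E\int_0^{T\wedge\tau_N}(\lVert u^n\rVert_{L^2}^2+\lVert B^n\rVert_{L^2}^2)\,ds$ uniformly in $N$.
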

\begin{proof}
Recall that the Galerkin approximations satisfy the finite-dimensional
stochastic differential system
\eqref{gVT}-\eqref{GVL}. Applying It\^o's formula to
$
\lVert u^n(t)\rVert_{L^2}^2+\lVert B^n(t)\rVert_{L^2}^2,
$
we obtain
\begin{align}
\lVert u^n(t)\rVert_{L^2}^2
+
\lVert B^n(t)\rVert_{L^2}^2
+&
2\nu
\int_0^t
\lVert \partial_1 u^n(s)\rVert_{L^2}^2\,ds
\nonumber\\
&+
2\eta
\int_0^t
\lVert \nabla B^n(s)\rVert_{L^2}^2\,ds
\nonumber\\
&=
\lVert P_nu_0\rVert_{L^2}^2
+
\lVert P_nB_0\rVert_{L^2}^2
-2\int_0^t
\bigl(
(u^n(s)\cdot\nabla)u^n(s),
u^n(s)
\bigr)\,ds
\nonumber\\
&
-
2\int_0^t
\bigl(
(u^n(s)\cdot\nabla)B^n(s),
B^n(s)
\bigr)\,ds
\nonumber\\
&
+
2\int_0^t
\bigl(
(B^n(s)\cdot\nabla)B^n(s),
u^n(s)
\bigr)\,ds
\nonumber\\
&
+
2\int_0^t
\bigl(
(B^n(s)\cdot\nabla)u^n(s),
B^n(s)
\bigr)\,ds
\nonumber\\
&
+
2\int_0^t
\bigl(
u^n(s),
P_n\sigma_1(s,u^n(s),B^n(s))\Pi_n^{(1)}\,dW_1(s)
\bigr)
\nonumber\\
&
+
2\int_0^t
\bigl(
B^n(s),
P_n\sigma_2(s,u^n(s),B^n(s))\Pi_n^{(2)}\,dW_2(s)
\bigr)
\nonumber\\
&
+
\int_0^t
\lVert P_n\sigma_1(s,u^n(s),B^n(s))\Pi_n^{(1)}\rVert_{L_2(\mathfrak U_1;L^2)}^2\,ds
\nonumber\\
&
+
\int_0^t
\lVert P_n\sigma_2(s,u^n(s),B^n(s))\Pi_n^{(2)}\rVert_{L_2(\mathfrak U_2;L^2)}^2\,ds.
\label{L2-Ito}
\end{align}
Using $\nabla\cdot B^n=0$ and
$$
\bigl(
(B^n\cdot\nabla)B^n,u^n
\bigr)=
-\bigl(
(B^n\cdot\nabla)u^n,B^n
\bigr),
$$
it follows from \eqref{L2-Ito} that
\begin{align}
&\lVert u^n(t)\rVert_{L^2}^2
+
\lVert B^n(t)\rVert_{L^2}^2
+
2\nu
\int_0^t
\lVert \partial_1u^n(s)\rVert_{L^2}^2\,ds
+
2\eta
\int_0^t
\lVert \nabla B^n(s)\rVert_{L^2}^2\,ds
\nonumber\\
&=
\lVert P_nu_0\rVert_{L^2}^2
+
\lVert P_nB_0\rVert_{L^2}^2
+
2\int_0^t
\bigl(
u^n(s),
P_n\sigma_1(s,u^n(s),B^n(s))\Pi_n^{(1)}\,dW_1(s)
\bigr)
\nonumber\\
&
+
2\int_0^t
\bigl(
B^n(s),
P_n\sigma_2(s,u^n(s),B^n(s))\Pi_n^{(2)}\,dW_2(s)
\bigr)
+
\int_0^t
\lVert P_n\sigma_1(s,u^n(s),B^n(s))\Pi_n^{(1)}\rVert_{L_2(\mathfrak U_1;L^2)}^2\,ds
\nonumber\\
&
+
\int_0^t
\lVert P_n\sigma_2(s,u^n(s),B^n(s))\Pi_n^{(2)}\rVert_{L_2(\mathfrak U_2;L^2)}^2\,ds.
\label{L2-id}
\end{align}
It follows from \eqref{nL2} that
\begin{align}
&\lVert P_n\sigma_1(t,u^n,B^n)\Pi_n^{(1)}\rVert_{L_2(\mathfrak U_1;L^2)}^2
+
\lVert P_n\sigma_2(t,u^n,B^n)\Pi_n^{(2)}\rVert_{L_2(\mathfrak U_2;L^2)}^2
\nonumber\\
&\qquad\le
K_0
+
K_1
\left(
\lVert u^n(t)\rVert_{L^2}^2
+
\lVert B^n(t)\rVert_{L^2}^2
\right)
\nonumber\\
&\qquad\quad
+
K_2
\left(
\nu\lVert \partial_1u^n(t)\rVert_{L^2}^2
+
\eta\lVert \nabla B^n(t)\rVert_{L^2}^2
\right).
\label{L2A1}
\end{align}
Applying Burkholder-Davis-Gundy inequality, substituting \eqref{L2A1} into \eqref{L2-id}, 
 applying Young's inequality, it follows from \eqref{L2-id} that
\begin{align}
&(1-\beta)
\mathbb E
\sup_{0\le r\le t}
\left(
\lVert u^n(r)\rVert_{L^2}^2
+
\lVert B^n(r)\rVert_{L^2}^2
\right)
\nonumber\\
&\quad
+
\left[
2-
\left(
1+\frac{9}{\beta}
\right)K_2
\right]
\mathbb E
\int_0^t
\left(
\nu\lVert \partial_1u^n(s)\rVert_{L^2}^2
+
\eta\lVert \nabla B^n(s)\rVert_{L^2}^2
\right)\,ds
\nonumber\\
&\le
\lVert P_nu_0\rVert_{L^2}^2
+
\lVert P_nB_0\rVert_{L^2}^2
+
\left(
1+\frac{9}{\beta}
\right)K_0t
\nonumber\\
&\quad
+
\left(
1+\frac{9}{\beta}
\right)K_1
\int_0^t
\mathbb E
\left(
\lVert u^n(s)\rVert_{L^2}^2
+
\lVert B^n(s)\rVert_{L^2}^2
\right)\,ds.
\label{L2A2}
\end{align}
Since the dissipation term in
\eqref{L2A2}
is nonnegative, we first drop it and in addition, 
 since
$
\lVert P_nu_0\rVert_{L^2}
\le
\lVert u_0\rVert_{L^2},$
and
$\lVert P_nB_0\rVert_{L^2}
\le
\lVert B_0\rVert_{L^2},
$
therefore,
\eqref{L2A2}
implies
\begin{align}
&(1-\beta)
\mathbb E
\sup_{0\le r\le t}
\left(
\lVert u^n(r)\rVert_{L^2}^2
+
\lVert B^n(r)\rVert_{L^2}^2
\right)
\nonumber\\
&\le
\lVert u_0\rVert_{L^2}^2
+
\lVert B_0\rVert_{L^2}^2
+
\left(
1+\frac{9}{\beta}
\right)K_0T
\nonumber\\
&\quad
+
\left(
1+\frac{9}{\beta}
\right)K_1
\int_0^t
\mathbb E
\left(
\lVert u^n(s)\rVert_{L^2}^2
+
\lVert B^n(s)\rVert_{L^2}^2
\right)\,ds.
\label{L2-G1}
\end{align}
Applying Gronwall's inequality,
we conclude that
\begin{align}
\mathbb E
\sup_{0\le t\le T}
\left(
\lVert u^n(t)\rVert_{L^2}^2
+
\lVert B^n(t)\rVert_{L^2}^2
\right)
\le
C_T
\left(
1+
\lVert u_0\rVert_{L^2}^2
+
\lVert B_0\rVert_{L^2}^2
\right),
\label{L2-sup}
\end{align}
where $C_T>0$ depends only on
$K_0,K_1,K_2,\nu,\eta,T$,
through the fixed choice of
$\beta$,
and is independent of $n$.
This  completes the proof.
\end{proof}
\noindent
\noindent
We next establish a uniform $L^4(\Omega)$-estimate for the Galerkin
approximations, corresponding to the $L^2$-energy estimate obtained in
Lemma~\ref{ELY6}.

\begin{lemma}
\label{lemma-fourth-moment-L2}
Let $T>0$, and under the hypothesis of 
\begin{align}
&\mathbb E
\sup_{0\le t\le T}
\left(
\lVert u^n(t)\rVert_{L^2}^2
+
\lVert B^n(t)\rVert_{L^2}^2
\right)^2
+
\mathbb E
\int_0^T
\left(
\lVert u^n(t)\rVert_{L^2}^2
+
\lVert B^n(t)\rVert_{L^2}^2
\right)
\nonumber\\
&\qquad\qquad\qquad\qquad\qquad\qquad\qquad\qquad \qquad\qquad\qquad\times
\left(
\nu\lVert \partial_1u^n(t)\rVert_{L^2}^2
+
\eta\lVert \nabla B^n(t)\rVert_{L^2}^2
\right)\,dt
\le
C_T.
\label{FL2}
\end{align}
The constant $C_T$ depends only on
$T$, $K_0$, $K_1$, $K_2$, $\nu$, $\eta$, and  is independent of $n$.
\end{lemma}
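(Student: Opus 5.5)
The plan is to apply It\^o's formula to the square of the energy. Set $Y_n(t):=\lVert u^n(t)\rVert_{L^2}^2+\lVert B^n(t)\rVert_{L^2}^2$ and $D_n(t):=\nu\lVert\partial_1u^n(t)\rVert_{L^2}^2+\eta\lVert\nabla B^n(t)\rVert_{L^2}^2$. By \eqref{L2-id}, the cancellations of the nonlinear terms give
$$
dY_n+2D_n\,dt=dM_n+S_n\,dt ,
$$
where $M_n$ is the sum of the two stochastic integrals and $S_n$ is the sum of the two Hilbert--Schmidt norms. Applying It\^o's formula to $\varphi(Y)=Y^2$ yields
$$
dY_n^2+4Y_nD_n\,dt=2Y_nS_n\,dt+2Y_n\,dM_n+d\langle\!\langle M_n\rangle\!\rangle_t .
$$
By \eqref{LKH8} and the contractivity of $P_n,\Pi_n^{(i)}$ (see \eqref{asd1}),
$$
d\langle\!\langle M_n\rangle\!\rangle_t\le 4\bigl(\lVert\sigma_1\rVert_{L_2(\mathfrak U_1;L^2)}^2\lVert u^n\rVert_{L^2}^2+\lVert\sigma_2\rVert_{L_2(\mathfrak U_2;L^2)}^2\lVert B^n\rVert_{L^2}^2\bigr)dt\le 4Y_nS_n\,dt .
$$
The drift on the right is therefore at most $6Y_nS_n$. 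Inserting the growth bound \eqref{L2A1} gives
$$
6Y_nS_n\le 6K_0Y_n+6K_1Y_n^2+6K_2Y_nD_n .
$$

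The dissipation term $4Y_nD_n$ must absorb $6K_2Y_nD_n$ together with a further contribution from the BDG step. Before taking expectations I would localize with the stopping time $\tau_R=\inf\{t:Y_n(t)\ge R\}\wedge T$. Finite dimensionality gives $\tau_R\to T$, so the martingale is a true martingale up to $\tau_R$ and all quantities are finite; Fatou's lemma at the end removes the localization. Taking $\sup_{r\le t\wedge\tau_R}$ and then expectations, I would estimate the stochastic term by Burkholder--Davis--Gundy:
$$
2\,\mathbb E\sup_r\Bigl|\int_0^r Y_n\,dM_n\Bigr|\le C\,\mathbb E\Bigl(\int_0^t Y_n^2\cdot 4Y_nS_n\,ds\Bigr)^{1/2}\le \beta\,\mathbb E\sup_r Y_n^2+\frac{C}{\beta}\,\mathbb E\int_0^tY_nS_n\,ds .
$$
With $\beta$ small this leaves the total coefficient of $\mathbb E\int Y_nD_n$ equal to $4-(6+C/\beta)K_2$. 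This is where the smallness $K_2<1/5$ must enter, exactly as the factor $2-(1+9/\beta)K_2$ did in \eqref{L2A2}.

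This absorption is the step I expect to be the main obstacle. A naive BDG constant gives too large a coefficient for $K_2<1/5$ to suffice. If that happens, I would do a two-stage argument instead. First, take expectations without the supremum, where the martingale term vanishes. This gives
$$
(4-6K_2)\,\mathbb E\int_0^tY_nD_n\,ds\le \mathbb E\,Y_n(0)^2+C\int_0^t\bigl(1+\mathbb EY_n^2\bigr)\,ds ,
$$
with $4-6K_2>0$ since $K_2<1/5$. Gronwall then bounds $\sup_t\mathbb EY_n^2$ and $\mathbb E\int Y_nD_n$ uniformly in $n$. Second, return to the supremum estimate. There the BDG term is bounded by $\beta\,\mathbb E\sup Y_n^2$ plus $C_\beta\,\mathbb E\int Y_nS_n$. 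The latter is controlled by $K_0$, $K_1$, $K_2$ times the quantities already bounded in the first stage, so no absorption into the dissipation is needed.

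Using $\mathbb EY_n(0)^2\le(\lVert u_0\rVert_{L^2}^2+\lVert B_0\rVert_{L^2}^2)^2$ and $Y_n\le 1+Y_n^2$ to handle the $K_0$ term, I would close with Gronwall in $t$. The resulting constant depends only on $T,K_0,K_1,K_2,\nu,\eta$ and the initial data, which gives \eqref{FL2}.
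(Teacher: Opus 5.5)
Your argument is correct, but at the decisive step it takes a different route from the paper.

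The paper carries out exactly your first plan. It applies It\^o's formula to $Y_n^2$, uses the Hilbert--Schmidt bound to get drift at most $6Y_nS_n$, then applies BDG with constant $3$ and Young with a parameter $\gamma<1$. This gives the coefficient $4-(6+36/\gamma)K_2$ in front of $\mathbb E\int_0^t Y_nD_n\,ds$, which the paper drops as nonnegative before running Gronwall on $\mathbb E\sup Y_n^2$.

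Your suspicion about this step is well founded. For $\gamma<1$ that coefficient is positive only if $K_2<4\gamma/(6\gamma+36)<2/21$. So the standing assumption $K_2<1/5$ does not justify it; that threshold is exactly the one for the factor $2-(1+9/\beta)K_2$ in the $L^2$ lemma. The paper also moves $\gamma\,\mathbb E\sup Y_n^2$ to the left without any localization guaranteeing that this quantity is finite.

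Your two-stage route fixes both points.
\begin{enumerate}
\item At a fixed time the stopped martingale has zero mean. So only $4-6K_2>0$ is needed to bound $\sup_{t\le T}\mathbb E Y_n^2$ and $\mathbb E\int_0^T Y_nD_n\,dt$.
\item In the supremum estimate, the term $K_2\,\mathbb E\int Y_nD_n$ produced by BDG is then an already-bounded quantity, not something that must be absorbed into the dissipation.
\item The stopping time $\tau_R$ makes the absorption of $\beta\,\mathbb E\sup Y_n^2$ legitimate. Finite-dimensionality bounds $D_n$ by a multiple of $Y_n$, hence bounds $S_n$ before $\tau_R$. Fatou and monotone convergence then remove the localization.
\end{enumerate}

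The gain is that your version proves the lemma under the stated hypothesis, and for this lemma alone it works for any $K_2<2/3$. The paper's one-shot absorption, with its constants, only covers $K_2<2/21$.
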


\begin{proof}
Applying It\^o's formula to
$(
\lVert u^n(t)\rVert_{L^2}^2
+
\lVert B^n(t)\rVert_{L^2}^2)^2,
$ and using
$$
d\Big[
\lVert u^n\rVert_{L^2}^2,\,
\lVert B^n\rVert_{L^2}^2
\Big](t)
=
0,$$ 
$$
d\Big[
\lVert u^n\rVert_{L^2}^2,\,
\lVert u^n\rVert_{L^2}^2
\Big](t)
=
4
\left\lVert 
\left(
P_n\sigma_1(t,u^n,B^n)\Pi_n^{(1)}
\right)^*u^n(t)
\right\rVert_{\mathfrak U_1}^2\,dt,
$$
$$
d\Big[
\lVert B^n\rVert_{L^2}^2,\,
\lVert B^n\rVert_{L^2}^2
\Big](t)
=
4
\left\lVert 
\left(
P_n\sigma_2(t,u^n,B^n)\Pi_n^{(2)}
\right)^*B^n(t)
\right\rVert_{\mathfrak U_2}^2\,dt,
$$
we obtain from \eqref{gVT}-\eqref{GVL} that
\begin{align}
&d
\left(
\lVert u^n(t)\rVert_{L^2}^2
+
\lVert B^n(t)\rVert_{L^2}^2
\right)^2
\nonumber\\
&\quad
+
4
\left(
\lVert u^n(t)\rVert_{L^2}^2
+
\lVert B^n(t)\rVert_{L^2}^2
\right)
\left(
\nu\lVert \partial_1u^n(t)\rVert_{L^2}^2
+
\eta\lVert \nabla B^n(t)\rVert_{L^2}^2
\right)\,dt
\nonumber\\
&=
2
\left(
\lVert u^n(t)\rVert_{L^2}^2
+
\lVert B^n(t)\rVert_{L^2}^2
\right)
\nonumber\\
&\qquad\times
\left[
\lVert P_n\sigma_1(t,u^n,B^n)\Pi_n^{(1)}\rVert_{L_2(\mathfrak U_1;L^2)}^2
+
\lVert P_n\sigma_2(t,u^n,B^n)\Pi_n^{(2)}\rVert_{L_2(\mathfrak U_2;L^2)}^2
\right]dt
\nonumber\\
&\quad
+
4
\left(
\lVert u^n(t)\rVert_{L^2}^2
+
\lVert B^n(t)\rVert_{L^2}^2
\right)
\left(
u^n(t),
P_n\sigma_1(t,u^n,B^n)\Pi_n^{(1)}\,dW_1(t)
\right)
\nonumber\\
&\quad
+
4
\left(
\lVert u^n(t)\rVert_{L^2}^2
+
\lVert B^n(t)\rVert_{L^2}^2
\right)
\left(
B^n(t),
P_n\sigma_2(t,u^n,B^n)\Pi_n^{(2)}\,dW_2(t)
\right)
\nonumber\\
&\quad
+
4
\left\lVert 
\left(
P_n\sigma_1(t,u^n,B^n)\Pi_n^{(1)}
\right)^*u^n(t)
\right\rVert_{\mathfrak U_1}^2\,dt
+
4
\left\lVert 
\left(
P_n\sigma_2(t,u^n,B^n)\Pi_n^{(2)}
\right)^*B^n(t)
\right\rVert_{\mathfrak U_2}^2\,dt.
\label{4df}
\end{align}
By the Hilbert-Schmidt inequality \eqref{LKH8}, we obtain
\begin{align}
&4
\lVert 
\left(
P_n\sigma_1(t,u^n,B^n)\Pi_n^{(1)}
\right)^*u^n(t)
\rVert_{\mathfrak U_1}^2
\nonumber\\
&\quad
+
4
\lVert
\left(
P_n\sigma_2(t,u^n,B^n)\Pi_n^{(2)}
\right)^*B^n(t)
\rVert_{\mathfrak U_2}^2
\nonumber\\
&\le
4\lVert u^n(t)\rVert_{L^2}^2
\lvert P_n\sigma_1(t,u^n,B^n)\rVert_{L_2(\mathfrak U_1;L^2)}^2
+
4\lVert B^n(t)\rVert_{L^2}^2
\lvert P_n\sigma_2(t,u^n,B^n)\rVert_{L_2(\mathfrak U_2;L^2)}^2
\nonumber\\
&\le
4
\left(
\lVert u^n(t)\rVert_{L^2}^2
+
\lVert B^n(t)\rVert_{L^2}^2
\right)
\nonumber\\
&\qquad\times
\left[
\lvert P_n\sigma_1(t,u^n,B^n)\rVert_{L_2(\mathfrak U_1;L^2)}^2
+
\lVert P_n\sigma_2(t,u^n,B^n)\rVert_{L_2(\mathfrak U_2;L^2)}^2
\right].
\label{zx12}
\end{align}
It follows from \eqref{4df} and
\eqref{zx12} that
\begin{align}
&d
\left(
\lVert u^n(t)\rVert_{L^2}^2
+
\lVert B^n(t)\rVert_{L^2}^2
\right)^2
\nonumber\\
&\quad
+
4
\left(
\lVert u^n(t)\rVert_{L^2}^2
+
\lVert B^n(t)\rVert_{L^2}^2
\right)
\left(
\nu\lVert \partial_1u^n(t)\rVert_{L^2}^2
+
\eta\lVert \nabla B^n(t)\rVert_{L^2}^2
\right)\,dt
\nonumber\\
&\le
6
\left(
\lVert u^n(t)\rVert_{L^2}^2
+
\lVert B^n(t)\rVert_{L^2}^2
\right)
\nonumber\\
&\qquad\times
\left[
\lVert P_n\sigma_1(t,u^n,B^n)\rVert_{L_2(\mathfrak U_1;L^2)}^2
+
\lVert P_n\sigma_2(t,u^n,B^n)\rVert_{L_2(\mathfrak U_2;L^2)}^2
\right]dt
\nonumber\\
&\quad
+
4
\left(
\lVert u^n(t)\rVert_{L^2}^2
+
\lVert B^n(t)\rVert_{L^2}^2
\right)
\times
\left(
u^n(t),
P_n\sigma_1(t,u^n,B^n)\Pi_n^{(1)}\,dW_1(t)
\right)
\nonumber\\
&\quad
+
4
\left(
\lVert u^n(t)\rVert_{L^2}^2
+
\lVert B^n(t)\rVert_{L^2}^2
\right)\times
\left(
B^n(t),
P_n\sigma_2(t,u^n,B^n)\Pi_n^{(2)}\,dW_2(t)
\right).
\label{OPLR}
\end{align}
Applying \eqref{nL2} into
\eqref{OPLR}, we obtain
\begin{align}
&d
\left(
\lVert u^n(t)\rVert_{L^2}^2
+
\lVert B^n(t)\rVert_{L^2}^2
\right)^2
\nonumber\\
&\quad
+
\left(
4-6 K_2
\right)
\left(
\lVert u^n(t)\rVert_{L^2}^2
+
\lVert B^n(t)\rVert_{L^2}^2
\right)
\nonumber\\
&\qquad\times
\left(
\nu\lVert \partial_1u^n(t)\rVert_{L^2}^2
+
\eta\lVert \nabla B^n(t)\rVert_{L^2}^2
\right)\,dt
\nonumber\\
&\le
6K_0
\left(
\lVert u^n(t)\rVert_{L^2}^2
+
\lVert B^n(t)\rVert_{L^2}^2
\right)\,dt
+
6K_1
\left(
\lVert u^n(t)\rVert_{L^2}^2
+
\lVert B^n(t)\rVert_{L^2}^2
\right)^2\,dt
\nonumber\\
&\quad
+
4
\left(
\lVert u^n(t)\rVert_{L^2}^2
+
\lVert B^n(t)\rVert_{L^2}^2
\right)
\times
\left(
u^n(t),
P_n\sigma_1(t,u^n,B^n)\Pi_n^{(1)}\,dW_1(t)
\right)
\nonumber\\
&\quad
+
4
\left(
\lVert u^n(t)\rVert_{L^2}^2
+
\lVert B^n(t)\rVert_{L^2}^2
\right)
\times
\left(
B^n(t),
P_n\sigma_2(t,u^n,B^n)\Pi_n^{(2)}\,dW_2(t)
\right).
\label{kl23x}
\end{align}
Integrating \eqref{kl23x} over
$[0,t]$, we obtain
\begin{align}
&
\left(
\lVert u^n(t)\rVert_{L^2}^2
+
\lVert B^n(t)\rVert_{L^2}^2
\right)^2
\nonumber\\
&\quad
+
\left(
4-6\widetilde K_2
\right)
\int_0^t
\left(
\lVert u^n(s)\rVert_{L^2}^2
+
\lVert B^n(s)\rVert_{L^2}^2
\right)
\nonumber\\
&\qquad\qquad\times
\left(
\nu\lVert \partial_1u^n(s)\rVert_{L^2}^2
+
\eta\lVert \nabla B^n(s)\rVert_{L^2}^2
\right)\,ds
\nonumber\\
&\le
\left(
\lVert P_nu_0\rVert_{L^2}^2
+
\lVert P_nB_0\rVert_{L^2}^2
\right)^2
\nonumber\\
&\quad
+
6K_0
\int_0^t
\left(
\lVert u^n(s)\rVert_{L^2}^2
+
\lVert B^n(s)\rVert_{L^2}^2
\right)\,ds
\nonumber\\
&\quad
+
6K_1
\int_0^t
\left(
\lVert u^n(s)\rVert_{L^2}^2
+
\lVert B^n(s)\rVert_{L^2}^2
\right)^2\,ds
\nonumber\\
&\quad
+
4
\int_0^t
\left(
\lVert u^n(s)\rVert_{L^2}^2
+
\lVert B^n(s)\rVert_{L^2}^2
\right)
\left(
u^n(s),
P_n\sigma_1(s,u^n,B^n)\Pi_n^{(1)}\,dW_1(s)
\right)
\nonumber\\
&\quad
+
4
\int_0^t
\left(
\lVert u^n(s)\rVert_{L^2}^2
+
\lVert B^n(s)\rVert_{L^2}^2
\right)
\left(
B^n(s),
P_n\sigma_2(s,u^n,B^n)\Pi_n^{(2)}\,dW_2(s)
\right).
\label{B23dg2}
\end{align}
Taking the supremum over $r\in[0,t]$, taking expectations, and applying
the Burkholder-Davis-Gundy inequality on \eqref{B23dg2},to obtain
\begin{align}
&\mathbb E
\sup_{0\le r\le t}
\left(
\lVert u^n(r)\rVert_{L^2}^2
+
\lVert B^n(r)\rVert_{L^2}^2
\right)^2
\nonumber\\
&\quad
+
\left(
4-6K_2
\right)
\mathbb E
\int_0^t
\left(
\lVert u^n(s)\rVert_{L^2}^2
+
\lVert B^n(s)\rVert_{L^2}^2
\right)
\nonumber\\
&\qquad\qquad\times
\left(
\nu\lVert \partial_1u^n(s)\rVert_{L^2}^2
+
\eta\lVert \nabla B^n(s)\rVert_{L^2}^2
\right)\,ds
\nonumber\\
&\le
\mathbb E
\left(
\lVert P_nu_0\rVert_{L^2}^2
+
\lVert P_nB_0\rVert_{L^2}^2
\right)^2
\nonumber\\
&\quad
+
6K_0
\mathbb E
\int_0^t
\left(
\lVert u^n(s)\rVert_{L^2}^2
+
\lVert B^n(s)\rVert_{L^2}^2
\right)\,ds
+
6K_1
\mathbb E
\int_0^t
\left(
\lVert u^n(s)\rVert_{L^2}^2
+
\lVert B^n(s)\rVert_{L^2}^2
\right)^2\,ds
\nonumber\\
&\quad
+
12\,
\mathbb E
\Bigg[
\int_0^t
\left(
\lVert u^n(s)\rVert_{L^2}^2
+
\lVert B^n(s)\rVert_{L^2}^2
\right)^2
\times
\Bigg(
\left\lVert 
\left(
P_n\sigma_1(s,u^n,B^n)\Pi_n^{(1)}
\right)^*u^n(s)
\right\rVert_{\mathfrak U_1}^2
\nonumber\\
&\hspace{5.5cm}
+
\left\lVert 
\left(
P_n\sigma_2(s,u^n,B^n)\Pi_n^{(2)}
\right)^*B^n(s)
\right\rVert_{\mathfrak U_2}^2
\Bigg)\,ds
\Bigg]^{1/2}.
\label{fm-BDG}
\end{align}
Using Hilbert-Schmidt inequality,   \eqref{nL2}, and Young's inequality, we obtain
\begin{align}
&12\,
\mathbb E
\Bigg[
\sup_{0\le r\le t}
\left(
\lVert u^n(r)\rVert_{L^2}^2
+
\lVert B^n(r)\rVert_{L^2}^2
\right)^2
\Bigg]^{1/2}
\nonumber\\
&\quad\times
\Bigg[
\int_0^t
\left(
\lVert u^n(s)\rVert_{L^2}^2
+
\lVert B^n(s)\rVert_{L^2}^2
\right)
\nonumber\\
&\qquad\qquad\times
\left[
\lVert P_n\sigma_1(s,u^n,B^n)\rVert_{L_2(\mathfrak U_1;L^2)}^2
+
\lVert P_n\sigma_2(s,u^n,B^n)\rVert_{L_2(\mathfrak U_2;L^2)}^2
\right]\,ds
\Bigg]^{1/2}
\nonumber\\
&\le
\gamma
\mathbb E
\sup_{0\le r\le t}
\left(
\lVert u^n(r)\rVert_{L^2}^2
+
\lVert B^n(r)\rVert_{L^2}^2
\right)^2
+
\frac{36K_0}{\gamma}
\mathbb E
\int_0^t
\left(
\lVert u^n(s)\rVert_{L^2}^2
+
\lVert B^n(s)\rVert_{L^2}^2
\right)\,ds
\nonumber\\
&\quad
+
\frac{36K_1}{\gamma}
\mathbb E
\int_0^t
\left(
\lVert u^n(s)\rVert_{L^2}^2
+
\lVert B^n(s)\rVert_{L^2}^2
\right)^2\,ds
\nonumber\\
& \quad+
\frac{36K_2}{\gamma}
\mathbb E
\int_0^t
\left(
\lVert u^n(s)\rVert_{L^2}^2
+
\lVert B^n(s)\rVert_{L^2}^2
\right)\times
\left(
\nu\lVert \partial_1u^n(s)\rVert_{L^2}^2
+
\eta\lVert \nabla B^n(s)\rVert_{L^2}^2
\right)\,ds.
\label{fm-Young}
\end{align}
Applying Tonelli's Theorem, and combining \eqref{fm-BDG} and \eqref{fm-Young}, we find
\begin{align}
&(1-\gamma)
\mathbb E
\sup_{0\le r\le t}
\left(
\lVert u^n(r)\rVert_{L^2}^2
+
\lVert B^n(r)\rVert_{L^2}^2
\right)^2
\nonumber\\
&\quad
+
\left[
4-
\left(
6+\frac{36}{\gamma}
\right)K_2
\right]
\mathbb E
\int_0^t
\left(
\lVert u^n(s)\rVert_{L^2}^2
+
\lVert B^n(s)\rVert_{L^2}^2
\right)
\nonumber\\
&\qquad\qquad\times
\left(
\nu\lVert \partial_1u^n(s)\rVert_{L^2}^2
+
\eta\lVert \nabla B^n(s)\rVert_{L^2}^2
\right)\,ds
\nonumber\\
&\le
\mathbb E
\left(
\lVert P_nu_0\rVert_{L^2}^2
+
\lVert P_nB_0\rVert_{L^2}^2
\right)^2
+
\left(
6+\frac{36}{\gamma}
\right)K_0
\mathbb E
\int_0^t
\left(
\lVert u^n(s)\rVert_{L^2}^2
+
\lVert B^n(s)\rVert_{L^2}^2
\right)\,ds
\nonumber\\
&\quad
+
\left(
6+\frac{36}{\gamma}
\right)K_1
\int_0^t
\mathbb E
\sup_{0\le r\le s}
\left(
\lVert u^n(r)\rVert_{L^2}^2
+
\lVert B^n(r)\rVert_{L^2}^2
\right)^2\,ds.
\label{fm-main}
\end{align}
Since
$
\lVert P_nu_0\rVert_{L^2}\le \lVert u_0\rVert_{L^2},
\lVert P_nB_0\rVert_{L^2}\le \lVert B_0\rVert_{L^2}$, using the previously established
$L^2$-estimate \eqref{L2-sup}, and dropping the nonnegative weighted
dissipation term from the left-hand side of \eqref{fm-main}, we obtain
\begin{align}
&\mathbb E
\sup_{0\le r\le t}
\left(
\lVert u^n(r)\rVert_{L^2}^2
+
\lVert B^n(r)\rVert_{L^2}^2
\right)^2
\le
C_T
+
C
\int_0^t
\mathbb E
\sup_{0\le r\le s}
\left(
\lVert u^n(r)\rVert_{L^2}^2
+
\lVert B^n(r)\rVert_{L^2}^2
\right)^2\,ds,
\label{fm-Gronwall}
\end{align}
where $C_T$ and $C$ are independent of $n$.
Therefore, Gronwall's inequality yields
\begin{align}
\sup_{n\in\mathbb N}
\mathbb E
\sup_{0\le t\le T}
\left(
\lVert u^n(t)\rVert_{L^2}^2
+
\lVert B^n(t)\rVert_{L^2}^2
\right)^2
\le
C_T.
\label{L^4-mo}
\end{align}
\end{proof}
\noindent We next derive uniform $H^{0,1}$-estimates for the Galerkin approximations.
\begin{lemma}
\label{lem-vertical-estimate}
Let $T>0$. 
Then there exists a constant $C_T>0$, independent of $n$, such that
\begin{align}
&\mathbb E
\sup_{0\le t\le T}
\exp\left\{
-\int_0^t
\left[
C_{\alpha,\nu,\eta}
\left(
1
+
\lVert \partial_1u^n(s)\rVert_{L^2}
+
\lVert \partial_1u^n(s)\rVert_{L^2}^2
\right.
\right.
\right.
\nonumber\\
&\hspace{5.4cm}\left.\left.\left.
+
\lVert \nabla B^n(s)\rVert_{L^2}
+
\lVert \nabla B^n(s)\rVert_{L^2}^2
\right)
+
\widetilde K_1
\right]ds
\right\}
\nonumber\\
&\qquad\times
\left(
\lVert \partial_2u^n(t)\rVert_{L^2}^2
+
\lVert \partial_2B^n(t)\rVert_{L^2}^2
\right)
\nonumber\\
&\quad
+
\mathbb E
\int_0^T
\exp\left\{
-\int_0^t
\left[
C_{\alpha,\nu,\eta}
\left(
1
+
\lVert \partial_1u^n(s)\rVert_{L^2}
+
\lVert \partial_1u^n(s)\rVert_{L^2}^2
\right.
\right.
\right.
\nonumber\\
&\hspace{5.4cm}\left.\left.\left.
+
\lVert \nabla B^n(s)\rVert_{L^2}
+
\lVert \nabla B^n(s)\rVert_{L^2}^2
\right)
+
\widetilde K_1
\right]ds
\right\}
\nonumber\\
&\qquad\times
\left(
\nu\lVert \partial_1\partial_2u^n(t)\rVert_{L^2}^2
+
\eta\lVert \nabla\partial_2B^n(t)\rVert_{L^2}^2
\right)\,dt
\le
C_T.
\label{vert-estimate}
\end{align}
The constant $C_T$ depends on
$
T,
\nu,
\eta,
\widetilde K_0,
\widetilde K_1,
\widetilde K_2,
\alpha,$
and is independent of $n$.
\end{lemma}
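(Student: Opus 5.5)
We apply It\^o's formula to $\lVert \partial_2u^n\rVert_{L^2}^2+\lVert \partial_2B^n\rVert_{L^2}^2$ multiplied by the stochastic integrating factor
\[
\rho_n(t):=\exp\Big\{-\int_0^t\Big[C_{\alpha,\nu,\eta}\big(1+\lVert \partial_1u^n\rVert_{L^2}+\lVert \partial_1u^n\rVert_{L^2}^2+\lVert \nabla B^n\rVert_{L^2}+\lVert \nabla B^n\rVert_{L^2}^2\big)+\widetilde K_1\Big]ds\Big\}.
\]
This is the stochastic analogue of the Gr\"onwall argument in the deterministic lemma preceding Theorem~\ref{thm10}. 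Since $\rho_n$ is a finite-variation, adapted process, the product rule gives
\[
d\big(\rho_n Y_n\big)=\rho_n\,dY_n-\rho_n\Big[C_{\alpha,\nu,\eta}(\cdots)+\widetilde K_1\Big]Y_n\,dt,
\qquad Y_n:=\lVert \partial_2u^n\rVert_{L^2}^2+\lVert \partial_2B^n\rVert_{L^2}^2 .
\]
For $dY_n$ we use \eqref{gVT}-\eqref{GVL}. Because $\partial_2^2v\in H_n$ for $v\in H_n$ and $P_n=P_n^{0,1}$, the projection drops out of the drift pairings. The drift is then exactly the deterministic computation: $-2\nu\lVert \partial_1\partial_2u^n\rVert^2-2\eta\lVert \nabla\partial_2B^n\rVert^2$ plus the nonlinear terms $I_1,\dots,I_4$. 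We estimate these verbatim as in \eqref{A5}-\eqref{A9}, using \eqref{Lia}, with Young's parameter $\alpha$ chosen so that they absorb only a fraction $\alpha$ of the dissipation. The remainder is bounded by $C_{\alpha,\nu,\eta}(\lVert \partial_1u^n\rVert+\lVert \partial_1u^n\rVert^2+\lVert \nabla B^n\rVert+\lVert \nabla B^n\rVert^2)Y_n$. One term in \eqref{A9} carries the factor $\lVert \partial_2B\rVert^2$, which is dominated by $\lVert \nabla B^n\rVert^2$, so it fits the same form. The extra constant $1$ is there to absorb lower-order pieces such as $Y_n$ itself.

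The It\^o correction equals $\lVert \partial_2P_n\sigma_1\Pi_n^{(1)}\rVert_{L_2}^2+\lVert \partial_2P_n\sigma_2\Pi_n^{(2)}\rVert_{L_2}^2$. By \eqref{nH01} it is at most $\widetilde K_0+\widetilde K_1(\lVert u^n\rVert_{H^{0,1}}^2+\lVert B^n\rVert_{H^{0,1}}^2)+\widetilde K_2(\nu\lVert \partial_1\partial_2u^n\rVert^2+\eta\lVert \nabla\partial_2B^n\rVert^2)$. The $\widetilde K_1 Y_n$ part is cancelled by the $\widetilde K_1$ in the exponent. The $\widetilde K_1$ times $L^2$-norm part is controlled by Lemma~\ref{ELY6} together with $\rho_n\le1$. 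The $C_{\alpha,\nu,\eta}(\cdots)Y_n$ part is cancelled by the exponent. After integration in time this leaves
\[
\rho_nY_n(t)+(2-\alpha-\widetilde K_2)\int_0^t\rho_n\big(\nu\lVert \partial_1\partial_2u^n\rVert^2+\eta\lVert \nabla\partial_2B^n\rVert^2\big)ds\le Y_n(0)+C_T'+M_n(t),
\]
where
\[
M_n(t)=2\int_0^t\rho_n\big(\partial_2u^n,\partial_2P_n\sigma_1\Pi_n^{(1)}dW_1\big)+2\int_0^t\rho_n\big(\partial_2B^n,\partial_2P_n\sigma_2\Pi_n^{(2)}dW_2\big).
\]
We also have $Y_n(0)\le \lVert \partial_2u_0\rVert^2+\lVert \partial_2B_0\rVert^2$.

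Next we take the supremum in time and expectations, and apply Burkholder-Davis-Gundy to $M_n$. Its quadratic variation is bounded, via \eqref{LKH8}, by $\int\rho_n^2Y_n\,\|\sigma\|_{L_2(\cdot;H^{0,1})}^2ds$. Hence
\[
\mathbb E\sup_t|M_n|\le \beta\,\mathbb E\sup_t\rho_nY_n+\tfrac{C}{\beta}\,\mathbb E\int_0^T\rho_n\|\sigma\|^2_{L_2(\cdot;H^{0,1})}ds,
\]
using $\rho_n^2\le\rho_n$. We insert \eqref{nH01} once more. The $\widetilde K_2$ dissipation contribution now appears with the factor $(1+C/\beta)$, and the $\widetilde K_1\rho_nY_n$ term is handled by Gr\"onwall in $t$ applied to $\mathbb E\sup_{r\le t}\rho_nY_n$.

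The main obstacle is the coefficient bookkeeping. We need $2-\alpha-\widetilde K_2(1+C/\beta)>0$ with $\beta<1$, and this is where the restriction $\widetilde K_2<\frac15$ must enter. The BDG constant and the Young splitting have to be arranged as in Lemma~\ref{ELY6} (the factor $9/\beta$ there) so that the admissible range is exactly $\widetilde K_2<1/5$. If the naive constants do not yield this threshold, we would instead keep the $\widetilde K_2$ dissipation inside the exponent-weighted integral before applying BDG and adjust $\alpha,\beta$ accordingly. All remaining constants depend only on $T,\nu,\eta,\widetilde K_i,\alpha$ and the data, and not on $n$. This yields \eqref{vert-estimate}.
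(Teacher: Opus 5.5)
Your proposal is correct and follows essentially the same route as the paper: It\^o's formula for $\lVert\partial_2u^n\rVert_{L^2}^2+\lVert\partial_2B^n\rVert_{L^2}^2$, the integrating factor $e^{-h_n}$ that cancels both the $C_{\alpha,\nu,\eta}(\cdots)$ and $\widetilde K_1$ growth, Burkholder-Davis-Gundy with $\rho_n^2\le\rho_n$ and the splitting $6ab\le\beta a^2+\frac{9}{\beta}b^2$, and then Gr\"onwall on $\mathbb E\sup_{r\le t}\rho_nY_n$. Your worry about the constants is unnecessary. The dissipation coefficient $2-\alpha-(1+\frac{9}{\beta})\widetilde K_2$ (the paper's $2-2\alpha-\cdots$) is positive for $\alpha$ small and $\beta<1$ close to $1$ exactly because $\widetilde K_2<\frac15$. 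Your separate treatment of the $\widetilde K_1(\lVert u^n\rVert_{L^2}^2+\lVert B^n\rVert_{L^2}^2)$ part of \eqref{nH01} via Lemma~\ref{ELY6} is actually more careful than \eqref{tyreQ}, which silently drops that term.
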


\begin{proof}
Applying $\partial_2$ to \eqref{gVT} and \eqref{GVL}, and then applying
It\^o's formula to $\lVert \partial_2u^n(t)\rVert_{L^2}^2$ and
$\lVert \partial_2B^n(t)\rVert_{L^2}^2$, respectively, and adding the
resulting identities, we obtain
\begin{align}
&d\left(
\lVert \partial_2u^n(t)\rVert_{L^2}^2
+
\lVert \partial_2B^n(t)\rVert_{L^2}^2
\right)
+
2\nu
\lVert \partial_1\partial_2u^n(t)\rVert_{L^2}^2\,dt
\nonumber\\
&\quad
+
2\eta
\lVert \nabla\partial_2B^n(t)\rVert_{L^2}^2\,dt
\nonumber\\
&=
-2\bigl(
\partial_2((u^n\cdot\nabla)u^n),
\partial_2u^n
\bigr)\,dt
+
2\bigl(
\partial_2((B^n\cdot\nabla)B^n),
\partial_2u^n
\bigr)\,dt
\nonumber\\
&\quad
-
2\bigl(
\partial_2((u^n\cdot\nabla)B^n),
\partial_2B^n
\bigr)\,dt
+
2\bigl(
\partial_2((B^n\cdot\nabla)u^n),
\partial_2B^n
\bigr)\,dt
\nonumber\\
&\quad
+
\lVert
\partial_2P_n\sigma_1(t,u^n,B^n)\Pi_n^{(1)}
\rVert_{L_2(\mathfrak U_1;L^2)}^2\,dt
\nonumber\\
&\quad
+
\lVert
\partial_2P_n\sigma_2(t,u^n,B^n)\Pi_n^{(2)}
\rVert_{L_2(\mathfrak U_2;L^2)}^2\,dt
+
dM_n^{(2)}(t),
\label{RTlq}
\end{align}
where
\begin{align}
dM_n^{(2)}(t)
=
2\bigl(
\partial_2P_n\sigma_1(t,u^n,B^n)\Pi_n^{(1)}\,dW_1(t),
\partial_2u^n(t)
\bigr)
+
2\bigl(
\partial_2P_n\sigma_2(t,u^n,B^n)\Pi_n^{(2)}\,dW_2(t),
\partial_2B^n(t)
\bigr).
\label{ty4}
\end{align}
Using
$
\nabla\cdot u^n=0,
\,
\nabla\cdot B^n=0,
$
and
$$
\bigl(
(u^n\cdot\nabla)\partial_2u^n,
\partial_2u^n
\bigr)
=
\bigl(
(u^n\cdot\nabla)\partial_2B^n,
\partial_2B^n
\bigr)
=0,
$$
$$
\bigl(
(B^n\cdot\nabla)\partial_2B^n,
\partial_2u^n
\bigr)
+
\bigl(
(B^n\cdot\nabla)\partial_2u^n,
\partial_2B^n
\bigr)
=0,
$$
and repeating the estimates in \eqref{A5}-\eqref{A9} with Young's
inequality for $\alpha>0$, we obtain
\begin{align}
&\biggl\lvert
-2\bigl(
(\partial_2u^n\cdot\nabla)u^n,
\partial_2u^n
\bigr)
+
2\bigl(
(\partial_2B^n\cdot\nabla)B^n,
\partial_2u^n
\bigr)
\nonumber\\
&\hspace{1.5cm}
-
2\bigl(
(\partial_2u^n\cdot\nabla)B^n,
\partial_2B^n
\bigr)
+
2\bigl(
(\partial_2B^n\cdot\nabla)u^n,
\partial_2B^n
\bigr)
\biggr\rvert
\nonumber\\
&\le
2\alpha
\bigl(
\nu\lVert\partial_1\partial_2u^n\rVert_{L^2}^2
+
\eta\lVert\nabla\partial_2B^n\rVert_{L^2}^2
\bigr)
\nonumber\\
&\quad
+
C_{\alpha,\nu,\eta}
\bigl(
1
+
\lVert\partial_1u^n\rVert_{L^2}
+
\lVert\partial_1u^n\rVert_{L^2}^2
+
\lVert\nabla B^n\rVert_{L^2}
+
\lVert\nabla B^n\rVert_{L^2}^2
\bigr)
\nonumber\\
&\qquad\times
\bigl(
\lVert\partial_2u^n\rVert_{L^2}^2
+
\lVert\partial_2B^n\rVert_{L^2}^2
\bigr).
\label{iqwe}
\end{align}
Substituting \eqref{iqwe} into
\eqref{RTlq}, we obtain
\begin{align}
&d\bigl(
\lVert\partial_2u^n(t)\rVert_{L^2}^2
+
\lVert\partial_2B^n(t)\rVert_{L^2}^2
\bigr)
\nonumber\\
&\quad
+
2(1-\alpha)
\bigl(
\nu\lVert\partial_1\partial_2u^n(t)\rVert_{L^2}^2
+
\eta\lVert\nabla\partial_2B^n(t)\rVert_{L^2}^2
\bigr)\,dt
\nonumber\\
&\le
C_{\alpha,\nu,\eta}
\bigl(
1
+
\lVert\partial_1u^n(t)\rVert_{L^2}
+
\lVert\partial_1u^n(t)\rVert_{L^2}^2
+
\lVert\nabla B^n(t)\rVert_{L^2}
+
\lVert\nabla B^n(t)\rVert_{L^2}^2
\bigr)
\nonumber\\
&\qquad\times
\bigl(
\lVert\partial_2u^n(t)\rVert_{L^2}^2
+
\lVert\partial_2B^n(t)\rVert_{L^2}^2
\bigr)\,dt
\nonumber\\
&\quad
+
\lVert
\partial_2P_n\sigma_1(t,u^n,B^n)\Pi_n^{(1)}
\rVert_{L_2(\mathfrak U_1;L^2)}^2\,dt
\nonumber\\
&\quad
+
\lVert
\partial_2P_n\sigma_2(t,u^n,B^n)\Pi_n^{(2)}
\rVert_{L_2(\mathfrak U_2;L^2)}^2\,dt
+
dM_n^{(2)}(t).
\label{78AW}
\end{align}
Since $P_n$ and $\Pi_n^{(i)}$ are orthogonal projections, and
$P_n$ commutes with $\partial_2$ on the Galerkin space,
\eqref{nH01} implies
\begin{align}
&\lVert \partial_2P_n\sigma_1(t,u^n,B^n)
\Pi_n^{(1)}\rVert_{L_2(\mathfrak U_1;L^2)}^2
+
\lvert \partial_2P_n\sigma_2(t,u^n,B^n)
\Pi_n^{(2)}\rVert_{L_2(\mathfrak U_2;L^2)}^2
\nonumber\\
&\le
\widetilde K_0
+
\widetilde K_1
\left(
\lVert \partial_2u^n(t)\rVert_{L^2}^2
+
\lvert \partial_2B^n(t)\rVert_{L^2}^2
\right)
\nonumber\\
&\quad
+
\widetilde K_2
(
\nu\lvert \partial_1\partial_2u^n(t)\rVert_{L^2}^2
+
\eta\lVert \nabla\partial_2B^n(t)\rVert_{L^2}^2
).
\label{tyreQ}
\end{align}
Substituting \eqref{tyreQ} into
\eqref{78AW}, we conclude that
\begin{align}
&d\bigl(
\lVert\partial_2u^n(t)\rVert_{L^2}^2
+
\lVert\partial_2B^n(t)\rVert_{L^2}^2
\bigr)
\nonumber\\
&\quad
+
\bigl(
2-2\alpha-\widetilde K_2
\bigr)
\bigl(
\nu\lVert\partial_1\partial_2u^n(t)\rVert_{L^2}^2
+
\eta\lVert\nabla\partial_2B^n(t)\rVert_{L^2}^2
\bigr)\,dt
\nonumber\\
&\le
\biggl[
C_{\alpha,\nu,\eta}
\bigl(
1
+
\lVert\partial_1u^n(t)\rVert_{L^2}
+
\lVert\partial_1u^n(t)\rVert_{L^2}^2
+
\lVert\nabla B^n(t)\rVert_{L^2}
+
\lVert\nabla B^n(t)\rVert_{L^2}^2
\bigr)
+
\widetilde K_1
\biggr]
\nonumber\\
&\qquad\times
\bigl(
\lVert\partial_2u^n(t)\rVert_{L^2}^2
+
\lVert\partial_2B^n(t)\rVert_{L^2}^2
\bigr)\,dt
+
\widetilde K_0\,dt
+
dM_n^{(2)}(t).
\label{67W}
\end{align}
Define
$$
h_n(t)
:=
\int_0^t
\Bigg[
C_{\alpha,\nu,\eta}
\Big(
1
+
\lVert \partial_1u^n(s)\rVert_{L^2}
+
\lVert \partial_1u^n(s)\rVert_{L^2}^2
+
\lVert \nabla B^n(s)\rVert_{L^2}
+
\lVert \nabla B^n(s)\rVert_{L^2}^2
\Big)
+
\widetilde K_1
\Bigg]\,ds,
$$ and 
$$
h_n'(t)=
C_{\alpha,\nu,\eta}
\Big(
1
+
\lVert \partial_1u^n(t)\rVert_{L^2}
+
\lVert \partial_1u^n(t)\rVert_{L^2}^2
+
\lVert \nabla B^n(t)\rVert_{L^2}
+
\lVert \nabla B^n(t)\rVert_{L^2}^2
\Big)
+
\widetilde K_1,$$
we have 
$$
h_n(t)=\int_0^t h_n'(s)\,ds
\textrm{
with} \; h_n'\in L^1(0,T).$$ Since
$$
d(e^{-h_n(t)})=-h_n'(t)e^{-h_n(t)}\,dt
\qquad \textrm{and} \qquad
[e^{-h_n},
\lVert\partial_2u^n\rVert_{L^2}^2+
\lVert\partial_2B^n\rVert_{L^2}^2]_t=0,
$$
therefore, the product formula gives
\begin{align}
d\Bigg[
e^{-h_n(t)}
\bigl(
\lVert\partial_2u^n(t)\rVert_{L^2}^2
+
\lVert\partial_2B^n(t)\rVert_{L^2}^2
\bigr)
\Bigg]
&=
e^{-h_n(t)}
d\bigl(
\lVert\partial_2u^n(t)\rVert_{L^2}^2
+
\lVert\partial_2B^n(t)\rVert_{L^2}^2
\bigr)
\nonumber\\
&\quad
-
h_n'(t)e^{-h_n(t)}
\bigl( 
\lVert\partial_2u^n(t)\rVert_{L^2}^2
+
\lVert\partial_2B^n(t)\rVert_{L^2}^2
\bigr)\,dt.
\label{yu4e} 
\end{align}
Multiplying \eqref{67W} by $e^{-h_n(t)}$ using \eqref{yu4e} and canceling$$
h_n'(t)e^{-h_n(t)}
\left(
\lVert \partial_2u^n(t)\rVert_{L^2}^2
+
\lVert \partial_2B^n(t)\rVert_{L^2}^2
\right)\,dt
$$ from both sides, we obtain
\begin{align}
&d\Bigg[
e^{-h_n(t)}
\left(
\lVert \partial_2u^n(t)\rVert_{L^2}^2
+
\lVert \partial_2B^n(t)\rVert_{L^2}^2
\right)
\Bigg]
\nonumber\\
&\quad
+
\left(
2-2\alpha-\widetilde K_2
\right)
e^{-h_n(t)}
\left(
\nu\lVert \partial_1\partial_2u^n(t)\rVert_{L^2}^2
+
\eta\lVert \nabla\partial_2B^n(t)\rVert_{L^2}^2
\right)\,dt
\nonumber\\
&\le
\widetilde K_0e^{-h_n(t)}\,dt
+
e^{-h_n(t)}\,dM_n^{(2)}(t).
\label{87gb}
\end{align}
Integrating \eqref{87gb} over $[0,t]$, we obtain
\begin{align}
&e^{-h_n(t)}
\left(
\lVert \partial_2u^n(t)\rVert_{L^2}^2
+
\lVert \partial_2B^n(t)\rVert_{L^2}^2
\right)
\nonumber\\
&\quad
+
\left(
2-2\alpha-\widetilde K_2
\right)
\int_0^t
e^{-h_n(s)}
\left(
\nu\lVert \partial_1\partial_2u^n(s)\rVert_{L^2}^2
+
\eta\lVert \nabla\partial_2B^n(s)\rVert_{L^2}^2
\right)\,ds
\nonumber\\
&\le
\lVert \partial_2P_nu_0\rVert_{L^2}^2
+
\lVert \partial_2P_nB_0\rVert_{L^2}^2
+
\widetilde K_0
\int_0^t e^{-h_n(s)}\,ds
+
\int_0^t e^{-h_n(s)}\,dM_n^{(2)}(s).
\label{6tyv}
\end{align}
Since,
$
0<e^{-h_n(t)}\le1 \, \textrm{and}\,
\int_0^t e^{-h_n(s)}\,ds
\le
t
\le
T,
$ it follows from \eqref{6tyv} that
\begin{align}
&e^{-h_n(t)}
\left(
\lVert \partial_2u^n(t)\rVert_{L^2}^2
+
\lVert \partial_2B^n(t)\rVert_{L^2}^2
\right)
\nonumber\\
&\quad
+
\left(
2-2\alpha-\widetilde K_2
\right)
\int_0^t
e^{-h_n(s)}
\left(
\nu\lVert \partial_1\partial_2u^n(s)\rVert_{L^2}^2
+
\eta\lVert \nabla\partial_2B^n(s)\rVert_{L^2}^2
\right)\,ds
\nonumber\\
&\le
\lVert \partial_2P_nu_0\rVert_{L^2}^2
+
\lVert \partial_2P_nB_0\rVert_{L^2}^2
+
\widetilde K_0T
+
\int_0^t e^{-h_n(s)}\,dM_n^{(2)}(s).
\label{rt45h}
\end{align}
Taking the supremum over $t\in[0,T]$ in
\eqref{rt45h}, and taking
expectations, we obtain
\begin{align}
&\mathbb E
\sup_{0\le t\le T}
\Bigg[
e^{-h_n(t)}
\left(
\lVert \partial_2u^n(t)\rVert_{L^2}^2
+
\lVert \partial_2B^n(t)\rVert_{L^2}^2
\right)
\Bigg]
\nonumber\\
&\quad
+
\left(
2-2\alpha-\widetilde K_2
\right)
\mathbb E
\int_0^T
e^{-h_n(s)}
\left(
\nu\lVert \partial_1\partial_2u^n(s)\rVert_{L^2}^2
+
\eta\lVert \nabla\partial_2B^n(s)\rVert_{L^2}^2
\right)\,ds
\nonumber\\
&\le
\mathbb E
\left(
\lVert \partial_2P_nu_0\rVert_{L^2}^2
+
\lVert \partial_2P_nB_0\rVert_{L^2}^2
\right)
+
\widetilde K_0T
\nonumber\\
&\quad
+
\mathbb E
\sup_{0\le t\le T}
\left\lvert
\int_0^t
e^{-h_n(s)}\,dM_n^{(2)}(s)
\right\rvert.
\label{89bfn}
\end{align}
Using the independence of $W_1$ and $W_2$, so that
$
[\beta_k^{(1)},\beta_\ell^{(2)}]_t=0
$
for all $k,\ell\ge1$, the Burkholder-Davis-Gundy inequality, the
Hilbert-Schmidt inequality \eqref{LKH8}, Young's inequality
$
6ab\le\beta a^2+\frac{9}{\beta}b^2
$
for $a,b\ge0$ and $0<\beta<1$, and  \eqref{tyreQ},
we obtain
\begin{align}
&\mathbb E
\sup_{0\le t\le T}
\biggl\lvert
\int_0^t
e^{-h_n(s)}\,dM_n^{(2)}(s)
\biggr\rvert
\nonumber\\
&\le
\beta
\mathbb E
\sup_{0\le r\le T}
\biggl[
e^{-h_n(r)}
\bigl(
\lVert\partial_2u^n(r)\rVert_{L^2}^2
+
\lVert\partial_2B^n(r)\rVert_{L^2}^2
\bigr)
\biggr]
+
\frac{9\widetilde K_0T}{\beta}
\nonumber\\
&\quad
+
\frac{9\widetilde K_1}{\beta}
\mathbb E
\int_0^T
e^{-h_n(s)}
\bigl(
\lVert\partial_2u^n(s)\rVert_{L^2}^2
+
\lVert\partial_2B^n(s)\rVert_{L^2}^2
\bigr)\,ds
\nonumber\\
&\quad
+
\frac{9\widetilde K_2}{\beta}
\mathbb E
\int_0^T
e^{-h_n(s)}
\bigl(
\nu\lVert\partial_1\partial_2u^n(s)\rVert_{L^2}^2
+
\eta\lVert\nabla\partial_2B^n(s)\rVert_{L^2}^2
\bigr)\,ds.
\label{BDAS2}
\end{align}
Substituting \eqref{BDAS2} into
\eqref{89bfn}, and moving the corresponding terms to the left-hand side, we obtain
\begin{align}
&(1-\beta)
\mathbb E
\sup_{0\le t\le T}
\Bigg[
e^{-h_n(t)}
\left(
\lVert \partial_2u^n(t)\rVert_{L^2}^2
+
\lVert \partial_2B^n(t)\rVert_{L^2}^2
\right)
\Bigg]
\nonumber\\
&\quad
+
\left[
2
-
2\alpha
-
\left(
1+\frac{9}{\beta}
\right)\widetilde K_2
\right]
\mathbb E
\int_0^T
e^{-h_n(s)}
\left(
\nu\lVert \partial_1\partial_2u^n(s)\rVert_{L^2}^2
+
\eta\lVert \nabla\partial_2B^n(s)\rVert_{L^2}^2
\right)\,ds
\nonumber\\
&\le
\mathbb E
\left(
\lVert \partial_2P_nu_0\rVert_{L^2}^2
+
\lVert \partial_2P_nB_0\rVert_{L^2}^2
\right)
+
\left(
1+\frac{9}{\beta}
\right)\widetilde K_0T
\nonumber\\
&\quad
+
\frac{9\widetilde K_1}{\beta}
\mathbb E
\int_0^T
e^{-h_n(s)}
\left(
\lVert \partial_2u^n(s)\rVert_{L^2}^2
+
\lVert \partial_2B^n(s)\rVert_{L^2}^2
\right)\,ds.
\label{78x.k}
\end{align}
Since
$$
\int_0^T
e^{-h_n(s)}
\lVert \partial_2u^n(s)\rVert_{L^2}^2
+
\lVert \partial_2B^n(s)\rVert_{L^2}^2\,ds
\le
\int_0^T
\sup_{0\le r\le s}
\Bigg[
e^{-h_n(r)}
\left(
\lVert \partial_2u^n(r)\rVert_{L^2}^2
+
\lVert \partial_2B^n(r)\rVert_{L^2}^2
\right)
\Bigg]\,ds
$$ and dropping the second nonnegative term on the left-hand side of \eqref{78x.k} and using Tonelli's theorem, we obtain
\begin{align}
&\mathbb E
\sup_{0\le t\le T}
\Bigg[
e^{-h_n(t)}
\left(
\lVert \partial_2u^n(t)\rVert_{L^2}^2
+
\lVert \partial_2B^n(t)\rVert_{L^2}^2
\right)
\Bigg]
\nonumber\\
&\le
C_0
+
C_1
\int_0^T
\mathbb E
\sup_{0\le r\le s}
\Bigg[
e^{-h_n(r)}
\left(
\lVert \partial_2u^n(r)\rVert_{L^2}^2
+
\lVert \partial_2B^n(r)\rVert_{L^2}^2
\right)
\Bigg]\,ds.
\label{redw}
\end{align}
where $C_0>0$ depends only on
$T,\widetilde K_0,\beta$ and the initial data, while $C_1>0$
depends only on $\widetilde K_1,\beta$; both are independent of $n$.
Applying the Gronwall's inequality to
\eqref{redw},
we conclude that
\begin{align}
\mathbb E
\sup_{0\le t\le T}
\Bigg[
e^{-h_n(t)}
\left(
\lVert \partial_2u^n(t)\rVert_{L^2}^2
+
\lVert \partial_2B^n(t)\rVert_{L^2}^2
\right)
\Bigg]
\le
C_0e^{C_1T},
\label{7hczz}
\end{align}
Combining \eqref{89bfn} and \eqref{BDAS2} again with
\eqref{7hczz},
we obtain
\begin{align}
&\mathbb E
\sup_{0\le t\le T}
\biggl[
e^{-h_n(t)}
\bigl(
\lVert\partial_2u^n(t)\rVert_{L^2}^2
+
\lVert\partial_2B^n(t)\rVert_{L^2}^2
\bigr)
\biggr]
\nonumber\\
&\quad+
\mathbb E
\int_0^T
e^{-h_n(s)}
\bigl(
\nu\lVert\partial_1\partial_2u^n(s)\rVert_{L^2}^2
+
\eta\lVert\nabla\partial_2B^n(s)\rVert_{L^2}^2
\bigr)\,ds
\le C,
\label{vwre}
\end{align}
where the constant depends only on
$
T,\widetilde K_0,\widetilde K_1,\widetilde K_2,\alpha,\beta
$
and the initial data, and is independent of $n$.
This completes the proof.
\end{proof}
\noindent
We now use the preceding a priori estimates to establish the tightness
of the laws of the Galerkin approximations.
\begin{lemma}
\label{tight}
Under the hypotheses of Theorem~\ref{thm10}, the family of laws
$
\bigl\{
\mathcal L(u^n,B^n),\ n\in\mathbb N
\bigr\}
$
is tight in the space
$
\mathcal Z
=
\mathcal X_u\times\mathcal X_B,
$
where
\begin{align}
\mathcal X_u
:=
C(\mathbb R_+;H^{-1})
\cap
L_{\mathrm{loc}}^2(\mathbb R_+;H)
\cap
L_{\mathrm{loc},w}^2(\mathbb R_+;H^{1,0})\cap
L_{\mathrm{loc},w}^2(\mathbb R_+;H^{1,1})
\cap
L_{\mathrm{loc},w^\ast}^{\infty}
(\mathbb R_+;H^{0,1}),
\label{Xu_Tig}
\end{align}
and
\begin{align}
\mathcal X_B
:=
&C(\mathbb R_+;H^{-1})
\cap
L_{\mathrm{loc}}^2(\mathbb R_+;H)
\cap
L_{\mathrm{loc},w}^2(\mathbb R_+;H^1)
\nonumber\\
&\cap
L_{\mathrm{loc},w}^2(\mathbb R_+;H^{1,1})
\cap
L_{\mathrm{loc},w}^2(\mathbb R_+;H^{0,2})
\cap
L_{\mathrm{loc},w^\ast}^{\infty}
(\mathbb R_+;H^{0,1}).
\label{XB_Tig}
\end{align}
\end{lemma}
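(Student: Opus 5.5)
The plan is to prove tightness separately on each finite interval $[0,T]$, one component space at a time, and then combine. Tightness in $\mathcal Z$ follows from tightness of each marginal, because a finite intersection of topologies is handled by taking the product of compact sets. The local spaces $C(\mathbb R_+;\cdot)$ and $L^2_{\mathrm{loc}}$ are handled by restricting to $[0,N]$, $N\in\mathbb N$, and using the Fréchet metrics together with a diagonal choice of compact sets $K_N$ whose masses are at least $1-\varepsilon 2^{-N}$. For the weak and weak$^\ast$ parts, balls in $L^2(0,T;H^{1,0})$, $L^2(0,T;H^{1,1})$, $L^2(0,T;H^1)$, $L^2(0,T;H^{0,2})$ and $L^\infty(0,T;H^{0,1})$ are metrizable and compact by Banach--Alaoglu. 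So for these parts it suffices to show
$$\lim_{R\to\infty}\sup_n\mathbb P\bigl(\lVert u^n\rVert_{L^\infty_TH^{0,1}}+\lVert u^n\rVert_{L^2_TH^{1,1}}+\lVert B^n\rVert_{L^\infty_TH^{0,1}}+\lVert B^n\rVert_{L^2_TH^{1}\cap L^2_TH^{1,1}\cap L^2_TH^{0,2}}>R\bigr)=0 .$$

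The main obstacle is that Lemma~\ref{lem-vertical-estimate} only controls the $H^{0,1}$ quantities after weighting by $e^{-h_n(t)}$, and the random weight $h_n(T)$ is not bounded uniformly. I would remove the weight in probability rather than in expectation. By Lemma~\ref{ELY6} and Cauchy--Schwarz, $\sup_n\mathbb E\,h_n(T)\le C_T$, so Chebyshev gives $\mathbb P(h_n(T)>M)\le C_T/M$. On the event $\{h_n(T)\le M\}$ we have $e^{-h_n(t)}\ge e^{-M}$ for all $t\le T$. Hence \eqref{vert-estimate} yields
$$\mathbb P\Bigl(\sup_{t\le T}\bigl(\lVert\partial_2u^n\rVert^2+\lVert\partial_2B^n\rVert^2\bigr)+\int_0^T\bigl(\nu\lVert\partial_1\partial_2u^n\rVert^2+\eta\lVert\nabla\partial_2B^n\rVert^2\bigr)>R\Bigr)\le \frac{C_T}{M}+\frac{C_Te^{M}}{R}.$$
Choosing first $M$ large and then $R$ large makes this small uniformly in $n$. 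Combined with the $L^2$ bounds of Lemma~\ref{ELY6}, this gives uniform stochastic boundedness of $u^n$ in $L^\infty_TH^{0,1}\cap L^2_TH^{1,1}$. For $B^n$ it gives the same in $L^\infty_TH^{0,1}\cap L^2_TH^1\cap L^2_TH^{1,1}\cap L^2_TH^{0,2}$, using $\lVert\nabla\partial_2B\rVert^2\ge\lVert\partial_1\partial_2B\rVert^2+\lVert\partial_2^2B\rVert^2$.

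For the strong parts I would use an Aubin--Lions/Simon compactness lemma in the form of Flandoli--Gatarek. Since $H^{1,1}\hookrightarrow H\hookrightarrow H^{-1}$ with the first embedding compact, the set
$$\{\,v:\ \lVert v\rVert_{L^2_TH^{1,1}}+\lVert v\rVert_{L^\infty_TH^{0,1}}+\lVert v\rVert_{W^{\gamma,2}(0,T;H^{-1})}\le R\,\}$$
is relatively compact in $L^2(0,T;H)\cap C([0,T];H^{-1})$; for the $C$-part I would use $W^{\gamma,p}$ with $\gamma p>1$, or alternatively the Aldous criterion. So the remaining step is a uniform bound for $u^n$ and $B^n$ in $W^{\gamma,p}(0,T;H^{-1})$ in probability, which I would obtain by decomposing $u^n(t)=P_nu_0+\int_0^tP_nF_1\,ds+\int_0^tP_n\sigma_1\Pi_n^{(1)}dW_1$.

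For the drift, $\lVert(u\cdot\nabla)u\rVert_{H^{-1}}\le\lVert u\otimes u\rVert_{L^2}\le\lVert u\rVert_{L^4}^2$, using the divergence-free structure. The anisotropic Ladyzhenskaya-type bound, which follows from \eqref{Lia}, gives
$$\lVert u\rVert_{L^4}^2\lesssim \lVert u\rVert_{L^2}\bigl(\lVert u\rVert+\lVert\partial_1u\rVert\bigr)^{1/2}\bigl(\lVert u\rVert+\lVert\partial_2u\rVert\bigr)^{1/2}.$$
The same holds for the $B$-terms, and $\lVert\nu\partial_1^2u\rVert_{H^{-1}}\le\nu\lVert\partial_1u\rVert$. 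Therefore $\int_0^T\lVert P_nF_i\rVert_{H^{-1}}^2\,dt$ is bounded in probability by the previous step, which gives a $W^{1,2}(0,T;H^{-1})$ bound on the drift part. For the noise, the $H^{-1}$-growth \eqref{NH1}, the BDG inequality and the fourth-moment bound of Lemma~\ref{lemma-fourth-moment-L2} give $\mathbb E\lVert\int_0^\cdot P_n\sigma_i\Pi_n^{(i)}dW_i\rVert_{W^{\gamma,4}(0,T;H^{-1})}^4\le C_T$ for any $\gamma<1/2$, by the standard Flandoli--Gatarek lemma; taking $\gamma\in(1/4,1/2)$ gives continuity. Chebyshev's inequality then finishes the argument, and the diagonal procedure in $N$ yields tightness on $\mathcal Z$.
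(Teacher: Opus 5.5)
Your argument is correct. Its skeleton matches the paper's: uniform moment bounds, Chebyshev, compact sets given by norm bounds plus time regularity in $H^{-1}$, Banach--Alaoglu for the weak and weak$^\ast$ components, and a diagonal choice over $T$. The ingredients differ in three places.

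(1) You remove the random weight $e^{-h_n}$ in probability, using $\sup_n\mathbb E\,h_n(T)\le C_T$ and Chebyshev on $\{h_n(T)>M\}$. The paper instead puts the energy--dissipation functional into $A=A_1+A_2$. Then $h_{u,B}(T)\le C_{R,T,\nu,\eta}$ holds deterministically on $K_R^T$, and the weight is removed pathwise, giving \eqref{lKGE}--\eqref{ikgq}.

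(2) For time regularity you use $W^{\gamma,p}(0,T;H^{-1})$ and the Flandoli--Gatarek lemmas: Aubin--Lions-type compactness into $L^2(0,T;H)$, and the $W^{\gamma,4}$ moment bound for stochastic integrals. The paper works in $C^{1/8}([0,T];H^{-1})$ and bounds the noise via BDG on increments plus Kolmogorov. It gets strong $L^2(0,T;H)$ compactness by hand from Arzel\`a--Ascoli and $\lVert f\rVert_{L^2}^2\le\lVert f\rVert_{H^{-1}}\lVert f\rVert_{H^1}$. For the $C([0,T];H^{-1})$ component your argument is also really Arzel\`a--Ascoli. The $W^{\gamma,p}$ bound with $\gamma p>1$ only gives equicontinuity, and pointwise relative compactness comes from the $L^\infty_TH^{0,1}$ bound in your set.

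(3) You control the drift through the anisotropic Ladyzhenskaya inequality obtained from \eqref{Lia}, which needs only $L^\infty_TH^{0,1}$ and $L^2_TH^{1,0}$. The paper uses $\lVert v\rVert_{L^4}^4\le C\lVert v\rVert_{H^{0,1}}^2\lVert v\rVert_{H^{1,1}}^2$, and hence the $H^{1,1}$ bound.

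Your version is shorter and describes the compact sets by unweighted norms alone, at the cost of citing external compactness and stochastic-integral lemmas. It also implicitly needs the standard closed-diagonal argument for intersections of topologies. The paper's version is more self-contained, relying only on Arzel\`a--Ascoli, an elementary interpolation inequality and Kolmogorov's theorem.
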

\begin{proof}
For every $T>0$, define
\begin{align}
A_1(u,B;T)
:=
&\sup_{0\le t\le T}
\left(
\lVert u(t)\rVert_{L^2}^2
+
\lVert B(t)\rVert_{L^2}^2
\right)
+
\int_0^T
\left(
\nu\lVert \partial_1u(t)\rVert_{L^2}^2
+
\eta\lVert \nabla B(t)\rVert_{L^2}^2
\right)\,dt
\nonumber\\
&+
\lVert u\rVert_{C^{1/8}([0,T];H^{-1})}
+
\lVert B\rVert_{C^{1/8}([0,T];H^{-1})},
\label{TA1}
\end{align}
and
\begin{align}
A_2(u,B;T)
:=
&\sup_{0\le t\le T}
e^{-h_{u,B}(t)}
\left(
\lVert \partial_2u(t)\rVert_{L^2}^2
+
\lVert \partial_2B(t)\rVert_{L^2}^2
\right)
\nonumber\\
&+
\int_0^T
e^{-h_{u,B}(t)}
\left(
\nu\lVert \partial_1\partial_2u(t)\rVert_{L^2}^2
+
\eta\lVert \nabla\partial_2B(t)\rVert_{L^2}^2
\right)\,dt.
\label{TA2a}
\end{align}
Set
\begin{align}
A(u,B;T)
:=
A_1(u,B;T)+A_2(u,B;T).
\label{J5r}
\end{align}
For $R>0$ and $T>0$, define
\begin{align}
K_R^T
:=
\Bigl\{
(u,B)\in\mathcal Z:
&A(u,B;T)\le R,
\quad
\nabla\cdot u=0,
\quad
\nabla\cdot B=0
\Bigr\}.
\label{KRT}
\end{align}
We now prove the following two assertions:
\begin{align}
\text{\rm(i)}\quad
&\text{for any }R(T)>0,\qquad
\bigcap_{T=1}^{\infty}K_{R(T)}^T
\text{ is relatively compact in }\mathcal Z,
\label{GTRW}
\\
\text{\rm(ii)}\quad
&\text{for any }\varepsilon>0\text{ and any }T>0,
\text{ there exists }R_0(T)>0
\text{ such that}
\nonumber\\
&\hspace{2.7cm}
\mu_n\!\left(K_{R_0(T)}^T\right)
>
1-\frac{\varepsilon}{2^T},
\qquad n\in\mathbb N.
\label{TGYG}
\end{align}

\noindent
\textit{Proof of \textup{(i)}.}
Since convergence in $\mathcal Z=\mathcal X_u\times\mathcal X_B$
is equivalent to convergence in the spaces appearing in
\eqref{Xu_Tig}-\eqref{XB_Tig} on $[0,T]$ for every $T>0$, it suffices
to prove that, for any $R>0$, $T>0$, and sequence
$\{(u_m,B_m)\}_{m\geq1}\subset K_R^T$, there exists a subsequence
converging in all these spaces on $[0,T]$.

\medskip
\noindent Fix $T>0$ and $R>0$, and consider an arbitrary sequence
$
\{(u_m,B_m)\}_{m\ge1}
\subset
K_R^T.
$
By the definition of $K_R^T$,  \eqref{KRT} implies
\begin{align}
\sup_{0\le t\le T}
\left(
\lVert u_m(t)\rVert_{L^2}^2
+
\lVert B_m(t)\rVert_{L^2}^2
\right)
&\le
R,
\label{YUR1}
\\
\int_0^T
\left(
\nu\lVert \partial_1u_m(t)\rVert_{L^2}^2
+
\eta\lVert \nabla B_m(t)\rVert_{L^2}^2
\right)\,dt
&\le
R,
\label{6twe}
\\
\lVert u_m\rVert_{C^{1/8}([0,T];H^{-1})}
+
\lVert B_m\rVert_{C^{1/8}([0,T];H^{-1})}
&\le
R,
\label{ugew}
\end{align}
and
\begin{align}
&\sup_{0\le t\le T}
e^{-h_{u_m,B_m}(t)}
\left(
\lVert \partial_2u_m(t)\rVert_{L^2}^2
+
\lVert \partial_2B_m(t)\rVert_{L^2}^2
\right)
\nonumber\\
&\quad
+
\int_0^T
e^{-h_{u_m,B_m}(t)}
\left(
\nu\lVert \partial_1\partial_2u_m(t)\rVert_{L^2}^2
+
\eta\lVert \nabla\partial_2B_m(t)\rVert_{L^2}^2
\right)\,dt
\le
R.
\label{qwacl}
\end{align}
Since
$
e^{-C_{R,T,\nu,\eta}}
\le
e^{-h_{u_m,B_m}(t)}
\le
1,
\, t\in[0,T],$
using \eqref{qwacl}, we obtain
\begin{align}
\sup_{0\le t\le T}
\bigl(
\lVert\partial_2u_m(t)\rVert_{L^2}^2
+
\lVert\partial_2B_m(t)\rVert_{L^2}^2
\bigr)
&\le
R e^{C_{R,T,\nu,\eta}},
\label{lKGE}
\\
\int_0^T
\bigl(
\nu\lVert\partial_1\partial_2u_m(t)\rVert_{L^2}^2
+
\eta\lVert\nabla\partial_2B_m(t)\rVert_{L^2}^2
\bigr)\,dt
&\le
R e^{C_{R,T,\nu,\eta}}.
\label{ikgq}
\end{align}
\underline{\textbf{ Uniform bounds for the velocity sequence.}}

By \eqref{YUR1} and \eqref{6twe},
\begin{align}
\int_0^T
\lVert u_m(t)\rVert_{H^{1,0}}^2\,dt
&=
\int_0^T
\left(
\lVert u_m(t)\rVert_{L^2}^2
+
\lVert \partial_1u_m(t)\rVert_{L^2}^2
\right)\,dt
\le
TR+\frac{R}{\nu}
\le
C_{R,T,\nu,\eta}.
\label{u-H10}
\end{align}
By \eqref{YUR1}, \eqref{6twe},
\eqref{lKGE}, and \eqref{ikgq}, we obtain
\begin{align}
\int_0^T
\lVert u_m(t)\rVert_{H^{1,1}}^2\,dt
&=
\int_0^T
\left(
\lVert u_m\rVert_{L^2}^2
+
\lVert \partial_1u_m\rVert_{L^2}^2
+
\lVert \partial_2u_m\rVert_{L^2}^2
+
\lVert \partial_1\partial_2u_m\rVert_{L^2}^2
\right)\,dt
\nonumber\\
&\le
TR+\frac{R}{\nu}
+TC_R+\frac{C_R}{\nu}
\le
C_{R,T,\nu,\eta}.
\label{u-H11}
\end{align}
Similarly, \eqref{YUR1} and \eqref{lKGE} give
\begin{align}
\sup_{0\le t\le T}
\lVert u_m(t)\rVert_{H^{0,1}}^2
&=
\sup_{0\le t\le T}
\left(
\lVert u_m(t)\rVert_{L^2}^2
+
\lVert \partial_2u_m(t)\rVert_{L^2}^2
\right)
\le
R+C_R.
\label{u-H01}
\end{align}
Therefore, \eqref{u-H10}, \eqref{u-H11}, and \eqref{u-H01} imply that
\begin{align}
\{u_m\}_{m\ge1}
\ \text{is bounded in}\
L^2(0,T;H^{1,0})
\cap
L^2(0,T;H^{1,1})
\cap
L^\infty(0,T;H^{0,1}).
\label{uRE}
\end{align}

\medskip
\noindent
\underline{\textbf{ Uniform bounds for the magnetic sequence.}}
Using \eqref{YUR1} and \eqref{6twe}, we obtain
\begin{align}
\int_0^T
\lvert B_m(t)\rVert_{H^1}^2\,dt
&=
\int_0^T
(
\lVert B_m(t)\rVert_{L^2}^2
+
\lvert \nabla B_m(t)\rVert_{L^2}^2
)\,dt
\le
TR+\frac{R}{\eta}
\le
C_{R,T,\nu,\eta}.
\label{B-H1}
\end{align}
Moreover, since
$$
\lVert \partial_1B_m\rVert_{L^2}^2
\le
\lVert \nabla B_m\rVert_{L^2}^2,
\qquad
\lVert \partial_1\partial_2B_m\rVert_{L^2}^2
\le
\lVert \nabla\partial_2B_m\rVert_{L^2}^2,
$$
\eqref{YUR1}, \eqref{6twe},
\eqref{lKGE}, and \eqref{ikgq} yield
\begin{align}
\int_0^T
\lVert B_m(t)\rVert_{H^{1,1}}^2\,dt
&\le
TR+\frac{R}{\eta}
+TC_R+\frac{C_R}{\eta}
\le
C_{R,T,\nu,\eta}.
\label{B-H11}
\end{align}
Similarly, using
$
\lVert \partial_2^2B_m\rVert_{L^2}^2
\le
\lVert \nabla\partial_2B_m\rVert_{L^2}^2,
$
we have
\begin{align}
\int_0^T
\lVert B_m(t)\rVert_{H^{0,2}}^2\,dt
&\le
TR+TC_R+\frac{C_R}{\eta}
\le
C_{R,T,\nu,\eta}.
\label{B-H02}
\end{align}
Finally, \eqref{YUR1} and \eqref{lKGE} give
\begin{align}
\sup_{0\le t\le T}
\lVert B_m(t)\rVert_{H^{0,1}}^2
&=
\sup_{0\le t\le T}
\left(
\lVert B_m(t)\rVert_{L^2}^2
+
\lVert \partial_2B_m(t)\rVert_{L^2}^2
\right)
\le
R+C_R.
\label{B-H01}
\end{align}
Therefore, \eqref{B-H1}, \eqref{B-H11}, \eqref{B-H02}, and \eqref{B-H01} imply that
\begin{align}
\{B_m\}_{m\ge1}
\ \text{is bounded in}\
L^2(0,T;H^1)
\cap
L^2(0,T;H^{1,1})
\cap
L^2(0,T;H^{0,2})
\cap
L^\infty(0,T;H^{0,1}).
\label{rtAA}
\end{align}
\medskip
\noindent
\underline{\textbf{ Compactness in $C([0,T];H^{-1})$.}}

\noindent
By \eqref{YUR1}, for every $t\in[0,T]$,
\begin{align}
\sup_{m\ge1}
\left(
\lVert u_m(t)\rVert_{L^2}^2
+
\lvert B_m(t)\rVert_{L^2}^2
\right)
\le R.
\end{align}
Since
$
L^2(\mathbb T^2)
\hookrightarrow\hookrightarrow
H^{-1}(\mathbb T^2),
$
the sets
$
\{u_m(t):m\ge1\}
$
and
$
\{B_m(t):m\ge1\}$
are relatively compact in $H^{-1}$ for every $t\in[0,T].$

\medskip

\noindent Moreover, \eqref{ugew} gives, for all $s,t\in[0,T]$,
\begin{align}
\lVert u_m(t)-u_m(s)\rVert_{H^{-1}}
&\le R\lvert t-s\rvert^{1/8},
\label{u-C}
\\
\lVert B_m(t)-B_m(s)\rVert_{H^{-1}}
&\le R\lvert t-s\rvert^{1/8}.
\label{B-C}
\end{align}
Thus both sequences are equicontinuous in $H^{-1}$. Hence, by the
Arzel\`a-Ascoli theorem, after passing to a further subsequence,
\begin{align}
u_m
&\longrightarrow u
&&\text{strongly in }C([0,T];H^{-1}),
\label{er2}
\\
B_m
&\longrightarrow B
&&\text{strongly in }C([0,T];H^{-1}).
\label{6ty4av}
\end{align}

\medskip
\noindent
\underline{\textbf{ Strong compactness in $L^2(0,T;H)$.}}

\noindent
We use the interpolation inequality
\begin{align}
\lVert f\rVert_{L^2}^2
\le
\lVert f\rVert_{H^{-1}}\lVert f\rVert_{H^1},
\qquad
f\in H^1(\mathbb T^2).
\label{uikl}
\end{align}
Since
$
H^{1,1}(\mathbb T^2)\hookrightarrow H^1(\mathbb T^2),
$
\eqref{u-H11} and \eqref{B-H1} imply
\begin{align}
\sup_{m\ge1}
\int_0^T
\left(
\lVert u_m(t)\rVert_{H^1}^2
+
\lVert B_m(t)\rVert_{H^1}^2
\right)\,dt
\le
C_{R,T,\nu,\eta}.
\label{ew2A1}
\end{align}
Hence, for $m,\ell\ge1$, by
\eqref{uikl} and the Cauchy-Schwarz inequality,
\begin{align}
\int_0^T
\lVert u_m(t)-u_\ell(t)\rVert_{L^2}^2\,dt
&\le
\sup_{0\le t\le T}
\lVert u_m(t)-u_\ell(t)\rVert_{H^{-1}}
\int_0^T
\lVert u_m(t)-u_\ell(t)\rVert_{H^1}\,dt
\nonumber\\
&\le
C_{R,T,\nu,\eta}
\sup_{0\le t\le T}
\lVert u_m(t)-u_\ell(t)\rVert_{H^{-1}},
\label{6thu}
\\
\int_0^T
\lVert B_m(t)-B_\ell(t)\rVert_{L^2}^2\,dt
&\le
\sup_{0\le t\le T}
\lVert B_m(t)-B_\ell(t)\rVert_{H^{-1}}
\int_0^T
\lVert B_m(t)-B_\ell(t)\rVert_{H^1}\,dt
\nonumber\\
&\le
C_{R,T,\nu,\eta}
\sup_{0\le t\le T}
\lVert B_m(t)-B_\ell(t)\rVert_{H^{-1}}.
\label{5tgh}
\end{align}
By \eqref{er2} and
\eqref{6ty4av}, the right-hand sides of
\eqref{6thu}-\eqref{5tgh} tend to zero as
$m,\ell\to\infty$. Therefore, $\{u_m\}_{m\ge1}$ and
$\{B_m\}_{m\ge1}$ are Cauchy in $L^2(0,T;H)$.
Their limits are $u$ and $B$, respectively, by
\eqref{er2} and \eqref{6ty4av}.
Consequently,
\begin{align}
u_m
&\longrightarrow u
&&\text{strongly in }L^2(0,T;H),
\label{hg4xq}
\\
B_m
&\longrightarrow B
&&\text{strongly in }L^2(0,T;H).
\label{TW1z}
\end{align}
\noindent \underline{\textbf{ Convergence in the full product path space.}}

\noindent By \eqref{uRE}, the sequence $\{u_m\}_{m\ge1}$ is bounded in
$$
L^2(0,T;H^{1,0}),
\qquad
L^2(0,T;H^{1,1}),
\qquad
L^\infty(0,T;H^{0,1});
$$
hence, by reflexivity of the $L^2$-spaces and the Banach-Alaoglu
theorem, there exists a subsequence, still denoted by $\{u_m\}_{m\ge1}$,
such that
\begin{align}
u_m
&\rightharpoonup u
&&\text{weakly in }L^2(0,T;H^{1,0}),
\label{ty4eh}
\\
u_m
&\rightharpoonup u
&&\text{weakly in }L^2(0,T;H^{1,1}),
\label{gh4}
\\
u_m
&\overset{\ast}{\rightharpoonup} u
&&\text{ in }
L^\infty(0,T;H^{0,1}).
\label{87xs}
\end{align}
Similarly, by \eqref{rtAA}, the sequence
$\{B_m\}_{m\ge1}$ is bounded in
$$
L^2(0,T;H^1),
\qquad
L^2(0,T;H^{1,1}),
\qquad
L^2(0,T;H^{0,2}),
\qquad
L^\infty(0,T;H^{0,1}),
$$
hence, by reflexivity and the Banach-Alaoglu theorem, 
\begin{align}
B_m
&\rightharpoonup B
&&\text{weakly in }L^2(0,T;H^1),
\label{j6d}
\\
B_m
&\rightharpoonup B
&&\text{weakly in }L^2(0,T;H^{1,1}),
\label{hu6D4}
\\
B_m
&\rightharpoonup B
&&\text{weakly in }L^2(0,T;H^{0,2}),
\label{97b}
\\
B_m
&\overset{\ast}{\rightharpoonup} B
&&\text{in } L^\infty(0,T;H^{0,1}).
\label{se2}
\end{align}
Thus, for every $R,T>0$, each sequence
$\{(u_m,B_m)\}_{m\ge1}\subset K_R^T$
admits a subsequence satisfying all the above convergences. Consequently, for any $R(T)>0$,
\begin{align}
\bigcap_{T=1}^{\infty}K_{R(T)}^T
\end{align}
is relatively compact in $\mathcal Z$, which proves \eqref{GTRW}.
\medskip

\noindent
\textit{Proof of \textup{(ii)}.}
Fix $\varepsilon>0$ and $T>0$. By \eqref{L2engy},
\begin{align}
\sup_{n\in\mathbb N}
\mathbb E
\Bigg[
&\sup_{0\le t\le T}
\left(
\lVert u^n(t)\rVert_{L^2}^2
+
\lVert B^n(t)\rVert_{L^2}^2
\right)
+
\int_0^T
\left(
\nu\lVert \partial_1u^n(t)\rVert_{L^2}^2
+
\eta\lVert \nabla B^n(t)\rVert_{L^2}^2
\right)\,dt
\Bigg]
\le
C_T,
\label{TBE}
\end{align}
hence, by Chebyshev's inequality, for every $\varepsilon>0$, there
exists $R_1>0$, independent of $n$, such that
\begin{align}
\mathbb P
\Bigg(\sup_{0\le t\le T}
\left(
\lVert u^n(t)\rVert_{L^2}^2
+
\lVert B^n(t)\rVert_{L^2}^2
\right)
+
\int_0^T
\left(
\nu\lVert \partial_1u^n(t)\rVert_{L^2}^2
+
\eta\lVert \nabla B^n(t)\rVert_{L^2}^2
\right)\,dt
>
R_1
\Bigg)
<
\frac{\varepsilon}{3\cdot 2^T}
\label{ncxzJ1}
\end{align}
for every $n\in\mathbb N$.

\noindent Similarly, \eqref{vwre} gives
\begin{align}
\sup_{n\in\mathbb N}
\mathbb E
\Bigg[
&\sup_{0\le t\le T}
e^{-h_n(t)}
\left(
\lVert \partial_2u^n(t)\rVert_{L^2}^2
+
\lVert \partial_2B^n(t)\rVert_{L^2}^2
\right)
\nonumber\\
&\quad
+
\int_0^T
e^{-h_n(t)}
\left(
\nu\lVert \partial_1\partial_2u^n(t)\rVert_{L^2}^2
+
\eta\lVert \nabla\partial_2B^n(t)\rVert_{L^2}^2
\right)\,dt
\Bigg]
\le
C_T.
\label{bhge}
\end{align}
Hence, by Chebyshev's inequality, for every $\varepsilon>0$, there
exists $R_1>0$, independent of $n$, such that
\begin{align}
\mathbb P
\Bigg(
&\sup_{0\le t\le T}
e^{-h_n(t)}
\left(
\lVert \partial_2u^n(t)\rVert_{L^2}^2
+
\lVert \partial_2B^n(t)\rVert_{L^2}^2
\right)
\nonumber\\
&\quad
+
\int_0^T
e^{-h_n(t)}
\left(
\nu\lVert \partial_1\partial_2u^n(t)\rVert_{L^2}^2
+
\eta\lVert \nabla\partial_2B^n(t)\rVert_{L^2}^2
\right)\,dt
>
R_1
\Bigg)
<
\frac{\varepsilon}{3\cdot 2^T}.
\label{76cm}
\end{align} 
\medskip
\noindent It remains to control the time-H\"older norms in $H^{-1}$.
For $0\le s<t\le T$, the Galerkin equations give
\begin{align}
u^n(t)-u^n(s)
&=
\nu\int_s^t
\partial_1^2u^n(r)\,dr
-
\int_s^t
P_n\operatorname{div}
\left(
u^n(r)\otimes u^n(r)
\right)\,dr
+
\int_s^t
P_n\operatorname{div}
\left(
B^n(r)\otimes B^n(r)
\right)\,dr
\nonumber\\
&\quad
+
\int_s^t
P_n\sigma_1(r,u^n(r),B^n(r))\Pi_n^{(1)}\,dW_1(r),
\label{Cdf2}
\\
B^n(t)-B^n(s)
&=
\eta\int_s^t
\Delta B^n(r)\,dr
-
\int_s^t
P_n\operatorname{div}
\left(
u^n(r)\otimes B^n(r)
\right)\,dr
+
\int_s^t
P_n\operatorname{div}
\left(
B^n(r)\otimes u^n(r)
\right)\,dr
\nonumber\\
&\quad
+
\int_s^t
P_n\sigma_2(r,u^n(r),B^n(r))\Pi_n^{(2)}\,dW_2(r).
\label{D1i}
\end{align}
We first estimate the deterministic terms in $H^{-1}$. By integration by parts,
\begin{align}
\lVert \partial_1^2u^n\rVert_{H^{-1}}
&\le
\lVert \partial_1u^n\rVert_{L^2},
\label{Er2}
\\
\lVert \Delta B^n\rVert_{H^{-1}}
&\le
\lVert \nabla B^n\rVert_{L^2}.
\label{W1a}
\end{align}
Moreover, for $a,b\in L^4(\mathbb T^2)$, we obtain
\begin{align}
\lVert \operatorname{div}(a\otimes b)\rVert_{H^{-1}}
\le
C\lVert a\rVert_{L^4}\lVert b\rVert_{L^4}.
\label{ujA1v}
\end{align}
By \eqref{ncxzJ1} and \eqref{76cm}, with probability at least
$1-\frac{2\varepsilon}{3\cdot2^T}$, the same estimates
\eqref{lKGE}, \eqref{ikgq}, \eqref{u-H11}, \eqref{u-H01},
\eqref{B-H11}, and \eqref{B-H01}, with $R$ replaced by $R_1$, yield
\begin{align}
\sup_{0\le t\le T}
\Bigl(
\lVert u^n(t)\rVert_{H^{0,1}}^2
+
\lVert B^n(t)\rVert_{H^{0,1}}^2
\Bigr)
&\le
C_{R_1,T,\nu,\eta},
\label{7gE3}
\\
\int_0^T
\Bigl(
\lVert u^n(t)\rVert_{H^{1,1}}^2
+
\lVert B^n(t)\rVert_{H^{1,1}}^2
\Bigr)\,dt
&\le
C_{R_1,T,\nu,\eta}.
\label{78hER}
\end{align}
By interpolation between $H^{0,1}$ and $H^{1,1}$,
along with
$
H^{1/2,1}(\mathbb T^2)
\hookrightarrow
H^{1/2}(\mathbb T^2)
\hookrightarrow
L^4(\mathbb T^2),
$
we have
\begin{align}
\lVert v\rVert_{L^4}^4
\le
C
\lVert v\rVert_{H^{0,1}}^2
\lVert v\rVert_{H^{1,1}}^2.
\label{56fa}
\end{align}
Hence, by \eqref{7gE3} and \eqref{78hER},
\begin{align}
\int_0^T
\lVert u^n(t)\rVert_{L^4}^4\,dt
&\le
C
\sup_{0\le t\le T}
\lVert u^n(t)\rVert_{H^{0,1}}^2
\int_0^T
\lVert u^n(t)\rVert_{H^{1,1}}^2\,dt
\le
C_{R_1,T,\nu,\eta},
\label{56sza}
\\
\int_0^T
\lVert B^n(t)\rVert_{L^4}^4\,dt
&\le
C
\sup_{0\le t\le T}
\lVert B^n(t)\rVert_{H^{0,1}}^2
\int_0^T
\lVert B^n(t)\rVert_{H^{1,1}}^2\,dt
\le
C_{R_1,T,\nu,\eta}.
\label{ty6m}
\end{align}
It follows from \eqref{ujA1v} and
\eqref{56sza}-\eqref{ty6m} that
\begin{align}
\int_0^T
\lVert
\operatorname{div}(u^n\otimes u^n)
\rVert_{H^{-1}}^2\,dt
&\le
C\int_0^T
\lVert u^n\rVert_{L^4}^4\,dt
\le
C_{R_1,T,\nu,\eta},
\label{54fv}
\\
\int_0^T
\lVert
\operatorname{div}(B^n\otimes B^n)
\rVert_{H^{-1}}^2\,dt
&\le
C\int_0^T
\lVert B^n\rVert_{L^4}^4\,dt
\le
C_{R_1,T,\nu,\eta},
\label{rf458}
\\
\int_0^T
\lVert
\operatorname{div}(u^n\otimes B^n)
\rVert_{H^{-1}}^2\,dt
&\le
C\int_0^T
\lVert u^n\rVert_{L^4}^2
\lVert B^n\rVert_{L^4}^2\,dt
\nonumber\\
&\le
C
\left(
\int_0^T
\lVert u^n\rVert_{L^4}^4\,dt
\right)^{1/2}
\left(
\int_0^T
\lVert B^n\rVert_{L^4}^4\,dt
\right)^{1/2}
\nonumber\\
&\le
C_{R_1,T,\nu,\eta},
\label{d45b}
\\
\int_0^T
\lVert
\operatorname{div}(B^n\otimes u^n)
\rVert_{H^{-1}}^2\,dt
&\le
C\int_0^T
\lVert B^n\rVert_{L^4}^2
\lVert u^n\rVert_{L^4}^2\,dt
\nonumber\\
&\le
C
\left(
\int_0^T
\lVert B^n\rVert_{L^4}^4\,dt
\right)^{1/2}
\left(
\int_0^T
\lVert u^n\rVert_{L^4}^4\,dt
\right)^{1/2}
\nonumber\\
&\le
C_{R_1,T,\nu,\eta}.
\label{r32a}
\end{align}
Using \eqref{Er2}, \eqref{W1a},
\eqref{YUR1}, \eqref{6twe}, and
\eqref{54fv}-\eqref{r32a}, together with
$
\lVert P_n f\rVert_{H^{-1}}
\le
\lVert f\rVert_{H^{-1}},
$
we obtain
\begin{align}
&\int_0^T
\Bigl\lVert
\nu\partial_1^2u^n
-
P_n\operatorname{div}(u^n\otimes u^n)
+
P_n\operatorname{div}(B^n\otimes B^n)
\Bigr\rVert_{H^{-1}}^2\,dt
\nonumber\\
&\quad+
\int_0^T
\Bigl\lVert
\eta\Delta B^n
-
P_n\operatorname{div}(u^n\otimes B^n)
+
P_n\operatorname{div}(B^n\otimes u^n)
\Bigr\rVert_{H^{-1}}^2\,dt
\le
C_{R_1,T,\nu,\eta}.
\label{56FG}
\end{align}
Consequently, for $0\le s<t\le T$,  \eqref{56FG} gives
\begin{align}
&\int_s^t
\Bigl\lVert
\nu\partial_1^2u^n(r)
-
P_n\operatorname{div}(u^n(r)\otimes u^n(r))
+
P_n\operatorname{div}(B^n(r)\otimes B^n(r))
\Bigr\rVert_{H^{-1}}^2\,dr
\le
C_{R_1,T,\nu,\eta},
\label{6thm}
\\
&\int_s^t
\Bigl\lVert
\eta\Delta B^n(r)
-
P_n\operatorname{div}(u^n(r)\otimes B^n(r))
+
P_n\operatorname{div}(B^n(r)\otimes u^n(r))
\Bigr\rVert_{H^{-1}}^2\,dr
\le
C_{R_1,T,\nu,\eta}.
\label{564}
\end{align}
Therefore, by H\"older's inequality in time, \eqref{6thm} and \eqref{564} yield
\begin{align}
&\Biggl\lVert
\int_s^t
\Bigl[
\nu\partial_1^2u^n(r)
-
P_n\operatorname{div}(u^n(r)\otimes u^n(r))
+
P_n\operatorname{div}(B^n(r)\otimes B^n(r))
\Bigr]\,dr
\Biggr\rVert_{H^{-1}}
\nonumber\\
&\quad\le
\lvert t-s\rvert^{1/2}
\left(
\int_s^t
\Bigl\lVert
\nu\partial_1^2u^n(r)
-
P_n\operatorname{div}(u^n(r)\otimes u^n(r))
+
P_n\operatorname{div}(B^n(r)\otimes B^n(r))
\Bigr\rVert_{H^{-1}}^2\,dr
\right)^{1/2}
\nonumber\\
&\quad\le
C_{R_1,T,\nu,\eta}\lvert t-s\rvert^{1/2},
\label{hlub3}
\\
&\Biggl\lVert
\int_s^t
\Bigl[
\eta\Delta B^n(r)
-
P_n\operatorname{div}(u^n(r)\otimes B^n(r))
+
P_n\operatorname{div}(B^n(r)\otimes u^n(r))
\Bigr]\,dr
\Biggr\rVert_{H^{-1}}
\nonumber\\
&\quad\le
\lvert t-s\rvert^{1/2}
\left(
\int_s^t
\Bigl\lVert
\eta\Delta B^n(r)
-
P_n\operatorname{div}(u^n(r)\otimes B^n(r))
+
P_n\operatorname{div}(B^n(r)\otimes u^n(r))
\Bigr\rVert_{H^{-1}}^2\,dr
\right)^{1/2}
\nonumber\\
&\quad\le
C_{R_1,T,\nu,\eta}\lvert t-s\rvert^{1/2}.
\label{Bhhy5}
\end{align}
We next estimate the stochastic increments. By the Burkholder-Davis-Gundy
inequality, the uniform boundedness of $P_n$ on $H^{-1}$, the
$H^{-1}$-growth assumption \eqref{NH1}, H\"older's inequality, and
\eqref{FL2}, for $0\le s<t\le T$,
\begin{align}
&\mathbb E
\left\lVert
\int_s^t
P_n\sigma_1(r,u^n(r),B^n(r))\Pi_n^{(1)}\,dW_1(r)
\right\rVert_{H^{-1}}^4
\nonumber\\
&\quad\le
C\mathbb E
\left(
\int_s^t
\left\lVert
\sigma_1(r,u^n(r),B^n(r))
\right\rVert_{L_2(\mathfrak U_1;H^{-1})}^2\,dr
\right)^2
\nonumber\\
&\quad\le
C\lvert t-s\rvert
\int_s^t
\mathbb E
\left[
K_0'
+
K_1'
\bigl(
\lVert u^n(r)\rVert_{L^2}^2
+
\lVert B^n(r)\rVert_{L^2}^2
\bigr)
\right]^2\,dr
\nonumber\\
&\quad\le
C\lvert t-s\rvert
\int_s^t
\left[
1+
\mathbb E
\sup_{0\le\tau\le T}
\bigl(
\lVert u^n(\tau)\rVert_{L^2}^2
+
\lVert B^n(\tau)\rVert_{L^2}^2
\bigr)^2
\right]\,dr
\nonumber\\
&\quad\le
C_T\lvert t-s\rvert^2.
\label{9ngfe}
\end{align}
The same argument like \eqref{9ngfe} gives
\begin{align}
\mathbb E
\left\lVert
\int_s^t
P_n\sigma_2(r,u^n(r),B^n(r))\Pi_n^{(2)}\,dW_2(r)
\right\rVert_{H^{-1}}^4
\le
C_T\lvert t-s\rvert^2.
\label{yba2q}
\end{align}
By \eqref{9ngfe} and \eqref{yba2q}, for every $0<\alpha<1/4$, the Kolmogorov continuity theorem yields, 
\begin{equation}
\begin{aligned}
&\sup_{n\in\mathbb N}
\mathbb E
\Bigg[
\sup_{0\le s<t\le T}
\frac{
\left\lVert
\displaystyle
\int_s^t
P_n\sigma_1(r,u^n(r),B^n(r))\Pi_n^{(1)}\,dW_1(r)
\right\rVert_{H^{-1}}^4
}{
\lvert t-s\rvert^{4\alpha}
}
\Bigg]
\le C_{T,\alpha},
\\
&\sup_{n\in\mathbb N}
\mathbb E
\Bigg[
\sup_{0\le s<t\le T}
\frac{
\left\lVert
\displaystyle
\int_s^t
P_n\sigma_2(r,u^n(r),B^n(r))\Pi_n^{(2)}\,dW_2(r)
\right\rVert_{H^{-1}}^4
}{
\lvert t-s\rvert^{4\alpha}
}
\Bigg]
\le C_{T,\alpha}.
\label{ASK2}
\end{aligned}
\end{equation}
Taking $\alpha=1/8$ in \eqref{ASK2}, we obtain
\begin{align}
&\sup_{n\in\mathbb N}
\mathbb E
\Bigg[
\sup_{0\le s<t\le T}
\frac{
\left\lVert
\displaystyle
\int_s^t
P_n\sigma_1(r,u^n(r),B^n(r))\Pi_n^{(1)}\,dW_1(r)
\right\rVert_{H^{-1}}^4
}{
\lvert t-s\rvert^{1/2}
}
\Bigg]
\le C_T,
\label{7hv4V4}
\\
&\sup_{n\in\mathbb N}
\mathbb E
\Bigg[
\sup_{0\le s<t\le T}
\frac{
\left\lVert
\displaystyle
\int_s^t
P_n\sigma_2(r,u^n(r),B^n(r))\Pi_n^{(2)}\,dW_2(r)
\right\rVert_{H^{-1}}^4
}{
\lvert t-s\rvert^{1/2}
}
\Bigg]
\le C_T.
\label{yuRw}
\end{align}
Let $R_1>0$ be chosen as in \eqref{ncxzJ1} and \eqref{76cm}, and set
\begin{align}
K_{R_1}^n
:=
\Bigg\{
&
\sup_{0\le t\le T}
\bigl(
\lVert u^n(t)\rVert_{L^2}^2
+
\lVert B^n(t)\rVert_{L^2}^2
\bigr)
+
\int_0^T
\bigl(
\nu\lVert\partial_1u^n(t)\rVert_{L^2}^2
+
\eta\lVert\nabla B^n(t)\rVert_{L^2}^2
\bigr)\,dt
\le R_1,
\nonumber\\
&
\sup_{0\le t\le T}
e^{-h_n(t)}
\bigl(
\lVert\partial_2u^n(t)\rVert_{L^2}^2
+
\lVert\partial_2B^n(t)\rVert_{L^2}^2
\bigr)
\nonumber\\
&\qquad+
\int_0^T
e^{-h_n(t)}
\bigl(
\nu\lVert\partial_1\partial_2u^n(t)\rVert_{L^2}^2
+
\eta\lVert\nabla\partial_2B^n(t)\rVert_{L^2}^2
\bigr)\,dt
\le R_1
\Bigg\}.
\end{align}
Since
$
\lvert t-s\rvert^{1/2}
\le
T^{3/8}\lvert t-s\rvert^{1/8},
\quad 0\le s<t\le T,
$
\eqref{hlub3} and \eqref{Bhhy5} yield,
\begin{align}
&\sup_{0\le s<t\le T}
\frac{
\left\lVert
\displaystyle
\int_s^t
\Bigl[
\nu\partial_1^2u^n(r)
-
P_n\operatorname{div}\bigl(u^n(r)\otimes u^n(r)\bigr)
+
P_n\operatorname{div}\bigl(B^n(r)\otimes B^n(r)\bigr)
\Bigr]\,dr
\right\rVert_{H^{-1}}
}{
\lvert t-s\rvert^{1/8}
}
\le
C_{R_1,T,\nu,\eta},
\label{drv2}
\\
&\sup_{0\le s<t\le T}
\frac{
\left\lVert
\displaystyle
\int_s^t
\Bigl[
\eta\Delta B^n(r)
-
P_n\operatorname{div}\bigl(u^n(r)\otimes B^n(r)\bigr)
+
P_n\operatorname{div}\bigl(B^n(r)\otimes u^n(r)\bigr)
\Bigr]\,dr
\right\rVert_{H^{-1}}
}{
\lvert t-s\rvert^{1/8}
}
\le
C_{R_1,T,\nu,\eta}.
\label{Df2}
\end{align}
Combining \eqref{Cdf2} with \eqref{drv2}, and
\eqref{D1i} with \eqref{Df2}, gives
\begin{align}
&\lVert u^n\rVert_{C^{1/8}([0,T];H^{-1})}
+
\lVert B^n\rVert_{C^{1/8}([0,T];H^{-1})}
\nonumber\\
&\quad\le
C_{R_1,T,\nu,\eta}
+
\sup_{0\le s<t\le T}
\frac{
\left\lVert
\displaystyle
\int_s^t
P_n\sigma_1(r,u^n(r),B^n(r))\Pi_n^{(1)}\,dW_1(r)
\right\rVert_{H^{-1}}
}{
\lvert t-s\rvert^{1/8}
}
\nonumber\\
&\qquad+
\sup_{0\le s<t\le T}
\frac{
\left\lVert
\displaystyle
\int_s^t
P_n\sigma_2(r,u^n(r),B^n(r))\Pi_n^{(2)}\,dW_2(r)
\right\rVert_{H^{-1}}
}{
\lvert t-s\rvert^{1/8}
}.
\label{H1F1E}
\end{align}
Multiplying \eqref{H1F1E} by
$\mathbf 1_{K_{R_1}^n}$, taking expectations, and using
H\"older's inequality together with \eqref{7hv4V4} and
\eqref{yuRw}, we obtain
\begin{align}
\sup_{n\in\mathbb N}
\mathbb E
\Bigg[
\left(
\lVert u^n\rVert_{C^{1/8}([0,T];H^{-1})}
+
\lVert B^n\rVert_{C^{1/8}([0,T];H^{-1})}
\right)
\mathbf 1_{K_{R_1}^n}
\Bigg]
\le
C_{R_1,T,\nu,\eta}.
\label{MHDH1Bs}
\end{align}
By \eqref{ncxzJ1}, \eqref{76cm}, and Boole's inequality, we obtain,
\begin{align}
\mathbb P\bigl((K_{R_1}^n)^c\bigr)
<
\frac{2\varepsilon}{3\cdot2^T},
\qquad n\in\mathbb N.
\label{T1G1H9}
\end{align}
By Chebyshev's inequality and \eqref{MHDH1Bs}, we may
choose $R_2>0$, independent of $n$, such that
\begin{align}
&\mathbb P
\Bigg(
\Bigl\{
\lVert u^n\rVert_{C^{1/8}([0,T];H^{-1})}
+
\lVert B^n\rVert_{C^{1/8}([0,T];H^{-1})}
>R_2
\Bigr\}
\cap
K_{R_1}^n
\Bigg)
\nonumber\\
&\quad\le
\frac{1}{R_2}
\mathbb E
\Bigg[
\left(
\lVert u^n\rVert_{C^{1/8}([0,T];H^{-1})}
+
\lVert B^n\rVert_{C^{1/8}([0,T];H^{-1})}
\right)
\mathbf 1_{K_{R_1}^n}
\Bigg]
\nonumber\\
&\quad\le
\frac{C_{R_1,T,\nu,\eta}}{R_2}
<
\frac{\varepsilon}{3\cdot2^T},
\qquad n\in\mathbb N.
\label{TyPy4}
\end{align}
Set $R_0(T):=2R_1+R_2$. By the definition of
$K_{R_0(T)}^T$, together with Boole's inequality,
\eqref{T1G1H9}, and \eqref{TyPy4}, we obtain
\begin{align}
\mu_n(K_{R_0(T)}^T)
>
1-\frac{2\varepsilon}{3\cdot2^T}
-\frac{\varepsilon}{3\cdot2^T}
=
1-\frac{\varepsilon}{2^T},
\qquad n\in\mathbb N.
\label{TLRE1}
\end{align}
Thus \eqref{TGYG} is proved. Combining \eqref{GTRW} and
\eqref{TGYG}, the family
$\{\mu_n\}_{n\in\mathbb N}$ is tight on the global path space
$\mathcal Z$.
\end{proof}
\subsection*{Skorokhod-Jakubowski representation}
\noindent After proving the tightness of the laws of the Galerkin approximations,
we apply the Skorokhod-Jakubowski representation theorem to obtain
almost sure convergence on a new probability space.
By the tightness result established above, ${\mu_n}_{n\geq1}$ is tight on $\mathcal Z$. Since $\mathcal Z$ admits a countable family of continuous functions separating points, the Skorokhod-Jakubowski representation theorem yields a probability space $(\widetilde\Omega,\widetilde{\mathcal F},\widetilde{\mathbb P})$, a subsequence, still indexed by $n$, and $\mathcal Z$-valued random variables $(\widetilde u^n,\widetilde B^n)$ and $(\widetilde u,\widetilde B)$ such that
\begin{enumerate}
\item[(i)]
\begin{align}
\mathcal L_{\widetilde{\mathbb P}}
(\widetilde u^n,\widetilde B^n)
&=\mu_n,
\qquad n\ge1,
\label{yuh5}
\end{align}

\item[(ii)]
$\mu_n\rightharpoonup\widetilde\mu$ for some probability measure
$\widetilde\mu$;

\item[(iii)]
\begin{align}
(\widetilde u^n,\widetilde B^n)
&\longrightarrow
(\widetilde u,\widetilde B)
\qquad
\widetilde{\mathbb P}\text{-a.s. in }\mathcal Z,
\label{AST}
\end{align}
and
$\mathcal L_{\widetilde{\mathbb P}}
(\widetilde u,\widetilde B)=\widetilde\mu$.
\end{enumerate}
From \eqref{AST}, we have,
\begin{align}
\widetilde u^n
&\longrightarrow\widetilde u
&&\text{strongly in }C([0,T];H^{-1}),
\label{L1tildn}
\\
\widetilde B^n
&\longrightarrow\widetilde B
&&\text{strongly in }C([0,T];H^{-1}),
\label{L2bsH-1}
\\
\widetilde u^n
&\longrightarrow\widetilde u
&&\text{strongly in }L^2(0,T;H),
\label{TUL2H}
\\
\widetilde B^n
&\longrightarrow\widetilde B
&&\text{strongly in }L^2(0,T;H),
\label{TBL2H2}
\\
\widetilde u^n
&\rightharpoonup\widetilde u
&&\text{weakly in }L^2(0,T;H^{1,0}),
\label{G4}
\\
\widetilde u^n
&\rightharpoonup\widetilde u
&&\text{weakly in }L^2(0,T;H^{1,1}),
\label{we34}
\\
\widetilde u^n
&\overset{\ast}{\rightharpoonup}\widetilde u
&&\text{in }L^\infty(0,T;H^{0,1}),
\label{8b23z}
\\
\widetilde B^n
&\rightharpoonup\widetilde B
&&\text{weakly in }L^2(0,T;H^1),
\label{H56d}
\\
\widetilde B^n
&\rightharpoonup\widetilde B
&&\text{weakly in }L^2(0,T;H^{1,1}),
\label{H6yu}
\\
\widetilde B^n
&\rightharpoonup\widetilde B
&&\text{weakly in }L^2(0,T;H^{0,2}),
\label{H123}
\\
\widetilde B^n
&\overset{\ast}{\rightharpoonup}\widetilde B
&&\text{in }L^\infty(0,T;H^{0,1}),
\label{y6Fr}
\end{align}
$\widetilde{\mathbb P}$-a.s.
By \eqref{yuh5}, the a priori estimates for
$(u^n,B^n)$ transfer to
$(\widetilde u^n,\widetilde B^n)$. Hence
\eqref{L2engy}, \eqref{L^4-mo}, and \eqref{vwre} yield
\begin{align}
\sup_{n\ge1}\widetilde{\mathbb E}
\Bigg[
&\sup_{0\le t\le T}
\bigl(
\lVert\widetilde u^n(t)\rVert_{L^2}^2
+
\lVert\widetilde B^n(t)\rVert_{L^2}^2
\bigr)+
\int_0^T
\bigl(
\nu\lVert\partial_1\widetilde u^n\rVert_{L^2}^2
+
\eta\lVert\nabla\widetilde B^n\rVert_{L^2}^2
\bigr)\,dt
\Bigg]
\le C_T,
\label{olTilde}
\\
\sup_{n\ge1}\widetilde{\mathbb E}
\Bigg[
&\sup_{0\le t\le T}
\bigl(
\lVert\widetilde u^n(t)\rVert_{L^2}^2
+
\lVert\widetilde B^n(t)\rVert_{L^2}^2
\bigr)^2
\nonumber\\
&+
\int_0^T
\bigl(
\lVert\widetilde u^n\rVert_{L^2}^2
+
\lVert\widetilde B^n\rVert_{L^2}^2
\bigr)
\bigl(
\nu\lVert\partial_1\widetilde u^n\rVert_{L^2}^2
+
\eta\lVert\nabla\widetilde B^n\rVert_{L^2}^2
\bigr)\,dt
\Bigg]
\le C_T,
\label{Tilvg}
\\
\sup_{n\ge1}\widetilde{\mathbb E}
\Bigg[
&\sup_{0\le t\le T}
e^{-\widetilde h_n(t)}
\bigl(
\lVert\partial_2\widetilde u^n(t)\rVert_{L^2}^2
+
\lVert\partial_2\widetilde B^n(t)\rVert_{L^2}^2
\bigr)
\nonumber\\
&+
\int_0^T e^{-\widetilde h_n(t)}
\bigl(
\nu\lVert\partial_1\partial_2\widetilde u^n\rVert_{L^2}^2
+
\eta\lVert\nabla\partial_2\widetilde B^n\rVert_{L^2}^2
\bigr)\,dt
\Bigg]
\le C_T.
\label{ujTILF}
\end{align}

By \eqref{G4}-\eqref{y6Fr},
\eqref{olTilde}-\eqref{ujTILF}, weak
and $\textrm{weak}^{\ast}$ lower semicontinuity,
\begin{align}
\widetilde{\mathbb E}
\int_0^T
\lVert\widetilde u(t)\rVert_{H^{1,0}}^2\,dt
&\le C_T,
\label{L1}
\\
\widetilde{\mathbb E}
\int_0^T
\lVert\widetilde u(t)\rVert_{H^{1,1}}^2\,dt
&\le C_T,
\label{L2h11}
\\
\widetilde{\mathbb E}
\lVert\widetilde u\rVert_{L^\infty(0,T;H^{0,1})}^2
&\le C_T,
\label{L3H01}
\\
\widetilde{\mathbb E}
\int_0^T
\lVert\widetilde B(t)\rVert_{H^1}^2\,dt
&\le C_T,
\label{L7H1}
\\
\widetilde{\mathbb E}
\int_0^T
\lVert\widetilde B(t)\rVert_{H^{1,1}}^2\,dt
&\le C_T,
\label{L4Bh11}
\\
\widetilde{\mathbb E}
\int_0^T
\lVert\widetilde B(t)\rVert_{H^{0,2}}^2\,dt
&\le C_T,
\label{L8bH02}
\\
\widetilde{\mathbb E}
\lVert\widetilde B\rVert_{L^\infty(0,T;H^{0,1})}^2
&\le C_T,
\label{L5H01}
\end{align}
consequently,
\begin{align}
\widetilde u
&\in
C([0,T];H^{-1})
\cap L^2(0,T;H)
\cap L^2(0,T;H^{1,0})
\cap L^2(0,T;H^{1,1})
\cap L^\infty(0,T;H^{0,1}),
\quad
\widetilde{\mathbb P}\text{-a.s.},
\label{L1C2}
\\
\widetilde B
&\in
C([0,T];H^{-1})
\cap L^2(0,T;H)
\cap L^2(0,T;H^1)
\cap L^2(0,T;H^{1,1})
\cap L^2(0,T;H^{0,2})
\nonumber\\
&
\cap L^\infty(0,T;H^{0,1}),
\quad
\widetilde{\mathbb P}\text{-a.s.}
\label{K1M3}
\end{align}
From \eqref{L1C2},
\eqref{K1M3}, and \eqref{Xu_Tig}-\eqref{XB_Tig}, we obtain
\begin{align}
\widetilde u\in\mathcal X_u,\qquad
\widetilde B\in\mathcal X_B,\qquad
(\widetilde u,\widetilde B)\in\mathcal Z,
\qquad
\widetilde{\mathbb P}\text{-a.s.}
\end{align}
\paragraph{Strong convergence in $L^2(0,T;L^4)$.}
Since weakly convergent sequences are bounded, \eqref{we34} and
\eqref{H6yu}, together with
$H^{1,1}(\mathbb T^2)\hookrightarrow H^1(\mathbb T^2)$, yield
$\sup_{n\ge1}\lVert\widetilde u^n-\widetilde u\rVert_{L^2(0,T;H^1)}<\infty$
and
$\sup_{n\ge1}\lVert\widetilde B^n-\widetilde B\rVert_{L^2(0,T;H^1)}<\infty$.
By the two-dimensional Gagliardo-Nirenberg inequality and
Cauchy-Schwarz in time,
\begin{align}
\lVert\widetilde u^n-\widetilde u\rVert_{L^2(0,T;L^4)}^2
&\le
C
\lVert\widetilde u^n-\widetilde u\rVert_{L^2(0,T;L^2)}
\lVert\widetilde u^n-\widetilde u\rVert_{L^2(0,T;H^1)}
\longrightarrow0,
\\
\lVert\widetilde B^n-\widetilde B\rVert_{L^2(0,T;L^4)}^2
&\le
C
\lVert\widetilde B^n-\widetilde B\rVert_{L^2(0,T;L^2)}
\lVert\widetilde B^n-\widetilde B\rVert_{L^2(0,T;H^1)}
\longrightarrow 0,
\end{align}
from where we get more precisely
\begin{align}
\widetilde u^n
&\longrightarrow
\widetilde u
&&\text{strongly in }L^2(0,T;L^4),
\label{KLNG6}
\\
\widetilde B^n
&\longrightarrow
\widetilde B
&&\text{strongly in }L^2(0,T;L^4),
\label{km56C}
\end{align}
$\widetilde{\mathbb P}$-a.s. 

\noindent Applying H\"older's inequality, using $
\sup_{n\ge1}
\lVert\widetilde u^n\rVert_{L^2(0,T;L^4)}
<\infty$ and $
\sup_{n\ge1}
\lVert\widetilde B^n\rVert_{L^2(0,T;L^4)}
<\infty,$  \eqref{KLNG6}, \eqref{km56C}, 
 we obtain
\begin{align}
&
\lVert
\widetilde u^n\otimes\widetilde u^n
-
\widetilde u\otimes\widetilde u
\rVert_{L^1(0,T;L^2)}
\nonumber\\
&\qquad\le
\lVert\widetilde u^n-\widetilde u\rVert_{L^2(0,T;L^4)}
\bigl(
\lVert\widetilde u^n\rVert_{L^2(0,T;L^4)}
+
\lVert\widetilde u\rVert_{L^2(0,T;L^4)}
\bigr)
\longrightarrow0,
\\
&
\lVert
\widetilde B^n\otimes\widetilde B^n
-
\widetilde B\otimes\widetilde B
\rVert_{L^1(0,T;L^2)}
\nonumber\\
&\qquad\le
\lVert\widetilde B^n-\widetilde B\rVert_{L^2(0,T;L^4)}
\bigl(
\lVert\widetilde B^n\rVert_{L^2(0,T;L^4)}
+
\lVert\widetilde B\rVert_{L^2(0,T;L^4)}
\bigr)
\longrightarrow0,
\\
&
\lVert
\widetilde u^n\otimes\widetilde B^n
-
\widetilde u\otimes\widetilde B
\rVert_{L^1(0,T;L^2)}
\nonumber\\
&\qquad\le
\lVert\widetilde u^n-\widetilde u\rVert_{L^2(0,T;L^4)}
\lVert\widetilde B^n\rVert_{L^2(0,T;L^4)}
+
\lVert\widetilde u\rVert_{L^2(0,T;L^4)}
\lVert\widetilde B^n-\widetilde B\rVert_{L^2(0,T;L^4)}
\longrightarrow0,
\\
&
\lVert
\widetilde B^n\otimes\widetilde u^n
-
\widetilde B\otimes\widetilde u
\rVert_{L^1(0,T;L^2)}
\nonumber\\
&\qquad\le
\lVert\widetilde B^n-\widetilde B\rVert_{L^2(0,T;L^4)}
\lVert\widetilde u^n\rVert_{L^2(0,T;L^4)}
+
\lVert\widetilde B\rVert_{L^2(0,T;L^4)}
\lVert\widetilde u^n-\widetilde u\rVert_{L^2(0,T;L^4)}
\longrightarrow0,
\end{align}
from where  we obtain precisely
\begin{align}
\widetilde u^n\otimes\widetilde u^n
&\longrightarrow
\widetilde u\otimes\widetilde u
&&\text{strongly in }L^1(0,T;L^2),
\label{UUREW}
\\
\widetilde B^n\otimes\widetilde B^n
&\longrightarrow
\widetilde B\otimes\widetilde B
&&\text{strongly in }L^1(0,T;L^2),
\label{BBCRT}
\\
\widetilde u^n\otimes\widetilde B^n
&\longrightarrow
\widetilde u\otimes\widetilde B
&&\text{strongly in }L^1(0,T;L^2),
\label{TTERDS}
\\
\widetilde B^n\otimes\widetilde u^n
&\longrightarrow
\widetilde B\otimes\widetilde u
&&\text{strongly in }L^1(0,T;L^2).
\label{BYTW}
\end{align}
$\widetilde{\mathbb P}$-a.s.
In addition, using $\lVert \operatorname{div} A\rVert_{H^{-1}}\leq \lVert A\rVert_{L^2}$, \eqref{UUREW}, \eqref{BBCRT}, \eqref{TTERDS}, and \eqref{BYTW},
we obtain
\begin{align}
&
\lVert
\operatorname{div}
(\widetilde u^n\otimes\widetilde u^n)
-
\operatorname{div}
(\widetilde u\otimes\widetilde u)
\rVert_{L^1(0,T;H^{-1})}
\le
\lVert
\widetilde u^n\otimes\widetilde u^n
-
\widetilde u\otimes\widetilde u
\rVert_{L^1(0,T;L^2)}
\longrightarrow0,
\\
&
\lVert
\operatorname{div}
(\widetilde B^n\otimes\widetilde B^n)
-
\operatorname{div}
(\widetilde B\otimes\widetilde B)
\rVert_{L^1(0,T;H^{-1})}\le
\lVert
\widetilde B^n\otimes\widetilde B^n
-
\widetilde B\otimes\widetilde B
\rVert_{L^1(0,T;L^2)}
\longrightarrow0,
\\
&
\lVert
\operatorname{div}
(\widetilde u^n\otimes\widetilde B^n)
-
\operatorname{div}
(\widetilde u\otimes\widetilde B)
\rVert_{L^1(0,T;H^{-1})}
\le
\lVert
\widetilde u^n\otimes\widetilde B^n
-
\widetilde u\otimes\widetilde B
\rVert_{L^1(0,T;L^2)}
\longrightarrow0,
\\
&
\lVert
\operatorname{div}
(\widetilde B^n\otimes\widetilde u^n)
-
\operatorname{div}
(\widetilde B\otimes\widetilde u)
\rVert_{L^1(0,T;H^{-1})}
\le
\lVert
\widetilde B^n\otimes\widetilde u^n
-
\widetilde B\otimes\widetilde u
\rVert_{L^1(0,T;L^2)}
\longrightarrow0,
\end{align}
therefore, $\widetilde{\mathbb P}$-a.s., we obtain
\begin{align}
\operatorname{div}
(\widetilde u^n\otimes\widetilde u^n)
&\longrightarrow
\operatorname{div}
(\widetilde u\otimes\widetilde u)
&&\text{strongly in }L^1(0,T;H^{-1}),
\label{URE76}
\\
\operatorname{div}
(\widetilde B^n\otimes\widetilde B^n)
&\longrightarrow
\operatorname{div}
(\widetilde B\otimes\widetilde B)
&&\text{strongly in }L^1(0,T;H^{-1}),
\label{BBrt3}
\\
\operatorname{div}
(\widetilde u^n\otimes\widetilde B^n)
&\longrightarrow
\operatorname{div}
(\widetilde u\otimes\widetilde B)
&&\text{strongly in }L^1(0,T;H^{-1}),
\label{uy56}
\\
\operatorname{div}
(\widetilde B^n\otimes\widetilde u^n)
&\longrightarrow
\operatorname{div}
(\widetilde B\otimes\widetilde u)
&&\text{strongly in }L^1(0,T;H^{-1}).
\label{BUR3w}
\end{align}
We first consider the first term on the right-hand side of \eqref{KL4}, 
\begin{align*}
&
\int_0^t
\left\langle
\partial_1^2\widetilde u^n(s),
P_n\phi
\right\rangle_{H^{-1},H^1}\,ds
-
\int_0^t
\left\langle
\partial_1^2\widetilde u(s),
\phi
\right\rangle_{H^{-1},H^1}\,ds
\nonumber\\
&=
\int_0^t
\left\langle
\partial_1^2\widetilde u^n(s)
-
\partial_1^2\widetilde u(s),
\phi
\right\rangle_{H^{-1},H^1}\,ds
+
\int_0^t
\left\langle
\partial_1^2\widetilde u^n(s),
P_n\phi-\phi
\right\rangle_{H^{-1},H^1}\,ds,
\end{align*}
where we used
$\partial_1^2\widetilde u^n\rightharpoonup
\partial_1^2\widetilde u$ weakly in $L^2(0,T;H^{-1})$ for the first
term on the right-hand side; and $P_n\phi\to\phi$ strongly in $H^1$,
together with the boundedness of
$\{\partial_1^2\widetilde u^n\}_{n\geq1}$ in
$L^2(0,T;H^{-1})$, for the second term on the right-hand side, which
imply that
\begin{align}
\int_0^t
\left\langle
\partial_1^2\widetilde u^n(s),
P_n\phi
\right\rangle_{H^{-1},H^1}\,ds
\longrightarrow
\int_0^t
\left\langle
\partial_1^2\widetilde u(s),
\phi
\right\rangle_{H^{-1},H^1}\,ds.
\label{GHTdoF}
\end{align}
For the 2nd term on the right hand side of \eqref{KL4}, 
\begin{align*}
&
\int_0^t
\left\langle
\operatorname{div}
(\widetilde u^n\otimes\widetilde u^n)(s),
P_n\phi
\right\rangle_{H^{-1},H^1}\,ds
-
\int_0^t
\left\langle
\operatorname{div}
(\widetilde u\otimes\widetilde u)(s),
\phi
\right\rangle_{H^{-1},H^1}\,ds
\nonumber\\
&=
\int_0^t
\left\langle
\operatorname{div}
(\widetilde u^n\otimes\widetilde u^n)(s)
-
\operatorname{div}
(\widetilde u\otimes\widetilde u)(s),
P_n\phi
\right\rangle_{H^{-1},H^1}\,ds
\nonumber\\
&\quad
+
\int_0^t
\left\langle
\operatorname{div}
(\widetilde u\otimes\widetilde u)(s),
P_n\phi-\phi
\right\rangle_{H^{-1},H^1}\,ds,
\end{align*}
where we used \eqref{URE76} and $P_n\phi\to\phi$ strongly in $H^1$, which implies that
\begin{align}
\int_0^t
\left\langle
\operatorname{div}
(\widetilde u^n\otimes\widetilde u^n)(s),
P_n\phi
\right\rangle_{H^{-1},H^1}\,ds
\longrightarrow
\int_0^t
\left\langle
\operatorname{div}
(\widetilde u\otimes\widetilde u)(s),
\phi
\right\rangle_{H^{-1},H^1}\,ds.
\label{UUrbt}
\end{align}
Exactly the same argument, using
\eqref{BBrt3}, and $P_n\phi\to\phi$ strongly in $H^1$, gives
\begin{align}
\int_0^t
\left\langle
\operatorname{div}
(\widetilde B^n\otimes\widetilde B^n)(s),
P_n\phi
\right\rangle_{H^{-1},H^1}\,ds
\longrightarrow
\int_0^t
\left\langle
\operatorname{div}
(\widetilde B\otimes\widetilde B)(s),
\phi
\right\rangle_{H^{-1},H^1}\,ds.
\label{UGeBB}
\end{align}
Combining
\eqref{GHTdoF},
\eqref{UUrbt}, and
\eqref{UGeBB}, we obtain
\begin{align}
\int_0^t
\left\langle
F_1(\widetilde u^n(s),\widetilde B^n(s)),
P_n\phi
\right\rangle_{H^{-1},H^1}\,ds
\longrightarrow
\int_0^t
\left\langle
F_1(\widetilde u(s),\widetilde B(s)),
\phi
\right\rangle_{H^{-1},H^1}\,ds.
\label{FR1Con}
\end{align}
For the 1st term on the right hand side of \eqref{P2K}, we have
\begin{align}
&
\int_0^t
\left\langle
\Delta\widetilde B^n(s),
P_n\psi
\right\rangle_{H^{-1},H^1}\,ds
-
\int_0^t
\left\langle
\Delta\widetilde B(s),
\psi
\right\rangle_{H^{-1},H^1}\,ds
\nonumber\\
&=
\int_0^t
\left\langle
\Delta\widetilde B^n(s)
-
\Delta\widetilde B(s),
\psi
\right\rangle_{H^{-1},H^1}\,ds
+
\int_0^t
\left\langle
\Delta\widetilde B^n(s),
P_n\psi-\psi
\right\rangle_{H^{-1},H^1}\,ds.
\label{NBTUl},
\end{align}
where the first term converges to zero by
$\Delta \tilde{B}^n\rightharpoonup \Delta \tilde{B}^n\, \textrm{weakly in}\, L^2(0,T;H^{-1})$ and the second term converges to zero by $P_n\psi\to\psi$, 
therefore, we have,
\begin{align}
\int_0^t
\left\langle
\Delta\widetilde B^n(s),
P_n\psi
\right\rangle_{H^{-1},H^1}\,ds
\longrightarrow
\int_0^t
\left\langle
\Delta\widetilde B(s),
\psi
\right\rangle_{H^{-1},H^1}\,ds.
\label{D1W12s}
\end{align}
Similarly, since $P_n \psi \to \psi\, \textrm{strongly in}\,  H^1$, using
\eqref{uy56}, and 
\eqref{BUR3w},
 we obtain
\begin{align}
\int_0^t
\left\langle
\operatorname{div}
(\widetilde u^n\otimes\widetilde B^n)(s),
P_n\psi
\right\rangle_{H^{-1},H^1}\,ds
&\longrightarrow
\int_0^t
\left\langle
\operatorname{div}
(\widetilde u\otimes\widetilde B)(s),
\psi
\right\rangle_{H^{-1},H^1}\,ds,
\label{we23a}
\\
\int_0^t
\left\langle
\operatorname{div}
(\widetilde B^n\otimes\widetilde u^n)(s),
P_n\psi
\right\rangle_{H^{-1},H^1}\,ds
&\longrightarrow
\int_0^t
\left\langle
\operatorname{div}
(\widetilde B\otimes\widetilde u)(s),
\psi
\right\rangle_{H^{-1},H^1}\,ds.
\label{gtry}
\end{align}
Combining
\eqref{D1W12s},
\eqref{we23a}, and
\eqref{gtry}, we conclude that
\begin{align}
\int_0^t
\left\langle
F_2(\widetilde u^n(s),\widetilde B^n(s)),
P_n\psi
\right\rangle_{H^{-1},H^1}\,ds
\longrightarrow
\int_0^t
\left\langle
F_2(\widetilde u(s),\widetilde B(s)),
\psi
\right\rangle_{H^{-1},H^1}\,ds.
\label{F2CO}
\end{align}
Following the uniform $L^2$- and $L^4$-estimates, the tightness of
$\{\mu_n\}_{n\geq1}$ on $\mathcal Z$, and the
Skorokhod-Jakubowski representation, we now prove
Theorem~\ref{thm10}.

\medskip

\subsection*{Proof of Theorem~\ref{thm10} (Part I: Existence of a global martingale solution)}
\paragraph{\textbf{Verification of (M1).}} Define
\begin{align}
\widehat{\mathbb P}
:=
\mathcal L_{\widetilde{\mathbb P}}(\widetilde u,\widetilde B)
\in\mathcal P(\bar\Omega),
\qquad
\bar\Omega=\mathcal Z.
\label{LLaW}
\end{align}
 By \eqref{GIL} and the equality of laws $\mathcal L_{\widetilde{\mathbb P}}
(\widetilde u^n,\widetilde B^n)
=
\mathcal L_{\mathbb P}
(u^n,B^n)$,
$$
\widetilde u^n(0)=P_nu_0,
\qquad
\widetilde B^n(0)=P_nB_0,
\qquad
\widetilde{\mathbb P}\text{-a.s.}
$$
Since $P_nu_0\to u_0$ and $P_nB_0\to B_0$ strongly in $H$, hence in
$H^{-1}$, it follows from \eqref{L1tildn} and \eqref{L2bsH-1} that
$$
\widetilde u^n(0)\to\widetilde u(0),
\qquad
\widetilde B^n(0)\to\widetilde B(0)
\quad\text{in }H^{-1},
\qquad
\widetilde{\mathbb P}\text{-a.s.},
$$
uniqueness of limits in $H^{-1}$ yields
\begin{align}
\widetilde u(0)=u_0,
\qquad
\widetilde B(0)=B_0,
\qquad
\widetilde{\mathbb P}\text{-a.s.}
\label{hj21x}
\end{align}
Hence, by \eqref{LLaW}, \eqref{hj21x}, we obtain
\begin{align}
\widehat{\mathbb P}
\left(
(u(0),B(0))=(u_0,B_0)
\right)
=1.
\label{M1-D1}
\end{align}
By \eqref{L1C2}, \eqref{K1M3}, and \eqref{LLaW},
\begin{align}
\widehat{\mathbb P}
\left(
\sup_{0\le t\le T}
\lVert u(t)\rVert_{H^{0,1}}<\infty,\;
\int_0^T
\lVert u(t)\rVert_{H^{1,1}}^2\,dt<\infty
\right)
&=1,
\label{M1-R1}
\\
\widehat{\mathbb P}
\left(
\sup_{0\le t\le T}
\lVert B(t)\rVert_{H^{0,1}}<\infty,\;
\int_0^T
\left(
\lVert B(t)\rVert_{H^{0,2}}^2
+
\lVert B(t)\rVert_{H^{1,1}}^2
\right)\,dt<\infty
\right)
&=1.
\label{Bg12r}
\end{align}
Moreover, by \eqref{L2h11}, \eqref{L4Bh11},
\eqref{L3H01}, \eqref{L5H01}, \eqref{L7H1}, and \eqref{L8bH02},
\begin{align}
\widetilde{\mathbb E}
\int_0^T
\left(
\lVert\widetilde u(t)\rVert_{H^{1,1}}^2
+
\lVert\widetilde B(t)\rVert_{H^{1,1}}^2
\right)\,dt
&\le C_T,
\label{674F3}
\\
\widetilde{\mathbb E}
\left[
\lVert\widetilde u\rVert_{L^\infty(0,T;H^{0,1})}^2
+
\lVert\widetilde B\rVert_{L^\infty(0,T;H^{0,1})}^2
\right]
&\le C_T,
\label{t532}
\\
\widetilde{\mathbb E}
\int_0^T
\left(
\lVert\widetilde B(t)\rVert_{H^1}^2
+
\lVert\widetilde B(t)\rVert_{H^{0,2}}^2
\right)\,dt
&\le C_T.
\label{y6de}
\end{align}
By \eqref{LLaW}, the same estimates \eqref{674F3}-\eqref{y6de} hold under
$\widehat{\mathbb P}$ for $(u,B)$. In particular, \eqref{674F3} yields
\begin{align}
\widehat{\mathbb E}
\int_0^T
\left(
\lVert u(t)\rVert_{H^{1,1}}^2
+
\lVert B(t)\rVert_{H^{1,1}}^2
\right)\,dt
&=
\widetilde{\mathbb E}
\int_0^T
\left(
\lVert \widetilde u(t)\rVert_{H^{1,1}}^2
+
\lVert \widetilde B(t)\rVert_{H^{1,1}}^2
\right)\,dt
\nonumber\\
&\leq
C_T,
\label{kMMwrk}
\end{align}
while \eqref{t532} gives
\begin{align}
\widehat{\mathbb E}
\left[
\lVert u\rVert_{L^\infty(0,T;H^{0,1})}^2
+
\lVert B\rVert_{L^\infty(0,T;H^{0,1})}^2
\right]
&=
\widetilde{\mathbb E}
\left[
\lVert \widetilde u\rVert_{L^\infty(0,T;H^{0,1})}^2
+
\lVert \widetilde B\rVert_{L^\infty(0,T;H^{0,1})}^2
\right]
\nonumber\\
&\leq
C_T.
\end{align}
Finally, by \eqref{y6de}, we obtain
$$
\widehat{\mathbb E}
\int_0^T
\left(
\lVert B(t)\rVert_{H^1}^2
+
\lVert B(t)\rVert_{H^{0,2}}^2
\right)\,dt
=
\widetilde{\mathbb E}
\int_0^T
\left(
\lVert \widetilde B(t)\rVert_{H^1}^2
+
\lVert \widetilde B(t)\rVert_{H^{0,2}}^2
\right)\,dt
\leq
C_T.
$$
Moreover, by 
\eqref{L2h11},
\eqref{KLNG6}, and
\eqref{km56C}, we have
\begin{align}
&
\int_0^T
\left\lVert 
F_1\bigl(\widetilde u(s),\widetilde B(s)\bigr)
\right\rVert_{H^{-1}}\,ds
\nonumber\\
&\leq
\nu
\int_0^T
\left\lVert 
\partial_1^2\widetilde u(s)
\right\rVert_{H^{-1}}\,ds
+
\int_0^T
\left\lVert 
\operatorname{div}
\bigl(\widetilde u(s)\otimes\widetilde u(s)\bigr)
\right\rVert_{H^{-1}}\,ds
+
\int_0^T
\left\lVert 
\operatorname{div}
\bigl(\widetilde B(s)\otimes\widetilde B(s)\bigr)
\right\rVert_{H^{-1}}\,ds
\nonumber\\
&\leq
\nu T^{1/2}
\lVert \widetilde u\rVert_{L^2(0,T;H^1)}
+
\int_0^T
\lVert \widetilde u(s)\rVert_{L^4}^2\,ds
+
\int_0^T
\lVert \widetilde B(s)\rVert_{L^4}^2\,ds
\nonumber\\
&=
\nu T^{1/2}
\lVert \widetilde u\rVert_{L^2(0,T;H^1)}
+
\lVert \widetilde u\rVert_{L^2(0,T;L^4)}^2
+
\lVert \widetilde B\rVert_{L^2(0,T;L^4)}^2
<\infty,
\qquad
\widetilde{\mathbb P}\text{-a.s.}
\label{F1IC}
\end{align}
Moreover, by
\eqref{K1M3},
\eqref{KLNG6}, and
\eqref{km56C}, we have
\begin{align}
\int_0^T
\left\lVert 
F_2\bigl(\widetilde u(s),\widetilde B(s)\bigr)
\right\rVert_{H^{-1}}\,ds
&\leq
\eta
\int_0^T
\left\lVert 
\Delta\widetilde B(s)
\right\rVert_{H^{-1}}\,ds
+
\int_0^T
\left\lVert 
\operatorname{div}
\bigl(\widetilde u(s)\otimes\widetilde B(s)\bigr)
\right\rVert_{H^{-1}}\,ds
\nonumber\\
&\quad+
\int_0^T
\left\lVert 
\operatorname{div}
\bigl(\widetilde B(s)\otimes\widetilde u(s)\bigr)
\right\rVert_{H^{-1}}\,ds
\nonumber\\
&\leq
\eta T^{1/2}
\lVert \widetilde B\rVert_{L^2(0,T;H^1)}
+
\int_0^T
\lVert \widetilde u(s)\rVert_{L^4}
\lVert \widetilde B(s)\rVert_{L^4}\,ds
\nonumber\\
&\quad+
\int_0^T
\lVert \widetilde B(s)\rVert_{L^4}
\lVert \widetilde u(s)\rVert_{L^4}\,ds
\nonumber\\
&\leq
\eta T^{1/2}
\lVert \widetilde B\rVert_{L^2(0,T;H^1)}
+
2
\lVert \widetilde u\rVert_{L^2(0,T;L^4)}
\lVert \widetilde B\rVert_{L^2(0,T;L^4)}
<\infty,
\nonumber\\[-1mm]
&\hspace{45mm}
\widetilde{\mathbb P}\text{-a.s.}
\label{F2IC}
\end{align}
Therefore, from \eqref{F1IC} and  \eqref{F2IC}, we obtain
\begin{align}
F_1(\widetilde u,\widetilde B),\quad
F_2(\widetilde u,\widetilde B)
\in L^1(0,T;H^{-1}),
\qquad
\widetilde{\mathbb P}\text{-a.s.}
\label{w34B}
\end{align}
and consequently, by \eqref{LLaW}, we obtain
\begin{align}
\widehat{\mathbb P}
\left(
\int_0^T
\left(
\lVert F_1(u,B)\rVert_{H^{-1}}
+
\lVert F_2(u,B)\rVert_{H^{-1}}
\right)\,dt<\infty
\right)
=1.
\label{M1-I}
\end{align}
It remains to verify the integrability of the $\sigma_1$ and $\sigma_2$.
Using \eqref{nL2} together with
\eqref{L3H01}, \eqref{L2h11}, \eqref{L5H01}, and \eqref{L7H1},
we obtain
\begin{align}
&
\widetilde{\mathbb E}
\int_0^T
\Bigl(
\lVert
\sigma_1(t,\widetilde u(t),\widetilde B(t))
\rVert_{L_2(\mathfrak U_1;H)}^2
+
\lVert
\sigma_2(t,\widetilde u(t),\widetilde B(t))
\rVert_{L_2(\mathfrak U_2;H)}^2
\Bigr)\,dt
\nonumber\\
&\le
K_0T
+
K_1\widetilde{\mathbb E}
\int_0^T
\left(
\lVert\widetilde u(t)\rVert_{L^2}^2
+
\lVert\widetilde B(t)\rVert_{L^2}^2
\right)\,dt
\nonumber\\
&\quad+
K_2\widetilde{\mathbb E}
\int_0^T
\left(
\nu\lVert\partial_1\widetilde u(t)\rVert_{L^2}^2
+
\eta\lVert\nabla\widetilde B(t)\rVert_{L^2}^2
\right)\,dt
\le C_T.
\label{LIM1}
\end{align}
By \eqref{yuh5}, from \eqref{LIM1}, we obtain
\begin{align}
&\widehat{\mathbb E}\int_0^T
\left(
\lVert \sigma_1(t,u(t),B(t))\rVert_{L_2(\mathfrak U_1;H)}^2
+
\lVert \sigma_2(t,u(t),B(t))\rVert_{L_2(\mathfrak U_2;H)}^2
\right)\,dt
\nonumber\\
&\qquad=
\widetilde{\mathbb E}\int_0^T
\left(
\lVert \sigma_1(t,\widetilde u(t),\widetilde B(t))\rVert_{L_2(\mathfrak U_1;H)}^2
+
\lVert \sigma_2(t,\widetilde u(t),\widetilde B(t))\rVert_{L_2(\mathfrak U_2;H)}^2
\right)\,dt
\le C_T.
\label{HGRT1}
\end{align}
Since the integrand is nonnegative,
\eqref{LIM1} implies its time integral is finite
$\widetilde{\mathbb P}$-a.s. Hence, by \eqref{LLaW},
\begin{align}
\widehat{\mathbb P}
\Bigg(
\int_0^T
\Bigl(
\lVert\sigma_1(t,u(t),B(t))
\rVert_{L_2(\mathfrak U_1;H)}^2
+
\lVert\sigma_2(t,u(t),B(t))
\rVert_{L_2(\mathfrak U_2;H)}^2
\Bigr)\,dt<\infty
\Bigg)
=1.
\label{M1-N1}
\end{align}
Combining \eqref{M1-D1}, \eqref{M1-R1},
\eqref{Bg12r}, \eqref{M1-I}, and \eqref{M1-N1},
we conclude that $\widehat{\mathbb P}$ satisfies
Definition~\ref{DEF}(M1).
\medskip

\noindent
We next verify Definition~\ref{DEF}{\rm (M2)}. We decompose the
verification into four parts:
\begin{itemize}
\item[{\rm (1)}] We prove that
$M_{u,n}^{\phi}(t)\to M_u^\phi(t)$ and
$M_{B,n}^{\psi}(t)\to M_B^\psi(t)$.

\item[{\rm (2)}] We prove that
$M_u^\phi$ and $M_B^\psi$ are continuous martingales.

\item[{\rm (3)}] We prove that
$M_u^\phi(t),M_B^\psi(t)\in L^2(\widetilde\Omega)$.

\item[{\rm (4)}] We prove that
$$
\left\langle\!\left\langle M_u^\phi\right\rangle\!\right\rangle_t
=
\int_0^t
\left\lVert
\sigma_1^*(s,\widetilde u,\widetilde B)\phi
\right\rVert_{\mathfrak U_1}^2\,ds,
\qquad
\left\langle\!\left\langle M_B^\psi\right\rangle\!\right\rangle_t
=
\int_0^t
\left\lVert
\sigma_2^*(s,\widetilde u,\widetilde B)\psi
\right\rVert_{\mathfrak U_2}^2\,ds,
$$
and
$$
\left\langle\!\left\langle
M_u^\phi,M_B^\psi
\right\rangle\!\right\rangle_t=0.
$$
\end{itemize}
\noindent
\textbf{Part 1. Convergence of $M_{u,n}^{\phi}(t)$ and
$M_{B,n}^{\psi}(t)$.}

\noindent  Recalling
\eqref{def-F1}-\eqref{def-F2}, define
\begin{align}
M_{u,n}^{\phi}(t)
:={}&
\bigl(\widetilde u^n(t),\phi\bigr)_H
-
\bigl(P_nu_0,\phi\bigr)_H
-
\int_0^t
\left\langle
F_1\bigl(\widetilde u^n(s),\widetilde B^n(s)\bigr),
P_n\phi
\right\rangle_{H^{-1},H^1}\,ds,
\label{JKLT23}
\\
M_{B,n}^{\psi}(t)
:={}&
\bigl(\widetilde B^n(t),\psi\bigr)_H
-
\bigl(P_nB_0,\psi\bigr)_H
-
\int_0^t
\left\langle
F_2\bigl(\widetilde u^n(s),\widetilde B^n(s)\bigr),
P_n\psi
\right\rangle_{H^{-1},H^1}\,ds,
\label{123HKY}
\end{align}
correspondingly, we define as follows
\begin{align}
M_u^\phi(t)
:={}&
\bigl(\widetilde u(t),\phi\bigr)_H
-
\bigl(u_0,\phi\bigr)_H
-
\int_0^t
\left\langle
F_1\bigl(\widetilde u(s),\widetilde B(s)\bigr),
\phi
\right\rangle_{H^{-1},H^1}\,ds,
\label{HJ451}
\\
M_B^\psi(t)
:={}&
\bigl(\widetilde B(t),\psi\bigr)_H
-
\bigl(B_0,\psi\bigr)_H
-
\int_0^t
\left\langle
F_2\bigl(\widetilde u(s),\widetilde B(s)\bigr),
\psi
\right\rangle_{H^{-1},H^1}\,ds.
\label{B11R1}
\end{align}
We first consider the first two terms in \eqref{HJ451} and \eqref{B11R1}. By
\eqref{L1tildn} and \eqref{L2bsH-1}, 
\begin{align}
\bigl(\widetilde u^n(t),\phi\bigr)_H
&\longrightarrow
\bigl(\widetilde u(t),\phi\bigr)_H,
\qquad
\widetilde{\mathbb P}\text{-a.s.},
\label{haz3q}
\\
\bigl(\widetilde B^n(t),\psi\bigr)_H
&\longrightarrow
\bigl(\widetilde B(t),\psi\bigr)_H,
\qquad
\widetilde{\mathbb P}\text{-a.s.}
\label{Gcrz2}
\end{align}
Moreover,  \eqref{L^4-mo} gives
\begin{align}
\sup_{n\geq1}
\widetilde{\mathbb E}
\left\lvert
\bigl(\widetilde u^n(t),\phi\bigr)_H
\right\rvert^4
&\leq
C_T\lVert\phi\rVert_H^4,
\label{yhx87}
\\
\sup_{n\geq1}
\widetilde{\mathbb E}
\left\lvert
\bigl(\widetilde B^n(t),\psi\bigr)_H
\right\rvert^4
&\leq
C_T\lVert\psi\rVert_H^4,
\label{uw12v}
\end{align}
where
\eqref{yhx87}-\eqref{uw12v}
give uniform integrability, and combining this with
\eqref{haz3q}-\eqref{Gcrz2},
Vitali's convergence theorem yields
\begin{align}
\widetilde{\mathbb E}
\left\lvert
\bigl(\widetilde u^n(t),\phi\bigr)_H
-
\bigl(\widetilde u(t),\phi\bigr)_H
\right\rvert
&\longrightarrow0,
\label{A1R1U}
\\
\widetilde{\mathbb E}
\left\lvert
\bigl(\widetilde B^n(t),\psi\bigr)_H
-
\bigl(\widetilde B(t),\psi\bigr)_H
\right\rvert
&\longrightarrow0.
\label{s3ezq}
\end{align}
For the second two terms in \eqref{HJ451} and \eqref{B11R1}, since
$P_nu_0\to u_0$ and $P_nB_0\to B_0$ strongly in $H$, we have
\begin{align}
\left\lvert
\bigl(P_nu_0-u_0,\phi\bigr)_H
\right\rvert
&\leq
\lVert P_nu_0-u_0\rVert_H\lVert\phi\rVert_H
\longrightarrow0,
\label{df12zc}
\\
\left\lvert
\bigl(P_nB_0-B_0,\psi\bigr)_H
\right\rvert
&\leq
\lVert P_nB_0-B_0\rVert_H\lVert\psi\rVert_H
\longrightarrow0.
\label{ub45z.}
\end{align}
It remains to upgrade the almost sure convergence of the $F_1$- and
$F_2$-integrals already established in \eqref{FR1Con} and \eqref{F2CO}
to convergence in $L^1(\widetilde\Omega)$. To this end, we establish
uniform integrability. Since
$\phi,\psi\in C^\infty(\mathbb T^2;\mathbb R^2)$,
\begin{align}
\sup_{n\geq1}
\left(
\lVert\partial_1P_n\phi\rVert_H
+
\lVert\nabla P_n\phi\rVert_{L^\infty}
\right)
&\leq C_\phi,
\label{Jk100}
\\
\sup_{n\geq1}
\left(
\lVert\nabla P_n\psi\rVert_{L^2}
+
\lVert\nabla P_n\psi\rVert_{L^\infty}
\right)
&\leq C_\psi.
\label{Jk200}
\end{align}
For the third two terms in \eqref{HJ451} and \eqref{B11R1}, using \eqref{def-F1}-\eqref{def-F2}, integration by parts,
Hölder's inequality, and
\eqref{Jk100}-\eqref{Jk200},
we obtain
\begin{align}
&
\int_0^T
\left\lvert
\left\langle
F_1(\widetilde u^n,\widetilde B^n),
P_n\phi
\right\rangle_{H^{-1},H^1}
\right\rvert\,ds
\nonumber\\
&\leq
C_\phi
\left[
T^{1/2}
\left(
\int_0^T
\lVert\partial_1\widetilde u^n\rVert_H^2\,ds
\right)^{1/2}
+
T\sup_{0\leq s\leq T}
\lVert\widetilde u^n(s)\rVert_H^2
+
T\sup_{0\leq s\leq T}
\lVert\widetilde B^n(s)\rVert_H^2
\right],
\label{K100}
\\
\textrm{and},
\nonumber
\\
&
\int_0^T
\left\lvert
\left\langle
F_2(\widetilde u^n,\widetilde B^n),
P_n\psi
\right\rangle_{H^{-1},H^1}
\right\rvert\,ds
\nonumber\\
&\leq
C_\psi
\left[
T^{1/2}
\left(
\int_0^T
\lVert\nabla\widetilde B^n\rVert_{L^2}^2\,ds
\right)^{1/2}
+
2T
\sup_{0\leq s\leq T}
\lVert\widetilde u^n(s)\rVert_H
\sup_{0\leq s\leq T}
\lVert\widetilde B^n(s)\rVert_H
\right].
\label{JJK300}
\end{align}
Squaring \eqref{K100}-\eqref{JJK300},
taking expectations, and using the uniform energy estimate
\eqref{L2-sup} together with the fourth-moment estimate
\eqref{L^4-mo}, we obtain
\begin{align}
\sup_{n\geq1}
\widetilde{\mathbb E}
\left[
\left(
\int_0^T
\left\lvert
\left\langle
F_1(\widetilde u^n,\widetilde B^n),
P_n\phi
\right\rangle_{H^{-1},H^1}
\right\rvert\,ds
\right)^2
\right]
&<\infty,
\label{h6f48z}
\\
\sup_{n\geq1}
\widetilde{\mathbb E}
\left[
\left(
\int_0^T
\left\lvert
\left\langle
F_2(\widetilde u^n,\widetilde B^n),
P_n\psi
\right\rangle_{H^{-1},H^1}
\right\rvert\,ds
\right)^2
\right]
&<\infty.
\label{D2walz}
\end{align}
Hence
\eqref{h6f48z}-\eqref{D2walz}
imply uniform integrability of the two families of drift integrals.
Combining this with the almost sure convergences
\eqref{FR1Con} and \eqref{F2CO}, Vitali's convergence theorem yields
\begin{align}
&
\widetilde{\mathbb E}
\left\lvert
\int_0^t
\left\langle
F_1(\widetilde u^n,\widetilde B^n),P_n\phi
\right\rangle_{H^{-1},H^1}\,ds
-
\int_0^t
\left\langle
F_1(\widetilde u,\widetilde B),\phi
\right\rangle_{H^{-1},H^1}\,ds
\right\rvert
\longrightarrow0,
\label{yt3xz}
\\
&
\widetilde{\mathbb E}
\left\lvert
\int_0^t
\left\langle
F_2(\widetilde u^n,\widetilde B^n),P_n\psi
\right\rangle_{H^{-1},H^1}\,ds
-
\int_0^t
\left\langle
F_2(\widetilde u,\widetilde B),\psi
\right\rangle_{H^{-1},H^1}\,ds
\right\rvert
\longrightarrow0.
\label{1LD1}
\end{align}
Finally, combining
\eqref{A1R1U}, \eqref{df12zc}, and
\eqref{yt3xz} with
\eqref{JKLT23}-\eqref{HJ451}, and combining
\eqref{s3ezq}, \eqref{ub45z.}, and
\eqref{1LD1} with
\eqref{123HKY}-\eqref{B11R1}, we obtain, for every
$t\in[0,T]$,
\begin{align}
M_{u,n}^{\phi}(t)
&\longrightarrow
M_u^\phi(t)
&&\text{in }L^1(\widetilde\Omega),
\label{qaz}
\\
M_{B,n}^{\psi}(t)
&\longrightarrow
M_B^\psi(t)
&&\text{in }L^1(\widetilde\Omega).
\label{HJTLU}
\end{align}
\medskip
\noindent
\textbf{Part 2. $M_u^\phi$ and $M_B^\psi$ are continuous martingales.}
Let
$g:\mathcal X_u\times\mathcal X_B\to\mathbb R$
be bounded and continuous. By
\eqref{L1tildn}-\eqref{y6Fr}, it follows that
$
(\widetilde u^n,\widetilde B^n)
\longrightarrow
(\widetilde u,\widetilde B)
\;
\widetilde{\mathbb P}\text{-a.s.}
$
in $\mathcal X_u\times\mathcal X_B$; hence, by the continuity and
boundedness of $g$,
\begin{align}
g(\widetilde u^n,\widetilde B^n)
&\longrightarrow
g(\widetilde u,\widetilde B)
\qquad
\widetilde{\mathbb P}\text{-a.s.},
\label{M2C}
\\
\lvert
g(\widetilde u^n,\widetilde B^n)
-
g(\widetilde u,\widetilde B)
\rvert
&\leq
2\lVert g\rVert_\infty.
\label{M2Lo}
\end{align}
Applying \eqref{qaz} at the two times $r$ and $t$, we obtain
\begin{align}
&
\widetilde{\mathbb E}
\left\lvert
\left(
M_{u,n}^{\phi}(t)-M_{u,n}^{\phi}(r)
\right)
-
\left(
M_u^\phi(t)-M_u^\phi(r)
\right)
\right\rvert
\nonumber\\
&\leq
\widetilde{\mathbb E}
\left\lvert
M_{u,n}^{\phi}(t)-M_u^\phi(t)
\right\rvert
+
\widetilde{\mathbb E}
\left\lvert
M_{u,n}^{\phi}(r)-M_u^\phi(r)
\right\rvert
\longrightarrow0.
\label{KL2}
\end{align}
Moreover, since \eqref{qaz} gives convergence in
$L^1(\widetilde\Omega)$, we have
\begin{align}
M_u^\phi(r),\,M_u^\phi(t)
\in L^1(\widetilde\Omega).
\label{6df2}
\end{align}
Using \eqref{KL2}, we write
\begin{align}
&
\left(
M_{u,n}^{\phi}(t)-M_{u,n}^{\phi}(r)
\right)
g(\widetilde u^n,\widetilde B^n)
-
\left(
M_u^\phi(t)-M_u^\phi(r)
\right)
g(\widetilde u,\widetilde B)
\nonumber\\
&=
\left[
\left(
M_{u,n}^{\phi}(t)-M_{u,n}^{\phi}(r)
\right)
-
\left(
M_u^\phi(t)-M_u^\phi(r)
\right)
\right]
g(\widetilde u^n,\widetilde B^n)
\nonumber\\
&\quad+
\left(
M_u^\phi(t)-M_u^\phi(r)
\right)
\left[
g(\widetilde u^n,\widetilde B^n)
-
g(\widetilde u,\widetilde B)
\right].
\label{hj298}
\end{align}
By \eqref{KL2} and the boundedness of $g$, the first term on the
right-hand side of \eqref{hj298} converges to zero in
$L^1(\widetilde\Omega)$; by \eqref{M2C}, \eqref{M2Lo}, and
\eqref{6df2}, the second term converges to zero in
$L^1(\widetilde\Omega)$ by the dominated convergence theorem. Therefore, \eqref{hj298} implies that
\begin{align}
\widetilde{\mathbb E}
\left[
\left(
M_{u,n}^{\phi}(t)-M_{u,n}^{\phi}(r)
\right)
g(\widetilde u^n,\widetilde B^n)
\right]
\longrightarrow
\widetilde{\mathbb E}
\left[
\left(
M_u^\phi(t)-M_u^\phi(r)
\right)
g(\widetilde u,\widetilde B)
\right].
\label{K12f4}
\end{align}
Now, from \eqref{HGS} we have
\begin{align}
M_{u,n}^{\phi}(t)-M_{u,n}^{\phi}(r)
=
\int_r^t
\left(
P_n\sigma_1(s,u^n(s),B^n(s))
\Pi_n^{(1)}\,dW_1(s),
P_n\phi
\right)_H.
\label{T1L1R0}
\end{align}
The integrand in \eqref{T1L1R0} is predictable,
and  together
with \eqref{nL2}, give
\begin{align}
&
\left\lVert
\left(
P_n\sigma_1(s,u^n,B^n)\Pi_n^{(1)},P_n\phi
\right)_H
\right\rVert_{\mathfrak U_1}^{2}
\nonumber\\
&\leq
\lVert\phi\rVert_H^2
\lVert\sigma_1(s,u^n,B^n)\rVert_{L_2(\mathfrak U_1;H)}^2
\nonumber\\
&\leq
\lVert\phi\rVert_H^2
\left[
K_0
+
K_1
\left(
\lVert u^n\rVert_H^2+\lVert B^n\rVert_H^2
\right)
+
K_2
\left(
\nu\lVert\partial_1u^n\rVert_H^2
+
\eta\lVert\nabla B^n\rVert_H^2
\right)
\right].
\label{AS2b6}
\end{align}
Integrating \eqref{AS2b6} over $[0,T]$, taking expectation, and applying
\eqref{L2engy} yield
\begin{align}
\mathbb E\int_0^T
\left\lVert
\left(P_n\sigma_1(s,u^n(s),B^n(s))\Pi_n^{(1)},P_n\phi\right)_H
\right\rVert_{\mathfrak U_1}^{2}\,ds
<\infty,
\label{ghf11a}
\end{align}
so that \eqref{T1L1R0} and
\eqref{ghf11a} give
\begin{align}
\mathbb E\!\left[
M_{u,n}^{\phi}(t)-M_{u,n}^{\phi}(r)
\,\middle\rvert\,\mathcal F_r
\right]
=0,
\qquad 0\le r<t\le T.
\label{s1z2gk}
\end{align} Using
\eqref{lka0}
and
 and the tower property,it follows from \eqref{s1z2gk} that
\begin{align}
\mathbb E
\left[
M_{u,n}^{\phi}(t)-M_{u,n}^{\phi}(r)
\,\middle\rvert\,
\mathcal F_r^n
\right]
&=
\mathbb E
\left[
\mathbb E
\left[
M_{u,n}^{\phi}(t)-M_{u,n}^{\phi}(r)
\,\middle\rvert\,
\mathcal F_r
\right]
\,\middle\rvert\,
\mathcal F_r^n
\right]
=0.
\label{Cx3zk4}
\end{align}
Since $g(u^n,B^n)$ is $\mathcal F_r^n$-measurable, hence
\eqref{Cx3zk4} gives
\begin{align}
\mathbb E
\left[
\left(
M_{u,n}^{\phi}(t)-M_{u,n}^{\phi}(r)
\right)
g(u^n,B^n)
\right]
=
\mathbb E
\left[
g(u^n,B^n)
\mathbb E
\left[
M_{u,n}^{\phi}(t)-M_{u,n}^{\phi}(r)
\,\middle\rvert\,
\mathcal F_r^n
\right]
\right]
=0.
\label{L1zf4c"}
\end{align}
The equality of law, 
$
\mathcal L_{\widetilde{\mathbb P}}
(\widetilde u^n,\widetilde B^n)
=
\mathcal L_{\mathbb P}(u^n,B^n),
$ transfers
\eqref{L1zf4c"} to
\begin{align}
\widetilde{\mathbb E}
\left[
\left(
M_{u,n}^{\phi}(t)-M_{u,n}^{\phi}(r)
\right)
g(\widetilde u^n,\widetilde B^n)
\right]
=0.
\label{ag12zs}
\end{align}
Combining \eqref{ag12zs} with the
convergence \eqref{K12f4} gives
\begin{align}
\widetilde{\mathbb E}
\left[
\left(
M_u^\phi(t)-M_u^\phi(r)
\right)
g(\widetilde u,\widetilde B)
\right]
=0.
\label{MCV}
\end{align}
Following the filtration defined in \eqref{MF1}, on the Skorokhod probability space, we set
\begin{align}
\widetilde{\mathcal F}_r
:=
\sigma\left(
\widetilde u(s),\widetilde B(s):0\leq s\leq r
\right),
\qquad
0\leq r\leq T.
\label{65xd}
\end{align}
By the definition of $M_u^\phi$ from \eqref{HJ451} and the definition of the
filtration in \eqref{65xd},
\begin{align}
M_u^\phi(r)
\quad\text{is }
\widetilde{\mathcal F}_r\text{-measurable}.
\label{KLCx}
\end{align}
The monotone-class argument used in
\cite[Theorem~4.5, Step~3, pp.~1737-1738]{goldys2009martingale}
extends \eqref{MCV} to
\begin{align}
\widetilde{\mathbb E}
\left[
\left(
M_u^\phi(t)-M_u^\phi(r)
\right)G
\right]
=0
\qquad
\text{for every }
G\in L^\infty(\widetilde{\mathcal F}_r).
\label{KHJ54c}
\end{align}
The integrability in \eqref{6df2}, the
$\widetilde{\mathcal F}_r$-measurability in \eqref{KLCx}, and the
identity \eqref{KHJ54c} give
\begin{align}
\widetilde{\mathbb E}
\left[
M_u^\phi(t)
\,\middle\rvert\,
\widetilde{\mathcal F}_r
\right]
=
M_u^\phi(r),
\qquad
0\leq r<t\leq T.
\label{115gha}
\end{align}
Consequently, \eqref{6df2}, \eqref{KLCx}, and
\eqref{115gha} yield
\begin{align}
M_u^\phi
\quad\text{is an }
\{\widetilde{\mathcal F}_t\}_{t\in[0,T]}
\text{-martingale}.
\label{GHTL2}
\end{align}
Applying the same mechanism from \eqref{M2C}-\eqref{GHTL2},and  by the definition of $M_B^\psi$ from \eqref{B11R1}, we obtain
\eqref{65xd},
\begin{align}
M_B^\psi(r)
\quad\text{is }
\widetilde{\mathcal F}_r\text{-measurable}.
\label{0oL3}
\end{align}
Applying the same monotone-class argument used in
\eqref{MCV}-\eqref{KHJ54c}, 
by \eqref{qaz}-\eqref{HJTLU}, $M_B^\psi(r),M_B^\psi(t)\in
L^1(\widetilde\Omega)$; together with the
$\widetilde{\mathcal F}_r$-measurability in \eqref{0oL3},  we obtain
\begin{align}
M_B^\psi
\quad\text{is an }
\{\widetilde{\mathcal F}_t\}_{t\in[0,T]}
\text{-martingale}.
\label{lr2q}
\end{align}
Since $T>0$ was arbitrary, \eqref{CM2L1} and \eqref{L1M1A1} show that
$M_u^\phi$ and $M_B^\psi$ have continuous paths on $\mathbb R_+$.

\medskip
\noindent
Finally, we prove the continuity of $M_u^\phi$ and $M_B^\psi$. By the
definition of $M_u^\phi$ from \eqref{HJ451}, together with \eqref{L1C2} and
\eqref{F1IC},
\begin{align}
\lvert M_u^\phi(t)-M_u^\phi(r)\rvert
&\leq
\lVert\widetilde u(t)-\widetilde u(r)\rVert_{H^{-1}}
\lVert\phi\rVert_{H^1}
\nonumber\\
&\quad+
\lVert\phi\rVert_{H^1}
\int_r^t
\lVert
F_1(\widetilde u(s),\widetilde B(s))
\rVert_{H^{-1}}\,ds
\longrightarrow0
\qquad\text{as }t\to r.
\label{c1xz23}
\end{align}
Thus \eqref{c1xz23} gives
\begin{align}
M_u^\phi
\in C([0,T];\mathbb R),
\qquad
\widetilde{\mathbb P}\text{-a.s.}
\label{CM2L1}
\end{align}
Likewise, the definition of $M_B^\psi$ from \eqref{B11R1}, together with
\eqref{K1M3} and \eqref{F2IC}, gives
\begin{align}
\lvert M_B^\psi(t)-M_B^\psi(r)\rvert
&\leq
\lVert\widetilde B(t)-\widetilde B(r)\rVert_{H^{-1}}
\lVert\psi\rVert_{H^1}
\nonumber\\
&\quad+
\lVert\psi\rVert_{H^1}
\int_r^t
\lVert
F_2(\widetilde u(s),\widetilde B(s))
\rVert_{H^{-1}}\,ds
\longrightarrow0
\qquad\text{as }t\to r.
\label{KJU1}
\end{align}
It follows from \eqref{KJU1} that
\begin{align}
M_B^\psi
\in C([0,T];\mathbb R),
\qquad
\widetilde{\mathbb P}\text{-a.s.}
\label{L1M1A1}
\end{align}
Since $T>0$ was arbitrary,
\eqref{CM2L1} and \eqref{L1M1A1} show that
$M_u^\phi$ and $M_B^\psi$ have continuous paths on $\mathbb R_+$.

\medskip
\noindent
\textbf{Part 3. $M_u^\phi(t),M_B^\psi(t)\in L^2(\widetilde\Omega)$.}
Applying \eqref{JKLT23}
together with \eqref{HGS} gives
\begin{align}
M_{u,n}^{\phi}(t)
&=
\int_0^t
\left(
P_n\sigma_1
\bigl(s,u^n(s),B^n(s)\bigr)
\Pi_n^{(1)}\,dW_1(s),
P_n\phi
\right)_H .
\label{klc}
\end{align}
Since $u^n$ and $B^n$ are continuous adapted processes, the integrand
in \eqref{klc} is predictable, while
\eqref{ghf11a} gives its square integrability. Since $P_n$ and
$\Pi_n^{(1)}$ are orthogonal projections, the adjoint relation
rewrites \eqref{klc} as
\begin{align}
M_{u,n}^{\phi}(t)
&=
\int_0^t
\left(
\Pi_n^{(1)}
\sigma_1^\ast
\bigl(s,u^n(s),B^n(s)\bigr)
P_n\phi,
dW_1(s)
\right)_{\mathfrak U_1}.
\label{lkc}
\end{align}
The quadratic variation associated with \eqref{lkc} is given
\begin{align}
\left\langle\!\left\langle M_{u,n}^{\phi}
\right\rangle\!\right\rangle_t
&=
\int_0^t
\left\lVert
\Pi_n^{(1)}
\sigma_1^\ast
\bigl(s,u^n(s),B^n(s)\bigr)
P_n\phi
\right\rVert_{\mathfrak U_1}^2
\,ds.
\label{kas}
\end{align}
Applying the Burkholder-Davis-Gundy inequality to
\eqref{lkc}-\eqref{kas} gives
\begin{align}
\mathbb E
\lvert M_{u,n}^{\phi}(t)\rvert^4
&\leq
C\,
\mathbb E
\left(
\int_0^t
\left\lVert
\Pi_n^{(1)}
\sigma_1^\ast
\bigl(s,u^n(s),B^n(s)\bigr)
P_n\phi
\right\rVert_{\mathfrak U_1}^2\,ds
\right)^2.
\label{21Ma2}
\end{align}
Using
$\lVert P_n\phi\rVert_{H^1}\leq\lVert\phi\rVert_{H^1}$, and $\lVert G^\ast v\rVert_{\mathfrak U_1}^2 \leq \lVert G\rVert_{L_2(\mathfrak U_1;H^{-1})}^2 \lVert v\rVert_{H^1}^2$ give
\begin{align}
&
\left\lVert
\Pi_n^{(1)}
\sigma_1^\ast
\bigl(s,u^n(s),B^n(s)\bigr)
P_n\phi
\right\rVert_{\mathfrak U_1}^2
\leq
\left\lVert
\sigma_1
\bigl(s,u^n(s),B^n(s)\bigr)
\right\rVert_{L_2(\mathfrak U_1;H^{-1})}^2
\lVert\phi\rVert_{H^1}^2.
\label{h840}
\end{align}
From  \eqref{NH1}, we obtain
\begin{align}
&
\left\lVert
\sigma_1
\bigl(s,u^n(s),B^n(s)\bigr)
\right\rVert_{L_2(\mathfrak U_1;H^{-1})}^2
\leq
K_0'
+
K_1'
\left(
\lVert u^n(s)\rVert_H^2
+
\lVert B^n(s)\rVert_H^2
\right).
\label{940h}
\end{align}
Substituting
\eqref{h840}-\eqref{940h}
into \eqref{21Ma2} yields
\begin{align}
\mathbb E
\lvert M_{u,n}^{\phi}(t)\rvert^4
&\leq
C\lVert\phi\rVert_{H^1}^4
\mathbb E
\left(
\int_0^t
\left[
K_0'
+
K_1'
\left(
\lVert u^n(s)\rVert_H^2
+
\lVert B^n(s)\rVert_H^2
\right)
\right]ds
\right)^2
\nonumber\\
&\leq
C_{T,\phi}
\mathbb E
\left[
1+
\sup_{0\leq s\leq T}\lVert u^n(s)\rVert_H^4
+
\sup_{0\leq s\leq T}\lVert B^n(s)\rVert_H^4
\right].
\label{N1O2za}
\end{align}
Combining \eqref{N1O2za} with the fourth-moment
estimate \eqref{L^4-mo} gives
\begin{align}
\sup_{n\geq1}
\mathbb E
\lvert M_{u,n}^{\phi}(t)\rvert^4
<\infty.
\label{h640}
\end{align}
Since
$
\mathcal L_{\widetilde{\mathbb P}}
(\widetilde u^n,\widetilde B^n)
=
\mathcal L_{\mathbb P}(u^n,B^n),
$
and $M_{u,n}^{\phi}(t)$ is defined in \eqref{JKLT23}, we have
\begin{align}
\widetilde{\mathbb E}
\lvert M_{u,n}^{\phi}(t)\rvert^4
=
\mathbb E
\lvert M_{u,n}^{\phi}(t)\rvert^4.
\label{Rat15x}
\end{align}
Thus \eqref{h640} and
\eqref{Rat15x} imply
\begin{align}
\sup_{n\geq1}
\widetilde{\mathbb E}
\lvert M_{u,n}^{\phi}(t)\rvert^4
<\infty.
\label{aqw}
\end{align}
The $L^1(\widetilde\Omega)$-convergence \eqref{qaz} implies
\begin{align}
M_{u,n}^{\phi}(t)
\longrightarrow
M_u^\phi(t)
\qquad
\text{in }\widetilde{\mathbb P}\text{-probability}.
\label{kv2}
\end{align}
Choose a subsequence $\{n_k\}$ along which the convergence in
\eqref{kv2} holds $\widetilde{\mathbb P}$-a.s.  Fatou's lemma and
\eqref{aqw} then give
\begin{align}
\widetilde{\mathbb E}
\lvert M_u^\phi(t)\rvert^4
&\leq
\liminf_{k\to\infty}
\widetilde{\mathbb E}
\lvert M_{u,n_k}^{\phi}(t)\rvert^4
\nonumber\\
&\leq
\sup_{n\geq1}
\widetilde{\mathbb E}
\lvert M_{u,n}^{\phi}(t)\rvert^4
<\infty.
\label{F1O1R1}
\end{align}
The elementary inequality
$\lvert a-b\rvert^4\leq8(\lvert a\rvert^4+\lvert b\rvert^4)$,
together with \eqref{aqw} and \eqref{F1O1R1}, therefore
gives
\begin{align}
\sup_{n\geq1}
\widetilde{\mathbb E}
\lvert
M_{u,n}^{\phi}(t)-M_u^\phi(t)
\rvert^4
<\infty.
\label{U1f1x93}
\end{align}
For $R>0$, \eqref{U1f1x93} gives
\begin{align}
&
\sup_{n\geq1}
\widetilde{\mathbb E}
\left[
\lvert M_{u,n}^{\phi}(t)-M_u^\phi(t)\rvert^2
\mathbf 1_{
\{
\lvert M_{u,n}^{\phi}(t)-M_u^\phi(t)\rvert^2>R
\}}
\right]
\nonumber\\
&\qquad\leq
\frac{1}{R}
\sup_{n\geq1}
\widetilde{\mathbb E}
\lvert M_{u,n}^{\phi}(t)-M_u^\phi(t)\rvert^4
\longrightarrow0
\qquad\text{as }R\to\infty.
\label{hg30}
\end{align}
Thus \eqref{hg30} gives uniform integrability of
$\{\lvert M_{u,n}^{\phi}(t)-M_u^\phi(t)\rvert^2\}_{n\geq1}$,
while \eqref{kv2} gives
$\lvert M_{u,n}^{\phi}(t)-M_u^\phi(t)\rvert^2\to0$ in
$\widetilde{\mathbb P}$-probability. Vitali's theorem therefore
yields
\begin{align}
\widetilde{\mathbb E}
\lvert
M_{u,n}^{\phi}(t)-M_u^\phi(t)
\rvert^2
\longrightarrow0.
\label{6gvx}
\end{align}
For $M_{B,n}^{\psi}$, using
$\lVert P_n\psi\rVert_{H^1}\leq\lVert\psi\rVert_{H^1}$, and $\lVert G^\ast v\rVert_{\mathfrak U_1}^2 \leq \lVert G\rVert_{L_2(\mathfrak U_1;H^{-1})}^2 \lVert v\rVert_{H^1}^2$ and applying the same argument from \eqref{lkc}  to \eqref{6gvx} , we obtain

\begin{align}
\widetilde{\mathbb E}
\lvert
M_{B,n}^{\psi}(t)-M_B^\psi(t)
\rvert^2
\longrightarrow0.
\label{B11R12l}
\end{align}
Combining \eqref{6gvx} and \eqref{B11R12l}, for every
fixed $t\in[0,T]$, we obtain
\begin{align}
M_{u,n}^{\phi}(t)
&\longrightarrow
M_u^\phi(t)
\qquad
\text{in }L^2(\widetilde\Omega),
\label{4saq}
\\
M_{B,n}^{\psi}(t)
&\longrightarrow
M_B^\psi(t)
\qquad
\text{in }L^2(\widetilde\Omega).
\label{GHr12}
\end{align}
Since $M_{u,n}^{\phi}(t)$ is square-integrable by
\eqref{ghf11a}, and similarly $M_{B,n}^{\psi}(t)$, it follows 
 \eqref{4saq} and
\eqref{GHr12} that
\begin{align}
M_u^\phi(t)
&\in
L^2(\widetilde\Omega),
\qquad
t\in[0,T],
\label{ISLAMR1}
\\
M_B^\psi(t)
&\in
L^2(\widetilde\Omega),
\qquad
t\in[0,T].
\label{HJKT12}
\end{align}
Since $T>0$ was arbitrary,
\eqref{ISLAMR1} and
\eqref{HJKT12} hold for every $t\geq0$.
\medskip

\noindent
\textbf{Part 4(i). Proof of $\left\langle\!\left\langle M_u^\phi\right\rangle\!\right\rangle_t=\int_0^t\left\lVert\sigma_1^\ast(s,\widetilde u(s),\widetilde B(s))\phi\right\rVert_{\mathfrak U_1}^2\,ds$.}

\medskip
\noindent
 By \eqref{lkc} and \eqref{kas},
It\^o's formula applied to $x\mapsto x^2$ gives
\begin{align}
&
\left(M_{u,n}^{\phi}(t)\right)^2
-
\left(M_{u,n}^{\phi}(r)\right)^2
-
\int_r^t
\left\lVert
\Pi_n^{(1)}
\sigma_1^\ast
\bigl(s,u^n(s),B^n(s)\bigr)P_n\phi
\right\rVert_{\mathfrak U_1}^2\,ds
\nonumber\\
&\qquad=
2\int_r^t
M_{u,n}^{\phi}(s)
\left(
\Pi_n^{(1)}
\sigma_1^\ast
\bigl(s,u^n(s),B^n(s)\bigr)P_n\phi,
dW_1(s)
\right)_{\mathfrak U_1}.
\label{8bc}
\end{align}
The integrand on the right-hand side of \eqref{8bc} is predictable.
Moreover, by \eqref{h640},
\eqref{h840},
\eqref{940h}, and \eqref{L^4-mo},
\begin{align}
&
\mathbb E\int_0^T
\lvert M_{u,n}^{\phi}(s)\rvert^2
\left\lVert
\Pi_n^{(1)}
\sigma_1^\ast
\bigl(s,u^n(s),B^n(s)\bigr)P_n\phi
\right\rVert_{\mathfrak U_1}^2\,ds
<\infty,
\end{align}
which shows that the stochastic integral in \eqref{8bc} is a square-integrable
martingale. Since $g(u^n,B^n)$ is $\mathcal F_r^n$-measurable,
taking conditional expectation in \eqref{8bc} and using the tower
property yields
\begin{align}
&
\mathbb E
\left[
\left(
\left(M_{u,n}^{\phi}(t)\right)^2
-
\left(M_{u,n}^{\phi}(r)\right)^2
-
\int_r^t
\left\lVert
\Pi_n^{(1)}
\sigma_1^\ast
\bigl(s,u^n(s),B^n(s)\bigr)P_n\phi
\right\rVert_{\mathfrak U_1}^2\,ds
\right)
g(u^n,B^n)
\right]
=0.
\label{hjy5Lx}
\end{align}
By $\mathcal L_{\widetilde{\mathbb P}}
(\widetilde u^n,\widetilde B^n)
=
\mathcal L_{\mathbb P}
(u^n,B^n)$, and from  \eqref{hjy5Lx}, we obtain
\begin{align}
&
\widetilde{\mathbb E}
\left[
\left(
\left(M_{u,n}^{\phi}(t)\right)^2
-
\left(M_{u,n}^{\phi}(r)\right)^2
-
\int_r^t
\left\lVert
\Pi_n^{(1)}
\sigma_1^\ast
\bigl(s,\widetilde u^n(s),\widetilde B^n(s)\bigr)P_n\phi
\right\rVert_{\mathfrak U_1}^2\,ds
\right)
g(\widetilde u^n,\widetilde B^n)
\right]
=0.
\label{9HE}
\end{align}
We next pass to the limit in \eqref{9HE}. For every $q\in[0,T]$,
\eqref{4saq}, \eqref{aqw}, and \eqref{F1O1R1} give
\begin{align}
&
\widetilde{\mathbb E}
\left\lvert
\left(M_{u,n}^{\phi}(q)\right)^2
-
\left(M_u^\phi(q)\right)^2
\right\rvert
\leq
\left(
\widetilde{\mathbb E}
\lvert M_{u,n}^{\phi}(q)-M_u^\phi(q)\rvert^2
\right)^{1/2}
\left(
\widetilde{\mathbb E}
\lvert M_{u,n}^{\phi}(q)+M_u^\phi(q)\rvert^2
\right)^{1/2}
\longrightarrow0.
\label{34sa}
\end{align}
To pass to the limit in the remaining term in \eqref{9HE}, we write
\begin{align}
\Pi_n^{(1)}
\sigma_1^\ast
(s,\widetilde u^n,\widetilde B^n)P_n\phi
-
\sigma_1^\ast
(s,\widetilde u,\widetilde B)\phi
&=
\Pi_n^{(1)}
\left[
\sigma_1^\ast(s,\widetilde u^n,\widetilde B^n)
-
\sigma_1^\ast(s,\widetilde u,\widetilde B)
\right]P_n\phi
\nonumber\\
&\quad+
\Pi_n^{(1)}
\sigma_1^\ast(s,\widetilde u,\widetilde B)
(P_n\phi-\phi)
+
(\Pi_n^{(1)}-I)
\sigma_1^\ast(s,\widetilde u,\widetilde B)\phi.
\label{9xc}
\end{align}
Since
\begin{align}
\lVert\sigma\rVert_{L_2(\mathfrak U_1;H^{-1})}
=
\lVert\sigma^\ast\rVert_{L_2(H^1;\mathfrak U_1)},
\label{2qa}
\end{align}
 \eqref{NlH},
\eqref{TUL2H} and \eqref{TBL2H2}, and the boundedness of
$P_n\phi$ in $H^1$ give
\begin{align}
&\int_0^T
\left\lVert
\left[
\sigma_1^\ast(s,\widetilde u^n,\widetilde B^n)
-
\sigma_1^\ast(s,\widetilde u,\widetilde B)
\right]P_n\phi
\right\rVert_{\mathfrak U_1}^2\,ds
\nonumber\\
&\qquad\leq
C\lVert P_n\phi\rVert_{H^1}^2
\int_0^T
\left(
\lVert\widetilde u^n-\widetilde u\rVert_H^2
+
\lVert\widetilde B^n-\widetilde B\rVert_H^2
\right)\,ds
\longrightarrow0.
\end{align}
Moreover, $P_n\phi\to\phi$ in $H^1$ and \eqref{NH1} imply that the
second term in \eqref{9xc} tends to zero, while
$\Pi_n^{(1)}\to I$, \eqref{NH1}, and dominated convergence imply
the same for the third term. Consequently,
\begin{align}
&
\int_0^T
\left\lVert
\Pi_n^{(1)}
\sigma_1^\ast
(s,\widetilde u^n,\widetilde B^n)P_n\phi
-
\sigma_1^\ast
(s,\widetilde u,\widetilde B)\phi
\right\rVert_{\mathfrak U_1}^2\,ds
\longrightarrow0,
\qquad
\widetilde{\mathbb P}\text{-a.s.}
\label{erX}
\end{align}
Using $\lvert a^2-b^2\rvert=\lvert a-b\rvert(a+b)$ and
Cauchy-Schwarz in time, \eqref{erX} gives, for every $q\in[0,T]$,
\begin{align}
\int_0^q
\left\lVert
\Pi_n^{(1)}
\sigma_1^\ast
(s,\widetilde u^n,\widetilde B^n)P_n\phi
\right\rVert_{\mathfrak U_1}^2\,ds
\longrightarrow
\int_0^q
\left\lVert
\sigma_1^\ast
(s,\widetilde u,\widetilde B)\phi
\right\rVert_{\mathfrak U_1}^2\,ds,
\qquad
\widetilde{\mathbb P}\text{-a.s.}
\label{dx23}
\end{align}
Moreover, \eqref{NH1} and \eqref{L^4-mo} imply
\begin{align}
\sup_{n\geq1}
\widetilde{\mathbb E}
\left(
\int_0^q
\left\lVert
\Pi_n^{(1)}
\sigma_1^\ast
(s,\widetilde u^n,\widetilde B^n)P_n\phi
\right\rVert_{\mathfrak U_1}^2\,ds
\right)^2
<\infty.
\label{12az}
\end{align}
Thus \eqref{dx23}, \eqref{12az}, and Vitali's theorem yield
\begin{align}
&
\widetilde{\mathbb E}
\left\lvert
\int_0^q
\left\lVert
\Pi_n^{(1)}
\sigma_1^\ast
(s,\widetilde u^n,\widetilde B^n)P_n\phi
\right\rVert_{\mathfrak U_1}^2\,ds
-
\int_0^q
\left\lVert
\sigma_1^\ast
(s,\widetilde u,\widetilde B)\phi
\right\rVert_{\mathfrak U_1}^2\,ds
\right\rvert
\longrightarrow0.
\label{7rq}
\end{align}
Combining \eqref{34sa} at $q=t,r$ with \eqref{7rq} at $q=t,r$
gives
\begin{align}
&
\widetilde{\mathbb E}
\left\lvert
\left[
(M_{u,n}^{\phi}(t))^2-(M_{u,n}^{\phi}(r))^2
-
\int_r^t
\left\lVert
\Pi_n^{(1)}
\sigma_1^\ast
(s,\widetilde u^n,\widetilde B^n)P_n\phi
\right\rVert_{\mathfrak U_1}^2\,ds
\right]
\right.
\nonumber\\
&\hspace{20mm}\left.
-
\left[
(M_u^\phi(t))^2-(M_u^\phi(r))^2
-
\int_r^t
\left\lVert
\sigma_1^\ast
(s,\widetilde u,\widetilde B)\phi
\right\rVert_{\mathfrak U_1}^2\,ds
\right]
\right\rvert
\longrightarrow0.
\label{Hjk5s1}
\end{align}
Together with \eqref{M2C}, the boundedness of $g$, and
\eqref{Hjk5s1}, this allows passage to the
limit in \eqref{9HE}, giving
\begin{align}
&
\widetilde{\mathbb E}
\left[
\left(
(M_u^\phi(t))^2-(M_u^\phi(r))^2
-
\int_r^t
\left\lVert
\sigma_1^\ast
(s,\widetilde u,\widetilde B)\phi
\right\rVert_{\mathfrak U_1}^2\,ds
\right)
g(\widetilde u,\widetilde B)
\right]
=0.
\label{KL1ev3}
\end{align}
By the same monotone-class argument used in
\eqref{MCV}-\eqref{KHJ54c}, it follows from \eqref{KL1ev3} and \eqref{65xd} that
\begin{align*}
&
\widetilde{\mathbb E}
\left[
\left.
(M_u^\phi(t))^2-(M_u^\phi(r))^2
-
\int_r^t
\left\lVert
\sigma_1^\ast
(s,\widetilde u,\widetilde B)\phi
\right\rVert_{\mathfrak U_1}^2\,ds
\,\right\rVert,
\widetilde{\mathcal F}_r
\right]
=0,
\end{align*}
consequently,
\begin{align}
\left(M_u^\phi(t)\right)^2
-
\int_0^t
\left\lVert
\sigma_1^\ast
(s,\widetilde u(s),\widetilde B(s))\phi
\right\rVert_{\mathfrak U_1}^2\,ds
\quad\text{is an }
\{\widetilde{\mathcal F}_t\}_{t\in[0,T]}
\text{-martingale}.
\label{C1M12C1}
\end{align}
Since \eqref{CM2L1},
\eqref{ISLAMR1}, and
\eqref{GHTL2} show that $M_u^\phi$ is a continuous
square-integrable martingale, it follows from 
\eqref{C1M12C1} that
\begin{align}
\left\langle\!\left\langle M_u^\phi
\right\rangle\!\right\rangle_t
=
\int_0^t
\left\lVert
\sigma_1^\ast
(s,\widetilde u(s),\widetilde B(s))\phi
\right\rVert_{\mathfrak U_1}^2\,ds.
\label{A1U13}
\end{align}
\textbf{Part 4(ii). Proof of $\left\langle\!\left\langle M_B^\psi\right\rangle\!\right\rangle_t=\int_0^t\left\lVert\sigma_2^\ast(s,\widetilde u(s),\widetilde B(s))\psi\right\rVert_{\mathfrak U_2}^2\,ds$.}

\noindent For $M_B^{\psi}$, applying the same argument from \eqref{8bc} to \eqref{C1M12C1}, we obtain
\begin{align}
\left(M_B^\psi(t)\right)^2
-
\int_0^t
\left\lVert
\sigma_2^\ast
(s,\widetilde u(s),\widetilde B(s))\psi
\right\rVert_{\mathfrak U_2}^2\,ds
\quad\text{is an }
\{\widetilde{\mathcal F}_t\}_{t\in[0,T]}
\text{-martingale}.
\label{S1B1M2}
\end{align}
Since \eqref{L1M1A1},
\eqref{HJKT12}, and \eqref{lr2q} show that
$M_B^\psi$ is a continuous square-integrable martingale, it follows from \eqref{S1B1M2} that
\begin{align}
\left\langle\!\left\langle M_B^\psi
\right\rangle\!\right\rangle_t
=
\int_0^t
\left\lVert
\sigma_2^\ast
(s,\widetilde u(s),\widetilde B(s))\psi
\right\rVert_{\mathfrak U_2}^2\,ds.
\label{L1sec2}
\end{align}
\textbf{Part 4(iii) Proof of $\left\langle\!\left\langle M_u^\phi,M_B^\psi\right\rangle\!\right\rangle_t=0$.}

\noindent Fix divergence-free
$\phi,\psi\in C^\infty(\mathbb T^2;\mathbb R^2)$.
By \eqref{lkc}, $M_{u,n}^{\phi}$ is a stochastic integral with
respect to $W_1$, and similarly $M_{B,n}^{\psi}$ is a stochastic
integral with respect to $W_2$. Hence
\begin{align}
\left\langle\!\left\langle
M_{u,n}^{\phi},
M_{B,n}^{\psi}
\right\rangle\!\right\rangle_t
=
0,
\qquad
0\leq t\leq T.
\label{G1s1n1}
\end{align}
Since $M_{u,n}^{\phi}(0)=M_{B,n}^{\psi}(0)=0$, It\^o's product
formula, together with \eqref{G1s1n1}, gives
\begin{align}
M_{u,n}^{\phi}(t)M_{B,n}^{\psi}(t)
&=
\int_0^t
M_{B,n}^{\psi}(s)
\left(
\Pi_n^{(1)}
\sigma_1^\ast(s,u^n,B^n)P_n\phi,
dW_1(s)
\right)_{\mathfrak U_1}
\nonumber\\
&\quad+
\int_0^t
M_{u,n}^{\phi}(s)
\left(
\Pi_n^{(2)}
\sigma_2^\ast(s,u^n,B^n)P_n\psi,
dW_2(s)
\right)_{\mathfrak U_2}.
\label{h540}
\end{align}
The fourth-moment bound \eqref{h640}, together with
\eqref{h840}, \eqref{940h}, and \eqref{L^4-mo}, shows that the first
stochastic integral on the right-hand side of \eqref{h540} is a
square-integrable martingale, and similarly for the second one.
. Thus, as in
\eqref{Cx3zk4}-\eqref{L1zf4c"},
for $0\leq r<t\leq T$, it follows from
\eqref{h540} that
\begin{align}
\mathbb E
\left[
\left(
M_{u,n}^{\phi}(t)M_{B,n}^{\psi}(t)
-
M_{u,n}^{\phi}(r)M_{B,n}^{\psi}(r)
\right)
g(u^n,B^n)
\right]
=
0.
\label{5tc3z}
\end{align}
By $\mathcal L_{\widetilde{\mathbb P}} (\widetilde u^n,\widetilde B^n) = \mathcal L_{\mathbb P} (u^n,B^n)$, we obtain from \eqref{5tc3z} as follows
\begin{align}
\widetilde{\mathbb E}
\left[
\left(
M_{u,n}^{\phi}(t)M_{B,n}^{\psi}(t)
-
M_{u,n}^{\phi}(r)M_{B,n}^{\psi}(r)
\right)
g(\widetilde u^n,\widetilde B^n)
\right]
=
0.
\label{T1P1M2}
\end{align}
For every fixed $q\in[0,T]$, the convergences
\eqref{4saq} and \eqref{GHr12}, together with
\eqref{ISLAMR1}-\eqref{HJKT12}, yield
\begin{align}
&
\widetilde{\mathbb E}
\left\lvert
M_{u,n}^{\phi}(q)M_{B,n}^{\psi}(q)
-
M_u^\phi(q)M_B^\psi(q)
\right\rvert
\nonumber\\
&\leq
\left(
\widetilde{\mathbb E}
\left\lvert
M_{u,n}^{\phi}(q)-M_u^\phi(q)
\right\rvert^2
\right)^{1/2}
\left(
\widetilde{\mathbb E}
\left\lvert
M_{B,n}^{\psi}(q)
\right\rvert^2
\right)^{1/2}
\nonumber\\
&\quad+
\left(
\widetilde{\mathbb E}
\left\lvert
M_u^\phi(q)
\right\rvert^2
\right)^{1/2}
\left(
\widetilde{\mathbb E}
\left\lvert
M_{B,n}^{\psi}(q)-M_B^\psi(q)
\right\rvert^2
\right)^{1/2}
\longrightarrow0,
\end{align}
which implies, more precisely, \begin{align}
M_{u,n}^{\phi}(q)M_{B,n}^{\psi}(q)
\longrightarrow
M_u^\phi(q)M_B^\psi(q)
\qquad
\text{in }L^1(\widetilde\Omega).
\label{JK42A1}
\end{align}
Applying \eqref{JK42A1} at $q=t,r$, we write
\begin{align}
&
\left(
M_{u,n}^{\phi}(t)M_{B,n}^{\psi}(t)
-
M_{u,n}^{\phi}(r)M_{B,n}^{\psi}(r)
\right)
g(\widetilde u^n,\widetilde B^n)
\nonumber\\
&\quad-
\left(
M_u^\phi(t)M_B^\psi(t)
-
M_u^\phi(r)M_B^\psi(r)
\right)
g(\widetilde u,\widetilde B)
\nonumber\\
&=
\left[
M_{u,n}^{\phi}(t)M_{B,n}^{\psi}(t)
-
M_u^\phi(t)M_B^\psi(t)
\right]
g(\widetilde u^n,\widetilde B^n)
\nonumber\\
&\quad-
\left[
M_{u,n}^{\phi}(r)M_{B,n}^{\psi}(r)
-
M_u^\phi(r)M_B^\psi(r)
\right]
g(\widetilde u^n,\widetilde B^n)
\nonumber\\
&\quad+
\left(
M_u^\phi(t)M_B^\psi(t)
-
M_u^\phi(r)M_B^\psi(r)
\right)
\left[
g(\widetilde u^n,\widetilde B^n)
-
g(\widetilde u,\widetilde B)
\right].
\end{align}
By \eqref{JK42A1} at $q=t,r$ and the boundedness of
$g$, the first two terms on the right-hand side converge to zero in
$L^1(\widetilde\Omega)$. Moreover, by
\eqref{ISLAMR1}-\eqref{HJKT12} and H\"older's inequality,
\begin{align}
M_u^\phi(t)M_B^\psi(t)
-
M_u^\phi(r)M_B^\psi(r)
\in L^1(\widetilde\Omega).
\end{align}
Therefore, \eqref{M2C} gives the almost sure convergence of the
last factor in the third term, while \eqref{M2Lo} provides the
integrable domination. Hence, by the dominated convergence theorem,
the third term also converges to zero in $L^1(\widetilde\Omega)$.
Consequently, passing to the limit in
\eqref{T1P1M2} gives
\begin{align}
\widetilde{\mathbb E}
\left[
\left(
M_u^\phi(t)M_B^\psi(t)
-
M_u^\phi(r)M_B^\psi(r)
\right)
g(\widetilde u,\widetilde B)
\right]
=
0.
\label{j45t1}
\end{align}
Using the same
monotone-class argument used in passing from \eqref{MCV} to
\eqref{KHJ54c}, together with \eqref{KLCx}, \eqref{0oL3},
\eqref{ISLAMR1}, and
\eqref{HJKT12}, yields
\begin{align}
M_u^\phi M_B^\psi
\quad\textrm{is an }
\{\widetilde{\mathcal F}_t\}_{t\in[0,T]}\textrm{-martingale}.
\label{Mh5}
\end{align}
By \eqref{GHTL2}, \eqref{lr2q},
\eqref{ISLAMR1}, and
\eqref{HJKT12}, $M_u^\phi$ and $M_B^\psi$ are
square-integrable martingales. Since \eqref{Mh5} gives
\begin{align}
M_u^\phi M_B^\psi-0
\quad\textrm{is a martingale},
\end{align}
and the zero process is predictable and of finite variation, the
uniqueness assertion in
\cite[Theorem~I.4.2]{jacodshiryaev} gives
\begin{align}
\left\langle\!\left\langle
M_u^\phi,M_B^\psi
\right\rangle\!\right\rangle_t
=
0,
\qquad
0\leq t\leq T,
\qquad
\widetilde{\mathbb P}\text{-a.s.}
\label{jb23}
\end{align}
Combining \eqref{jb23} with
\eqref{A1U13} and
\eqref{L1sec2}, we obtain
\begin{equation}
\begin{aligned}
&\left\langle\!\left\langle
M_u^\phi
\right\rangle\!\right\rangle_t
=
\int_0^t
\left\lVert
\sigma_1^\ast
(s,\widetilde u(s),\widetilde B(s))\phi
\right\rVert_{\mathfrak U_1}^2\,ds,
\\
&\left\langle\!\left\langle
M_B^\psi
\right\rangle\!\right\rangle_t
=
\int_0^t
\left\lVert
\sigma_2^\ast
(s,\widetilde u(s),\widetilde B(s))\psi
\right\rVert_{\mathfrak U_2}^2\,ds,
\\
&\left\langle\!\left\langle
M_u^\phi,M_B^\psi
\right\rangle\!\right\rangle_t
=
0.
\label{OPF}
\end{aligned}
\end{equation}
\textit{We now transfer the verification of {\rm (M2)} from
$\widetilde{\mathbb P}$ to $\widehat{\mathbb P}$.}
From 
 \eqref{GHTL2}, \eqref{lr2q},
\eqref{CM2L1}, 
\eqref{L1M1A1}, \eqref{ISLAMR1}, and \eqref{HJKT12} together  with \eqref{LLaW},  we obtain 
\begin{align}
M_u^\phi\; \textrm{and}\; M_B^\psi\; \textrm{are continuous square-integrable}\; (\mathcal F_t,\widehat{\mathbb P})\;\textrm{-martingales on}\; \mathbb R_+.
\label{AS1X1}
\end{align}
By \eqref{C1M12C1}, we obtain
\begin{align}
&
\widetilde{\mathbb E}
\left[
\left(
(M_u^\phi(t))^2
-
(M_u^\phi(r))^2
-
\int_r^t
\left\lVert 
\sigma_1^\ast
\bigl(
s,\widetilde u(s),\widetilde B(s)
\bigr)\phi
\right\rVert_{\mathfrak U_1}^2\,ds
\right)
g(\widetilde u,\widetilde B)
\right]
=
0.
\label{degJ5}
\end{align}
Hence, using \eqref{LLaW}, from \eqref{degJ5} we obtain
\begin{align}
&
\widehat{\mathbb E}
\left[
\left(
(M_u^\phi(t))^2
-
(M_u^\phi(r))^2
-
\int_r^t
\left\lVert 
\sigma_1^\ast
\bigl(
s,u(s),B(s)
\bigr)\phi
\right\rVert_{\mathfrak U_1}^2\,ds
\right)
g(u,B)
\right]
\nonumber\\
&\qquad
=
\widetilde{\mathbb E}
\left[
\left(
(M_u^\phi(t))^2
-
(M_u^\phi(r))^2
-
\int_r^t
\left\lVert 
\sigma_1^\ast
\bigl(
s,\widetilde u(s),\widetilde B(s)
\bigr)\phi
\right\rVert_{\mathfrak U_1}^2\,ds
\right)
g(\widetilde u,\widetilde B)
\right]
\nonumber\\
&\qquad
=
0,
\end{align}
consequently,
\begin{align}
\left(M_u^\phi(t)\right)^2
-
\int_0^t
\left\lVert 
\sigma_1^\ast
\bigl(
s,u(s),B(s)
\bigr)\phi
\right\rVert_{\mathfrak U_1}^2\,ds
\label{JK5t}
\end{align}
is an
$(\mathcal F_t,\widehat{\mathbb P})$-martingale. By \eqref{GHTL2}, \eqref{CM2L1} and \eqref{ISLAMR1} with \eqref{LLaW}, 
$M_u^\phi$ is a continuous square-integrable martingale, and using the same argument in \eqref{jb23} gives
\begin{align}
\left\langle\!\left\langle
M_u^\phi
\right\rangle\!\right\rangle_t
=
\int_0^t
\left\lVert 
\sigma_1^\ast
\bigl(
s,u(s),B(s)
\bigr)\phi
\right\rVert_{\mathfrak U_1}^2\,ds,
\qquad t \ge 0 \qquad
\widehat{\mathbb P}\text{-a.s.}
\label{L1M1J1}
\end{align}
Similarly, by \eqref{S1B1M2}, we obtain
\begin{align}
&
\widetilde{\mathbb E}
\left[
\left(
(M_B^\psi(t))^2
-
(M_B^\psi(r))^2
-
\int_r^t
\left\lVert 
\sigma_2^\ast
\bigl(
s,\widetilde u(s),\widetilde B(s)
\bigr)\psi
\right\rVert_{\mathfrak U_2}^2\,ds
\right)
g(\widetilde u,\widetilde B)
\right]
=
0.
\end{align}
Therefore, by \eqref{LLaW}, we obtain
\begin{align}
&
\widehat{\mathbb E}
\left[
\left(
(M_B^\psi(t))^2
-
(M_B^\psi(r))^2
-
\int_r^t
\left\lVert 
\sigma_2^\ast
\bigl(
s,u(s),B(s)
\bigr)\psi
\right\rVert_{\mathfrak U_2}^2\,ds
\right)
g(u,B)
\right]
\nonumber\\
&\qquad
=
\widetilde{\mathbb E}
\left[
\left(
(M_B^\psi(t))^2
-
(M_B^\psi(r))^2
-
\int_r^t
\left\lVert 
\sigma_2^\ast
\bigl(
s,\widetilde u(s),\widetilde B(s)
\bigr)\psi
\right\rVert_{\mathfrak U_2}^2\,ds
\right)
g(\widetilde u,\widetilde B)
\right]
\nonumber\\
&\qquad
=
0,
\end{align}
consequently,
\begin{align}
\left(M_B^\psi(t)\right)^2
-
\int_0^t
\left\lVert 
\sigma_2^\ast
\bigl(
s,u(s),B(s)
\bigr)\psi
\right\rVert_{\mathfrak U_2}^2\,ds
\label{yt43}
\end{align}
is an
$(\mathcal F_t,\widehat{\mathbb P})$-martingale. Hence
\begin{align}
\left\langle\!\left\langle
M_B^\psi
\right\rangle\!\right\rangle_t
=
\int_0^t
\left\lVert 
\sigma_2^\ast
\bigl(
s,u(s),B(s)
\bigr)\psi
\right\rVert_{\mathfrak U_2}^2\,ds,
\qquad t\ge 0 \qquad
\widehat{\mathbb P}\text{-a.s.}
\label{GHR52}
\end{align}
Finally, by \eqref{Mh5}, we obtain
\begin{align}
\widetilde{\mathbb E}
\left[
\left(
M_u^\phi(t)M_B^\psi(t)
-
M_u^\phi(r)M_B^\psi(r)
\right)
g(\widetilde u,\widetilde B)
\right]
=
0.
\end{align}
Using \eqref{LLaW}, we obtain
$$
\widehat{\mathbb E}
\left[
\left(
M_u^\phi(t)M_B^\psi(t)
-
M_u^\phi(r)M_B^\psi(r)
\right)
g(u,B)
\right]
=
\widetilde{\mathbb E}
\left[
\left(
M_u^\phi(t)M_B^\psi(t)
-
M_u^\phi(r)M_B^\psi(r)
\right)
g(\widetilde u,\widetilde B)
\right]
=
0,
$$
hence
\begin{align}
M_u^\phi M_B^\psi
\quad\text{is an }
(\mathcal F_t,\widehat{\mathbb P})\text{-martingale}.
\end{align}
Since \eqref{GHTL2}, \eqref{lr2q},
\eqref{CM2L1}, 
\eqref{L1M1A1}, \eqref{ISLAMR1}, and \eqref{HJKT12} with \eqref{LLaW} show that $M_u^\phi$ and
$M_B^\psi$ are continuous square-integrable martingales, while
\eqref{Mh5} shows that $M_u^\phi M_B^\psi$ is a martingale, the uniqueness
assertion in \cite[Theorem~I.4.2]{jacodshiryaev} yields
\begin{align}
\left\langle\!\left\langle
M_u^\phi,M_B^\psi
\right\rangle\!\right\rangle_t
=
0,
\qquad
t\ge 0,
\qquad
\widehat{\mathbb P}\text{-a.s.}
\label{CC1Mw1}
\end{align}
Combining
\eqref{AS1X1},
\eqref{L1M1J1},
\eqref{GHR52},
and
\eqref{CC1Mw1},
we conclude that
$\widehat{\mathbb P}$
satisfies Definition~\ref{DEF}{\rm (M2)}.
\medskip

\noindent We now turn to the
verification of Definition~\ref{DEF}{\rm (M3)}.

\paragraph{Verification of Definition~\ref{DEF} $(M3)$.}

By \eqref{olTilde} and weak lower semicontinuity,
\begin{align}
\widetilde{\mathbb E}
\Bigg[
\sup_{0\le t\le T}
\bigl(
\lVert\widetilde u(t)\rVert_{L^2}^2
+
\lVert\widetilde B(t)\rVert_{L^2}^2
\bigr)
+
\int_0^T
\bigl(
\nu\lVert\partial_1\widetilde u\rVert_{L^2}^2
+
\eta\lVert\nabla\widetilde B\rVert_{L^2}^2
\bigr)\,dt
\Bigg]
\le C_T.
\label{M3M}
\end{align}
Since \eqref{LLaW} gives
$\widehat{\mathbb P}
=
\mathcal L_{\widetilde{\mathbb P}}(\widetilde u,\widetilde B)$,
the  estimate \eqref{M3M} yields
\begin{align}
\mathbb E^{\widehat{\mathbb P}}
\Bigg[
\sup_{0\le t\le T}
\bigl(
\lVert u(t)\rVert_{L^2}^2
+
\lVert B(t)\rVert_{L^2}^2
\bigr)
+
\int_0^T
\bigl(
\nu\lVert\partial_1u\rVert_{L^2}^2
+
\eta\lVert\nabla B\rVert_{L^2}^2
\bigr)\,dt
\Bigg]
\le C_T,
\end{align}
which shows that $(M3)$ holds.
\noindent

\medskip
\noindent
We now use the martingale representation theorem to pass from the
martingale solution established above to a global probabilistically
weak solution of \eqref{MHD} on $[0,\infty)$ in the sense of
Definition~\ref{d-w-s}.\qed
\subsection*{Proof of Theorem~\ref{thm10} (Part II: Existence of a probabilistically weak solution)}

\noindent We divide the argument into four parts.  In Part~1, we construct a
continuous square-integrable $H\times H$-valued martingale
$\mathbf M$. In Part~2, we construct a predictable $L_2^0$-valued
process $\Phi$ and prove that
$\widehat{\mathbb E}\int_0^T
\lVert\Phi(s)\rVert_{L_2^0}^2\,ds<\infty$ for every $T>0$. In Part~3,
we identify the operator-valued quadratic variation of $\mathbf M$ by
showing that
$$
\left\langle\!\left\langle\mathbf M\right\rangle\!\right\rangle_t
=
\int_0^t
\bigl(\Phi(s)Q^{1/2}\bigr)
\bigl(\Phi(s)Q^{1/2}\bigr)^\ast\,ds,
\qquad t\geq0.
$$
Finally, in Part~4, we apply the martingale representation theorem and
recover a global probabilistically weak solution of \eqref{MHD}.

\medskip
\noindent 
\textit{Part 1. Construction of an $H\times H$-valued martingale.}

\noindent Fix an arbitrary $T>0$.  We work on the stochastic basis
$(\bar\Omega,\mathcal B(\bar\Omega),
\{\mathcal F_t\}_{t\geq0},\widehat{\mathbb P})$, where
$\mathcal F_t=\sigma(u(s),B(s):0\leq s\leq t)$ by \eqref{MF1}.
By \eqref{GHTL2}, \eqref{lr2q},
\eqref{CM2L1}, 
\eqref{L1M1A1}, \eqref{ISLAMR1}, and \eqref{HJKT12} with \eqref{LLaW},
$M_u^\phi$ and $M_B^\psi$ are continuous square-integrable
$(\mathcal F_t,\widehat{\mathbb P})$-martingales for every
divergence-free $\phi,\psi\in C^\infty$. 
Using the divergence-free Fourier orthonormal basis $\{e_k\}_{k\geq1}$ 
of $H$ given in section \ref{5GHT}, we define
\begin{align}
M_{u,N}(t)&:=\sum_{k=1}^N M_u^{e_k}(t)e_k,
\label{L2p2w2v}\\
M_{B,N}(t)&:=\sum_{k=1}^N M_B^{e_k}(t)e_k,
\qquad
\mathbf M_N(t):=(M_{u,N}(t),M_{B,N}(t)).
\label{L1P1w1}
\end{align}
Since these sums are finite, $\mathbf M_N$ is a continuous
square-integrable $H\times H$-valued martingale and
\begin{align}
\widehat{\mathbb E}
[\mathbf M_N(t)\mid\mathcal F_r]
=\mathbf M_N(r),
\qquad
\widehat{\mathbb E}
\lVert\mathbf M_N(t)\rVert_{H\times H}^2<\infty,
\qquad 0\leq r\leq t.
\label{k1w1xg}
\end{align}
For $N>M$, orthonormality, Doob's $L^2$-inequality \cite[Theorem~20, p.~11]{protter2004stochastic}, 
\eqref{L1M1J1} and \eqref{GHR52} imply
\begin{align}
\widehat{\mathbb E}\sup_{0\leq t\leq T}
\lVert\mathbf M_N(t)-\mathbf M_M(t)\rVert_{H\times H}^2
&\leq
4\widehat{\mathbb E}\int_0^T
\sum_{k=M+1}^N
\lVert\sigma_1^\ast(s,u,B)e_k\rVert_{\mathfrak U_1}^2\,ds
\nonumber\\
&\quad+
4\widehat{\mathbb E}\int_0^T
\sum_{k=M+1}^N
\lVert\sigma_2^\ast(s,u,B)e_k\rVert_{\mathfrak U_2}^2\,ds.
\label{Y1x1La}
\end{align}
Since
\begin{align*}
\sum_{k=1}^\infty
\lVert\sigma_i^\ast(s,u,B)e_k\rVert_{\mathfrak U_i}^2
=
\lVert\sigma_i(s,u,B)\rVert_{L_2(\mathfrak U_i;H)}^2,
\qquad i=1,2,
\end{align*}
the sums on the right-hand side of \eqref{Y1x1La} converge to zero and
are dominated by \eqref{HGRT1}. Hence,
by dominated convergence,
\begin{align}
\lim_{M,N\to\infty}
\widehat{\mathbb E}\sup_{0\leq t\leq T}
\lVert\mathbf M_N(t)-\mathbf M_M(t)\rVert_{H\times H}^2=0.
\label{lw12xa}
\end{align}
Hence, by \eqref{lw12xa} and the completeness of
$L^2(\bar\Omega;C([0,T];H\times H))$, there exists
$\mathbf M=(M_u,M_B)\in L^2(\bar\Omega;C([0,T];H\times H))$ such that
\begin{align}
\mathbf M_N\longrightarrow\mathbf M
\quad\text{in }
L^2(\bar\Omega;C([0,T];H\times H)).
\label{zo1p5x}
\end{align}
Since $T>0$ is arbitrary and the limit in
$L^2(\widehat\Omega;C([0,T];H\times H))$ is unique,
\eqref{zo1p5x} defines a 
$\mathbf M=(\mathbf M(t))_{t\geq0}$ on $\mathbb R_+$ such that
\begin{align}
\widehat{\mathbb E}\sup_{0\leq t\leq T}
\lVert\mathbf M(t)\rVert_{H\times H}^2<\infty,
\qquad T>0.
\label{L1e45.}
\end{align}
For $0\leq r\leq t\leq T$,
\eqref{zo1p5x} gives
$\mathbf M_N(r)\to\mathbf M(r)$ and
$\mathbf M_N(t)\to\mathbf M(t)$ in
$L^2(\bar\Omega;H\times H)$. Since $\mathbf M_N(t)$ is
$\mathcal F_t$-measurable, closedness of
$L^2(\bar\Omega,\mathcal F_t,\widehat{\mathbb P};H\times H)$ gives
\begin{align}
\mathbf M(t)\quad\text{is }\mathcal F_t\text{-measurable}.
\label{A1p1w1d}
\end{align}
Moreover, for $G\in L^\infty(\mathcal F_r)$,
\eqref{k1w1xg} and
\eqref{zo1p5x} give
\begin{align}
\widehat{\mathbb E}
\bigl[(\mathbf M(t)-\mathbf M(r))G\bigr]=0.
\label{k3e1a}
\end{align}
Therefore, \eqref{A1p1w1d}, \eqref{zo1p5x},
\eqref{L1e45.},  and \eqref{k3e1a} show that 
\begin{align}
\mathbf M\, \textrm{is a
continuous square-integrable}\, H\times H\textrm{-valued}
(\mathcal F_t,\widehat{\mathbb P})\textrm{-martingale.}
\label{SW1}
\end{align}
By \eqref{L2p2w2v}-\eqref{L1P1w1}, the definition of $P_N$,   give
\begin{align}
(M_{u,N}(t),\phi)_H=M_u^{P_N\phi}(t),
\qquad
(M_{B,N}(t),\psi)_H=M_B^{P_N\psi}(t).
\label{X1Y1}
\end{align}
Since $P_N\phi\to\phi$ and $P_N\psi\to\psi$ in $H^1$, the integrability of $F_1(u,B)$ and $F_2(u,B)$ established in
\eqref{F1IC} and \eqref{F2IC} and
\eqref{zo1p5x} allow $N\to\infty$ in
\eqref{X1Y1}, yielding
\begin{align}
(M_u(t),\phi)_H=M_u^\phi(t),
\qquad
(M_B(t),\psi)_H=M_B^\psi(t),
\qquad t\geq0.
\label{X2Y2z}
\end{align}

\medskip
\noindent
\textit{Part 2. Proof of $\Phi\in L_2^0$, $\Phi$ predictable, and
$\widehat{\mathbb E}\int_0^T
\lVert\Phi(s)\rVert_{L_2^0}^2\,ds<\infty$.}

\noindent  Since \eqref{L1C2}-\eqref{K1M3} hold for every $T>0$, the equality in law \eqref{LLaW} yields
$u,B\in C(\mathbb R_+;H^{-1})$ $\widehat{\mathbb P}$-a.s.
Since $u,B\in C(\mathbb R_+;H^{-1})$ $\widehat{\mathbb P}$-a.s. and  $u$ and $B$ are $\{\mathcal F_t\}_{t\geq0}$-adapted by
\eqref{MF1}, they are
predictable as $H^{-1}$-valued processes. Hence,
\begin{align}
u,B\text{ are predictable as }H^{-1}\text{-valued processes}.
\label{Kl12Sq}
\end{align}
On the other hand, \eqref{kMMwrk} gives
\begin{align}
\widehat{\mathbb E}\int_0^T
\bigl(
\lVert u(s)\rVert_{H^{1,1}}^2+
\lVert B(s)\rVert_{H^{1,1}}^2
\bigr)\,ds\leq C_T,
\qquad T>0.
\label{vh2s}
\end{align}
Since $H^{1,1}\hookrightarrow H^{-1}$ continuously and injectively,
Kuratowski's theorem \cite[Theorem 8.3.7]{cohn2013measure} gives
\begin{align}
H^{1,1}\in\mathcal B(H^{-1}),
\qquad
\mathcal B(H^{1,1})
=
\{A\cap H^{1,1}:A\in\mathcal B(H^{-1})\}.
\label{kjl67x}
\end{align}
By \eqref{kjl67x}, \eqref{Kl12Sq}, and \eqref{vh2s},  it follows from Remark~5.7 in 
\cite{scheutzow2019stochastic} and Exercise~4.2.3(2) in 
\cite[p.~74]{prevot2007concise}, with
$V=H^{1,1}$, $H=H^{-1}$, and $\alpha=2$, that
\begin{align}
u,B\quad\text{are predictable }H^{1,1}\text{-valued processes}.
\label{X3y4s}
\end{align}
Hypothesis~\ref{HYp1} and
\eqref{X3y4s} imply that
$\sigma_i(\cdot,u,B)$ is predictable with values in
$L_2(\mathfrak U_i;H^{1,1})$. Since $H^{1,1}\hookrightarrow H$
continuously,
\begin{align}
\sigma_i(\cdot,u,B)
\quad\text{is predictable with values in }
L_2(\mathfrak U_i;H),
\qquad i=1,2.
\label{m1a2o1}
\end{align}
Set $\mathfrak U:=\mathfrak U_1\oplus\mathfrak U_2$ and choose
positive, self-adjoint, injective trace-class operators $Q_i$ on
$\mathfrak U_i$. Define
\begin{align}
Q:=Q_1\oplus Q_2,
\qquad
Q(\xi_1,\xi_2):=(Q_1\xi_1,Q_2\xi_2) \qquad \textrm{with}\;
(\xi_1,\xi_2)\in\mathfrak U,
\label{J1S23d}
\end{align}
then $Q$ is positive, self-adjoint, injective and trace class, with
$\operatorname{Tr}Q=\operatorname{Tr}Q_1+\operatorname{Tr}Q_2<\infty$,
and
$$
Q^{1/2}=Q_1^{1/2}\oplus Q_2^{1/2}.
$$
Set
$
\mathfrak U_0:=Q^{1/2}\mathfrak U$, and
$
L_2^0:=L_2(\mathfrak U_0;H\times H),
$
where
$(Q^{1/2}\xi,Q^{1/2}\zeta)_{\mathfrak U_0}
:=(\xi,\zeta)_{\mathfrak U}$.
If $\{\psi_j^{(i)}\}_{j\geq1}$ is an orthonormal basis of
$\mathfrak U_i$, then
\begin{align}
\mathcal E_0
:=
\{Q^{1/2}(\psi_j^{(1)},0):j\geq1\}
\cup
\{Q^{1/2}(0,\psi_j^{(2)}):j\geq1\}
\label{2p123}
\end{align}
is an orthonormal basis of $\mathfrak U_0$. Define $\Phi(s):\mathfrak U_0\to H\times H$ by
\begin{align}
\Phi(s)Q^{1/2}(\xi_1,\xi_2)
:=
\bigl(
\sigma_1(s,u(s),B(s))\xi_1,
\sigma_2(s,u(s),B(s))\xi_2
\bigr).
\label{P12f}
\end{align}
Using \eqref{2p123}, we obtain
\begin{align}
\lVert\Phi(s)\rVert_{L_2^0}^2
&=
\lVert\sigma_1(s,u(s),B(s))\rVert_{L_2(\mathfrak U_1;H)}^2
+
\lVert\sigma_2(s,u(s),B(s))\rVert_{L_2(\mathfrak U_2;H)}^2.
\label{P1q1c9}
\end{align}
To verify the predictability of $\Phi$, for
$S_i\in L_2(\mathfrak U_i;H)$, $i=1,2$, define the linear map
$\mathcal J:L_2(\mathfrak U_1;H)\times L_2(\mathfrak U_2;H)\to L_2^0$
by
$$
\mathcal J(S_1,S_2)Q^{1/2}(\xi_1,\xi_2)
:=
(S_1\xi_1,S_2\xi_2).
$$
Using \eqref{2p123}, we have
\begin{align}
\lVert\mathcal J(S_1,S_2)\rVert_{L_2^0}^2
=
\lVert S_1\rVert_{L_2(\mathfrak U_1;H)}^2
+
\lVert S_2\rVert_{L_2(\mathfrak U_2;H)}^2.
\label{J1w1o}
\end{align}
Thus, \eqref{J1w1o} shows that $\mathcal J$ is bounded
linear and hence continuous. Moreover, by
\eqref{P12f},
$\Phi=\mathcal J\bigl(\sigma_1(\cdot,u,B),\sigma_2(\cdot,u,B)\bigr)$.
Therefore, the continuity of $\mathcal J$ together with
\eqref{m1a2o1} yields
\begin{align}
\Phi\quad\text{is a predictable }L_2^0\text{-valued process}.
\label{R5D5X5g}
\end{align}
Finally, \eqref{P1q1c9} and
\eqref{HGRT1} yield
\begin{align}
\widehat{\mathbb E}\int_0^T
\lVert\Phi(s)\rVert_{L_2^0}^2\,ds<\infty,
\qquad T>0.
\label{R1DA1}
\end{align}
\textit{Part 3: Proof of
$\left\langle\!\left\langle\mathbf M\right\rangle\!\right\rangle_t
=
\int_0^t
\bigl(\Phi(s)Q^{1/2}\bigr)
\bigl(\Phi(s)Q^{1/2}\bigr)^\ast\,ds$.}

\noindent Fix $T>0$ and $h_i=(\phi_i,\psi_i)\in H\times H$, $i=1,2$, and set
$h_{i,N}:=(P_N\phi_i,P_N\psi_i)$. Since $P_N$ is the orthogonal
Fourier projection,
\begin{align}
h_{i,N}\longrightarrow h_i\quad\text{in }H\times H,
\qquad
\lVert h_{i,N}\rVert_{H\times H}
\leq
\lVert h_i\rVert_{H\times H}.
\label{R6D6X6h}
\end{align}
Set $X_{i,N}:=(\mathbf M,h_{i,N})_{H\times H}$ and
$X_i:=(\mathbf M,h_i)_{H\times H}$. By
\eqref{L1e45.} and
\eqref{R6D6X6h},
\begin{align}
X_{i,N}\longrightarrow X_i
\quad\text{in }L^2(\bar\Omega;C([0,T])),
\qquad i=1,2.
\label{P1W1a}
\end{align}
Moreover, \eqref{X2Y2z} gives
\begin{align}
X_{i,N}
=
M_u^{P_N\phi_i}+M_B^{P_N\psi_i}.
\label{P2W1b}
\end{align}
By the polarization identity for predictable covariation
\cite[\S~3.4]{da2014stochastic} together with \eqref{CC1Mw1}, we have
$$
\left\langle\!\left\langle
M_u^{P_N\phi_1},M_B^{P_N\psi_2}
\right\rangle\!\right\rangle_t
=
\left\langle\!\left\langle
M_B^{P_N\psi_1},M_u^{P_N\phi_2}
\right\rangle\!\right\rangle_t
=
0.
$$
Hence, by \eqref{P2W1b} and the
bilinearity of predictable covariation
\cite[\S~3.4]{da2014stochastic}, we obtain
\begin{align}
\left\langle\!\left\langle X_{1,N},X_{2,N}\right\rangle\!\right\rangle_t
&=
\int_0^t
\bigl(
\sigma_1^\ast(s,u,B)P_N\phi_1,
\sigma_1^\ast(s,u,B)P_N\phi_2
\bigr)_{\mathfrak U_1}\,ds
\nonumber\\
&\quad+
\int_0^t
\bigl(
\sigma_2^\ast(s,u,B)P_N\psi_1,
\sigma_2^\ast(s,u,B)P_N\psi_2
\bigr)_{\mathfrak U_2}\,ds.
\label{12PW}
\end{align}
Set $A(s):=\Phi(s)Q^{1/2}$. By
\eqref{P12f}, we obtain
\begin{align*}
A(s)(\xi_1,\xi_2)
=
\bigl(
\sigma_1(s,u,B)\xi_1,
\sigma_2(s,u,B)\xi_2
\bigr),
\end{align*}
and hence
\begin{align}
A(s)^\ast(\phi,\psi)
=
\bigl(
\sigma_1^\ast(s,u,B)\phi,
\sigma_2^\ast(s,u,B)\psi
\bigr).
\label{Pw12}
\end{align}
Using \eqref{Pw12} in
\eqref{12PW}, we obtain
\begin{align}
\left\langle\!\left\langle X_{1,N},X_{2,N}\right\rangle\!\right\rangle_t
=
\int_0^t
\bigl(
A(s)A(s)^\ast h_{1,N},
h_{2,N}
\bigr)_{H\times H}\,ds.
\label{z1t1}
\end{align}
By \eqref{P1q1c9} and
\eqref{R1DA1},
\begin{align}
\lVert A(s)\rVert_{L_2(\mathfrak U;H\times H)}^2
&=
\lVert\Phi(s)\rVert_{L_2^0}^2,
&
\widehat{\mathbb E}\int_0^T
\lVert A(s)\rVert_{L_2(\mathfrak U;H\times H)}^2\,ds
&<\infty.
\label{y1v1r}
\end{align}
Define
\begin{align}
C_N(t):=
\int_0^t
\bigl(
A(s)A(s)^\ast h_{1,N},
h_{2,N}
\bigr)_{H\times H}\,ds,
\quad
C(t):=
\int_0^t
\bigl(
A(s)A(s)^\ast h_1,
h_2
\bigr)_{H\times H}\,ds.
\label{g12sq}
\end{align}
Since
$
\lvert(AA^\ast x,y)_{H\times H}\rvert
\leq
\lVert A\rVert_{L_2}^2
\lVert x\rVert_{H\times H}
\lVert y\rVert_{H\times H},
$ using 
\eqref{R6D6X6h} and
\eqref{y1v1r}, it follows  from \eqref{g12sq} that
\begin{align}
\widehat{\mathbb E}
\sup_{0\leq t\leq T}
\lvert C_N(t)-C(t)\rvert
\longrightarrow0.
\label{l1z2}
\end{align}
Likewise, \eqref{P1W1a} and
\eqref{L1e45.} give
\begin{align}
\widehat{\mathbb E}
\sup_{0\leq t\leq T}
\lvert
X_{1,N}(t)X_{2,N}(t)-X_1(t)X_2(t)
\rvert
\longrightarrow0.
\label{t1z2}
\end{align}
By \eqref{z1t1} and the
definition of predictable covariation in
\cite[\S~3.4]{da2014stochastic},
$
Y_N(t):=X_{1,N}(t)X_{2,N}(t)-C_N(t)
$
is an $(\mathcal F_t,\widehat{\mathbb P})$-martingale. Set
$
Y(t):=X_1(t)X_2(t)-C(t).
$
By \eqref{l1z2} and
\eqref{t1z2},
$Y_N\to Y$ in $L^1(\bar\Omega;C([0,T]))$. Thus, for
$0\leq r<t\leq T$ and every bounded
$\mathcal F_r$-measurable random variable $G$,
\begin{align}
\widehat{\mathbb E}
\left[
\bigl(Y(t)-Y(r)\bigr)G
\right]
&=
\lim_{N\to\infty}
\widehat{\mathbb E}
\left[
\bigl(Y_N(t)-Y_N(r)\bigr)G
\right]
=
0,
\end{align}
which shows that $Y$ is an
$(\mathcal F_t,\widehat{\mathbb P})$-martingale. Since
$X_i=(\mathbf M,h_i)_{H\times H}$, the uniqueness of predictable
covariation in \cite[\S~3.4]{da2014stochastic} gives
\begin{align}
\left\langle\!\left\langle
(\mathbf M,h_1)_{H\times H},
(\mathbf M,h_2)_{H\times H}
\right\rangle\!\right\rangle_t
=
\int_0^t
\bigl(
A(s)A(s)^\ast h_1,
h_2
\bigr)_{H\times H}\,ds.
\label{R4D4X4}
\end{align}
Define
\begin{align}
\mathcal Q_{\mathbf M}(t)
:=
\int_0^tA(s)A(s)^\ast\,ds
=
\int_0^t
\bigl(\Phi(s)Q^{1/2}\bigr)
\bigl(\Phi(s)Q^{1/2}\bigr)^\ast\,ds.
\label{R3D3X3}
\end{align}
Since $A(s)$ is Hilbert-Schmidt, $A(s)A(s)^\ast$ is a positive
trace-class operator and
\begin{align}
\operatorname{Tr}\bigl(A(s)A(s)^\ast\bigr)
=
\lVert A(s)\rVert_{L_2(\mathfrak U;H\times H)}^2
=
\lVert\Phi(s)\rVert_{L_2^0}^2.
\label{R2DA1X}
\end{align}
Thus, by \eqref{R1DA1} and
\eqref{R2DA1X},
$\mathcal Q_{\mathbf M}$ is a well-defined
$L_1(H\times H)$-valued process with
$\mathcal Q_{\mathbf M}(0)=0$. Moreover,
\eqref{R5D5X5g} and
\eqref{R3D3X3} show that
$\mathcal Q_{\mathbf M}$ is adapted, while
\eqref{R1DA1} and
\eqref{R2DA1X} give continuity of
$t\mapsto\mathcal Q_{\mathbf M}(t)$ in $L_1(H\times H)$.

\noindent
For $0\leq r\leq t\leq T$ and $h\in H\times H$,
\eqref{R3D3X3} gives
$$
\bigl(
(\mathcal Q_{\mathbf M}(t)-\mathcal Q_{\mathbf M}(r))h,h
\bigr)_{H\times H}
=
\int_r^t
\lVert A(s)^\ast h\rVert_{\mathfrak U}^2\,ds
\geq0,
$$
so $\mathcal Q_{\mathbf M}$ is increasing. Moreover,
\eqref{R4D4X4} and
\eqref{R3D3X3} give
\begin{align}
\bigl(
\mathcal Q_{\mathbf M}(t)h_1,h_2
\bigr)_{H\times H}
=
\left\langle\!\left\langle
(\mathbf M,h_1)_{H\times H},
(\mathbf M,h_2)_{H\times H}
\right\rangle\!\right\rangle_t.
\label{c12zd8}
\end{align}
By \eqref{c12zd8}, for every
$h_1,h_2\in H\times H$, we obtain
$$
(\mathbf M(t),h_1)_{H\times H}
(\mathbf M(t),h_2)_{H\times H}
-
\bigl(
\mathcal Q_{\mathbf M}(t)h_1,h_2
\bigr)_{H\times H}
$$
is an $(\mathcal F_t,\widehat{\mathbb P})$-martingale. Hence
\eqref{R3D3X3} and
\eqref{c12zd8} show that
$\mathcal Q_{\mathbf M}$ satisfies the definition of the quadratic
variation of the $H\times H$-valued martingale $\mathbf M$ in
\cite[\S~3.4]{da2014stochastic}. Its uniqueness
\cite[Proposition~3.13]{da2014stochastic} yields
\begin{align}
\left\langle\!\left\langle
\mathbf M
\right\rangle\!\right\rangle_t
=
\int_0^t
\bigl(\Phi(s)Q^{1/2}\bigr)
\bigl(\Phi(s)Q^{1/2}\bigr)^\ast\,ds,
\qquad
0\leq t\leq T.
\label{3x2zg.}
\end{align}
Since $T>0$ was arbitrary, \eqref{3x2zg.}
holds for every $t\geq0$.

\medskip
\noindent
\medskip
\noindent
\textit{Part 4. Martingale representation and recovery of a
probabilistically weak solution.}
\medskip
\noindent 
By \eqref{SW1},
\eqref{R5D5X5g},
\eqref{R1DA1}, and
\eqref{3x2zg.},
the martingale representation theorem
\cite[Theorem~8.2]{da2014stochastic} yields the enlarged stochastic basis as follows
\begin{align}
(\Omega^\ast,\mathcal F^\ast,\{\mathcal F_t^\ast\}_{t\geq0},
\mathbb P^\ast),
\qquad
\Omega^\ast=\bar\Omega\times\widetilde\Omega,
\qquad
\mathbb P^\ast=\widehat{\mathbb P}\otimes\widetilde{\mathbb P},
\label{J1d2e1x}
\end{align}
together with an $\mathfrak U$-valued $Q$-Wiener process
$\widehat W$. Extending $u,B,\mathbf M$, and $\Phi$ to $\Omega^\ast$
via the projection onto $\bar\Omega$, and denoting the extensions by
$u^\ast,B^\ast,\mathbf M^\ast,\Phi^\ast$, respectively,
\eqref{3x2zg.} and
\cite[Theorem~8.2]{da2014stochastic} yield
\begin{align}
\mathbf M^\ast(t)
=
\int_0^t
\Phi^\ast(s)\,d\widehat W(s),
\qquad t\geq0.
\label{HJ12P}
\end{align}
Since $\mathbb P^\ast=\widehat{\mathbb P}\otimes
\widetilde{\mathbb P}$ in \eqref{J1d2e1x} and
$u^\ast,B^\ast$ depend only on the $\bar\Omega$-variable,
\eqref{LLaW} yields
\begin{align}
\mathcal L_{\mathbb P^\ast}(u^\ast,B^\ast)
=
\mathcal L_{\widehat{\mathbb P}}(u,B)
=
\widehat{\mathbb P}.
\label{P4w4F}
\end{align}
Since $Q=Q_1\oplus Q_2$ by
\eqref{J1S23d}, choose orthonormal eigenbases
$\{e_k^{(i)}\}_{k\geq1}$ of $\mathfrak U_i$ such that
\begin{align}
Q_ie_k^{(i)}
=
\lambda_k^{(i)}e_k^{(i)},
\qquad
\lambda_k^{(i)}>0,
\qquad
\sum_{k=1}^\infty\lambda_k^{(i)}<\infty,
\qquad i=1,2.
\label{S1P12x}
\end{align}
By \eqref{J1S23d} and
\eqref{S1P12x}, the $Q$-Wiener process $\widehat W$ admits the
expansion
\begin{align}
\widehat W(t)
&=
\sum_{k=1}^\infty
\sqrt{\lambda_k^{(1)}}\,
\beta_k^{(1)}(t)(e_k^{(1)},0)
+
\sum_{k=1}^\infty
\sqrt{\lambda_k^{(2)}}\,
\beta_k^{(2)}(t)(0,e_k^{(2)}),
\label{l1swpW}
\end{align}
where
$\{\beta_k^{(1)}\}_{k\geq1}$ and
$\{\beta_k^{(2)}\}_{k\geq1}$ are two independent families of
independent standard real Brownian motions. Define the cylindrical
Wiener processes $W_i$ on $\mathfrak U_i$ by
\begin{align}
W_i(t)\xi
:=
\sum_{k=1}^\infty
\beta_k^{(i)}(t)
(\xi,e_k^{(i)})_{\mathfrak U_i},
\qquad
\xi\in\mathfrak U_i,
\qquad i=1,2,
\label{C1w1Q1}
\end{align}
hence
\eqref{l1swpW} gives
\begin{align}
W_1\quad\text{and}\quad W_2
\quad\text{are independent cylindrical Wiener processes}.
\label{I1L1p1}
\end{align}
By \eqref{l1swpW},
\eqref{C1w1Q1}, and
\eqref{P12f}, we obtain
\begin{align}
\int_0^t
\Phi^\ast(s)\,d\widehat W(s)
&=
\sum_{k=1}^\infty
\int_0^t
\Phi^\ast(s)Q^{1/2}(e_k^{(1)},0)\,
d\beta_k^{(1)}(s)+
\sum_{k=1}^\infty
\int_0^t
\Phi^\ast(s)Q^{1/2}(0,e_k^{(2)})\,
d\beta_k^{(2)}(s)
\nonumber\\
&=
\left(
\sum_{k=1}^\infty
\int_0^t
\sigma_1(s,u^\ast(s),B^\ast(s))e_k^{(1)}
\,d\beta_k^{(1)}(s),
\right.
\nonumber\\
&\hspace{20mm}\left.
\sum_{k=1}^\infty
\int_0^t
\sigma_2(s,u^\ast(s),B^\ast(s))e_k^{(2)}
\,d\beta_k^{(2)}(s)
\right)
\nonumber\\
&=
\left(
\int_0^t
\sigma_1(s,u^\ast(s),B^\ast(s))\,dW_1(s),
\int_0^t
\sigma_2(s,u^\ast(s),B^\ast(s))\,dW_2(s)
\right).
\label{sdPW1}
\end{align}
Consequently, writing
$\mathbf M^\ast=(M_u^\ast,M_B^\ast)$,
\eqref{HJ12P} and
\eqref{sdPW1} give
\begin{align}
M_u^\ast(t)
&=
\int_0^t
\sigma_1(s,u^\ast(s),B^\ast(s))\,dW_1(s),
\label{U1P1W1}\\
M_B^\ast(t)
&=
\int_0^t
\sigma_2(s,u^\ast(s),B^\ast(s))\,dW_2(s).
\label{B1P1Q1}
\end{align}
By \eqref{X2Y2z}, after extending the
processes to $\Omega^\ast$, we have
$$
(M_u^\ast(t),\phi)_H=(M_u^\phi)^\ast(t),
\qquad
(M_B^\ast(t),\psi)_H=(M_B^\psi)^\ast(t),
$$
for every divergence-free $\phi,\psi\in C^\infty$.
Therefore, Definition~\ref{DEF}{\rm (M2)},
\eqref{U1P1W1}, and the first identity above
give
\begin{align}
\langle u^\ast(t),\phi\rangle
=
\langle u_0,\phi\rangle
+
\int_0^t
\langle F_1(u^\ast(s),B^\ast(s)),\phi\rangle\,ds+
\int_0^t
\left\langle
\sigma_1(s,u^\ast(s),B^\ast(s))\,dW_1(s),\phi
\right\rangle,
\label{U321s}
\end{align}
for every divergence-free $\phi\in C^\infty$.
Similarly, Definition~\ref{DEF}{\rm (M2)},
\eqref{B1P1Q1}, and the second identity above
give
\begin{align}
\langle B^\ast(t),\psi\rangle
&=
\langle B_0,\psi\rangle
+
\int_0^t
\langle F_2(u^\ast(s),B^\ast(s)),\psi\rangle\,ds+
\int_0^t
\left\langle
\sigma_2(s,u^\ast(s),B^\ast(s))\,dW_2(s),\psi
\right\rangle,
\label{P5W5z}
\end{align}
for every divergence-free $\psi\in C^\infty$.
By \eqref{P4w4F} and the initial condition of the martingale
solution in Definition~\ref{DEF},
\begin{align}
\mathbb P^\ast
\bigl(
u^\ast(0)=u_0,\ B^\ast(0)=B_0
\bigr)
=
1.
\label{P3W3I}
\end{align}
Moreover, \eqref{P4w4F} and
Definition~\ref{DEF}{\rm (M1)} give
\begin{align}
u^\ast
&\in
L^\infty_{\mathrm{loc}}(\mathbb R_+;H^{0,1})
\cap
L^2_{\mathrm{loc}}(\mathbb R_+;H^{1,1})
\cap
C(\mathbb R_+;H^{-1}),
\label{p7f7a}\\
B^\ast
&\in
L^\infty_{\mathrm{loc}}(\mathbb R_+;H^{0,1})
\cap
L^2_{\mathrm{loc}}(\mathbb R_+;H^1)
\cap
L^2_{\mathrm{loc}}(\mathbb R_+;H^{1,1})
\cap
L^2_{\mathrm{loc}}(\mathbb R_+;H^{0,2})
\cap
C(\mathbb R_+;H^{-1}),
\label{klwa1}
\end{align}
$\mathbb P^\ast$-a.s. Furthermore, by
\eqref{P4w4F} and Definition~\ref{DEF}{\rm (M1)} imply that,
for every $T>0$,
\begin{align}
&\int_0^T
\Bigl(
\lVert F_1(u^\ast(s),B^\ast(s))\rVert_{H^{-1}}
+
\lVert F_2(u^\ast(s),B^\ast(s))\rVert_{H^{-1}}
\Bigr)\,ds
\nonumber\\
&\quad+
\int_0^T
\Bigl(
\lVert
\sigma_1(s,u^\ast(s),B^\ast(s))
\rVert_{L_2(\mathfrak U_1;H)}^2
+
\lVert
\sigma_2(s,u^\ast(s),B^\ast(s))
\rVert_{L_2(\mathfrak U_2;H)}^2
\Bigr)\,ds
<\infty.
\label{I1P1W1}
\end{align}
Thus \eqref{p7f7a}-\eqref{klwa1} verify
Definition~\ref{d-w-s}{\rm (i)}, while
\eqref{I1P1W1} verifies
Definition~\ref{d-w-s}{\rm (ii)}.
Moreover,
\eqref{C1w1Q1}-
\eqref{I1L1p1},
\eqref{U321s}-\eqref{P5W5z}, and
\eqref{P3W3I} verify Definition~\ref{d-w-s}{\rm (iii)}.
Consequently,
$
\bigl((u^\ast,B^\ast),(W_1,W_2)\bigr)
$
is a probabilistically weak solution in the sense of
Definition~\ref{d-w-s} on
$
(\Omega^\ast,\mathcal F^\ast,
\{\mathcal F_t^\ast\}_{t\geq0},\mathbb P^\ast).$ This completes the proof of Theorem~\ref{thm10}\qed
\medskip

\noindent We next prove the uniqueness result stated in Theorem~\ref{thm1}.
\subsection*{Proof of Theorem \eqref{thm1} }

\noindent\noindent Let
$\bigl(u^{(1)},B^{(1)}\bigr)$ and
$\bigl(u^{(2)},B^{(2)}\bigr)$
be two solutions on the same stochastic basis, driven by the same
Wiener processes $W_1,W_2$, with the same initial data, and set
$
U:=u^{(1)}-u^{(2)},
\,
V:=B^{(1)}-B^{(2)},
\,
\bar q:=p^{(1)}-p^{(2)}.
$
For $i=1,2$, define
\begin{align}
\Sigma_i(t)
:=
\sigma_i
\bigl(t,u^{(1)}(t),B^{(1)}(t)\bigr)
-
\sigma_i
\bigl(t,u^{(2)}(t),B^{(2)}(t)\bigr).
\label{DE0}
\end{align}
Subtracting the two solutions in $\eqref{MHD}_1$ and using
\eqref{DE0}, we obtain
\begin{align}
dU
+
\Big[
(U\cdot\nabla)u^{(1)}
+
(u^{(2)}\cdot\nabla)U
-
(V\cdot\nabla)B^{(1)}
-
(B^{(2)}\cdot\nabla)V
-
\nu\partial_1^2U
+
\nabla \bar q
\Big]dt
&=
\Sigma_1\,dW_1,
\label{DE2}
\end{align}
while subtracting the two solutions in $\eqref{MHD}_2$ gives
\begin{align}
dV
+
\Big[
(U\cdot\nabla)B^{(1)}
+
(u^{(2)}\cdot\nabla)V
-
(V\cdot\nabla)u^{(1)}
-
(B^{(2)}\cdot\nabla)U
-
\eta\Delta V
\Big]dt
&=
\Sigma_2\,dW_2.
\label{DE3}
\end{align}
Since $\nabla\cdot U=0$, pairing \eqref{DE2} with $U$ eliminates the
pressure term through
$(\nabla \bar q,U)=-(\bar q,\nabla\cdot U)=0$.
Applying It\^o's formula to
$\lVert U(t)\rVert_{L^2}^{2}$ in \eqref{DE2} gives
\begin{align}
d\lVert U(t)\rVert_{L^2}^{2}
&=
2(U,dU)
+
\lVert \Sigma_1(t)\rVert_{L_2(\mathfrak U_1;L^2)}^{2}\,dt.
\label{I-U}
\end{align}
Substituting \eqref{DE2} into \eqref{I-U}, we obtain
\begin{align}
d\lVert U\rVert_{L^2}^{2}
&=
-2\bigl((U\cdot\nabla)u^{(1)},U\bigr)\,dt
-2\bigl((u^{(2)}\cdot\nabla)U,U\bigr)\,dt
\nonumber\\
&\quad
+
2\bigl((V\cdot\nabla)B^{(1)},U\bigr)\,dt
+
2\bigl((B^{(2)}\cdot\nabla)V,U\bigr)\,dt
\nonumber\\
&\quad
+
2\nu(\partial_1^2U,U)\,dt
-
2(\nabla q,U)\,dt
\nonumber\\
&\quad
+
2(U,\Sigma_1\,dW_1)
+
\lVert \Sigma_1\rVert_{L_2(\mathfrak U_1;L^2)}^{2}\,dt.
\label{E1ID}
\end{align}
Applying It\^o's formula to
$\lVert V(t)\rVert_{L^2}^{2}$ in \eqref{DE3} gives
\begin{align}
d\lVert V(t)\rVert_{L^2}^{2}
&=
2(V,dV)
+
\lVert \Sigma_2(t)\rVert_{L_2(\mathfrak U_2;L^2)}^{2}\,dt.
\label{Ito-H}
\end{align}
Substituting \eqref{DE3} into \eqref{Ito-H}, we obtain
\begin{align}
d\lVert V\rVert_{L^2}^{2}
&=
-2\bigl((U\cdot\nabla)B^{(1)},V\bigr)\,dt
-
2\bigl((u^{(2)}\cdot\nabla)V,V\bigr)\,dt
\nonumber\\
&\quad
+
2\bigl((V\cdot\nabla)u^{(1)},V\bigr)\,dt
+
2\bigl((B^{(2)}\cdot\nabla)U,V\bigr)\,dt
\nonumber\\
&\quad
+
2\eta(\Delta V,V)\,dt
+
2(V,\Sigma_2\,dW_2)
\nonumber\\
&\quad
+
\lVert \Sigma_2\rVert_{L_2(\mathfrak U_2;L^2)}^{2}\,dt.
\label{E2HA}
\end{align}
Adding \eqref{E1ID} and \eqref{E2HA}, and using
$
\nabla\cdot u^{(2)}=0,
$
Since
$\nabla\cdot B^{(2)}=0$, and 
$$
\bigl((B^{(2)}\cdot\nabla)V,U\bigr)
+
\bigl((B^{(2)}\cdot\nabla)U,V\bigr)
=
0,
$$ we obtain
\begin{align}
d\left(
\lVert U(t)\rVert_{L^2}^{2}
+
\lVert V(t)\rVert_{L^2}^{2}
\right)
&
+
2\nu\lVert \partial_1U(t)\rVert_{L^2}^{2}\,dt
+
2\eta\lVert \nabla V(t)\rVert_{L^2}^{2}\,dt
\nonumber\\
&=
\Big[
-2\bigl((U\cdot\nabla)u^{(1)},U\bigr)
+
2\bigl((V\cdot\nabla)B^{(1)},U\bigr)
\nonumber\\
&\qquad
-
2\bigl((U\cdot\nabla)B^{(1)},V\bigr)
+
2\bigl((V\cdot\nabla)u^{(1)},V\bigr)
\Big]dt
\nonumber\\
&\quad
+
\lVert \Sigma_1(t)\rVert_{L_2(\mathfrak U_1;L^2)}^{2}\,dt
+
\lVert \Sigma_2(t)\rVert_{L_2(\mathfrak U_2;L^2)}^{2}\,dt
\nonumber\\
&\quad
+
2(U(t),\Sigma_1(t)\,dW_1(t))
+
2(V(t),\Sigma_2(t)\,dW_2(t)).
\label{FIE1}
\end{align}
We now estimate each of the four nonlinear terms in
\eqref{FIE1} explicitly.
Applying H\"older's inequality, \eqref{Lia}, Young's inequality, we obtain
\begin{align}
\left\lvert
\bigl((U\cdot\nabla)u^{(1)},U\bigr)
\right\rvert
&\le
\lVert U^1\rVert_{L_h^\infty L_v^2}
\lVert\partial_1u^{(1)}\rVert_{L_h^2L_v^\infty}
\lVert U\rVert_{L^2}
+
\lVert U^2\rVert_{L_h^2L_v^\infty}
\lVert\partial_2u^{(1)}\rVert_{L_h^\infty L_v^2}
\lVert U\rVert_{L^2}
\nonumber\\
&\le
C
\left(
\lVert U\rVert_{L^2}^{1/2}
\lVert\partial_1U\rVert_{L^2}^{1/2}
+
\lVert U\rVert_{L^2}
\right)
\nonumber\\
&\times
\left(
\lVert\partial_1u^{(1)}\rVert_{L^2}^{1/2}
\lVert\partial_1\partial_2u^{(1)}\rVert_{L^2}^{1/2}
+
\lVert\partial_1u^{(1)}\rVert_{L^2}
\right)
\lVert U\rVert_{L^2}
\nonumber\\
&+
C
\left(
\lVert U\rVert_{L^2}^{1/2}
\lVert\partial_1U\rVert_{L^2}^{1/2}
+
\lVert U\rVert_{L^2}
\right)
\nonumber\\
&\times
\left(
\lVert\partial_2u^{(1)}\rVert_{L^2}^{1/2}
\lVert\partial_1\partial_2u^{(1)}\rVert_{L^2}^{1/2}
+
\lVert\partial_2u^{(1)}\rVert_{L^2}
\right)
\lVert U\rVert_{L^2}
\nonumber\\
&\le
C
\lVert U\rVert_{L^2}^{3/2}
\lVert\partial_1U\rVert_{L^2}^{1/2}
\lVert\partial_1u^{(1)}\rVert_{L^2}^{1/2}
\lVert\partial_1\partial_2u^{(1)}\rVert_{L^2}^{1/2}
\nonumber\\
&+
C
\lVert U\rVert_{L^2}^{3/2}
\lVert\partial_1U\rVert_{L^2}^{1/2}
\lVert\partial_1u^{(1)}\rVert_{L^2}
\nonumber\\
&+
C
\lVert U\rVert_{L^2}^{2}
\lVert\partial_1u^{(1)}\rVert_{L^2}^{1/2}
\lVert\partial_1\partial_2u^{(1)}\rVert_{L^2}^{1/2}
\nonumber\\
&+
C
\lVert U\rVert_{L^2}^{2}
\lVert\partial_1u^{(1)}\rVert_{L^2}
\nonumber\\
&+
C
\lVert U\rVert_{L^2}^{3/2}
\lVert\partial_1U\rVert_{L^2}^{1/2}
\lVert\partial_2u^{(1)}\rVert_{L^2}^{1/2}
\lVert\partial_1\partial_2u^{(1)}\rVert_{L^2}^{1/2}
\nonumber\\
&+
C
\lVert U\rVert_{L^2}^{3/2}
\lVert\partial_1U\rVert_{L^2}^{1/2}
\lVert\partial_2u^{(1)}\rVert_{L^2}
\nonumber\\
&+
C
\lVert U\rVert_{L^2}^{2}
\lVert\partial_2u^{(1)}\rVert_{L^2}^{1/2}
\lVert\partial_1\partial_2u^{(1)}\rVert_{L^2}^{1/2}
+
C
\lVert U\rVert_{L^2}^{2}
\lVert\partial_2u^{(1)}\rVert_{L^2}
\nonumber\\
&\le
\frac{\nu}{4}
\lVert\partial_1U\rVert_{L^2}^{2}
+
C_\nu
\Big[
\lVert\partial_1u^{(1)}\rVert_{L^2}^{2/3}
\lVert\partial_1\partial_2u^{(1)}\rVert_{L^2}^{2/3}
+
\lVert\partial_1u^{(1)}\rVert_{L^2}^{4/3}
\nonumber\\
&
+
\lVert\partial_2u^{(1)}\rVert_{L^2}^{2/3}
\lVert\partial_1\partial_2u^{(1)}\rVert_{L^2}^{2/3}
+
\lVert\partial_2u^{(1)}\rVert_{L^2}^{4/3}
\nonumber\\
&
+
\lVert\partial_1u^{(1)}\rVert_{L^2}^{1/2}
\lVert\partial_1\partial_2u^{(1)}\rVert_{L^2}^{1/2}
+
\lVert\partial_1u^{(1)}\rVert_{L^2}
\nonumber\\
&
+
\lVert\partial_2u^{(1)}\rVert_{L^2}^{1/2}
\lVert\partial_1\partial_2u^{(1)}\rVert_{L^2}^{1/2}
+
\lVert\partial_2u^{(1)}\rVert_{L^2}
\Big]
\lVert U\rVert_{L^2}^{2}
\nonumber\\
&\leq\frac{\nu}{4}\lVert \partial_1U\rVert_{L^2}^{2}
+ R_u(t)\lVert U\rVert_{L^2}^{2}. 
\label{DE5}
\end{align}
where 
\begin{align}
R_u(t)=&
C(1
+
\lVert \partial_1u^{(1)}(t)\rVert_{L^2}
+
\lVert \partial_2u^{(1)}(t)\rVert_{L^2}
+
\lVert \partial_1u^{(1)}(t)\rVert_{L^2}^{2}
+
\lVert \partial_2u^{(1)}(t)\rVert_{L^2}^{2}
+
\nonumber\\
&\lVert \partial_1\partial_2u^{(1)}(t)\rVert_{L^2}
+
\lVert \partial_1\partial_2u^{(1)}(t)\rVert_{L^2}^{2}).
\label{R_u}
\end{align}
Applying H\"older's inequality, \eqref{Lia}, Young's inequality, we obtain
\begin{align}
&
\left\lvert
\bigl((V\cdot\nabla)B^{(1)},U\bigr)
\right\rvert
\nonumber\\
&\le
C
\lVert V\rVert_{L^2}^{1/2}
\lVert \partial_1V\rVert_{L^2}^{1/2}
\lVert U\rVert_{L^2}
\lVert \partial_1B^{(1)}\rVert_{L^2}^{1/2}
\lVert \partial_1\partial_2B^{(1)}\rVert_{L^2}^{1/2}
\nonumber\\
&\quad+
C
\lVert V\rVert_{L^2}^{1/2}
\lVert \partial_1V\rVert_{L^2}^{1/2}
\lVert U\rVert_{L^2}
\lVert \partial_1B^{(1)}\rVert_{L^2}
\nonumber\\
&\quad+
C
\lVert V\rVert_{L^2}
\lVert U\rVert_{L^2}
\lVert \partial_1B^{(1)}\rVert_{L^2}^{1/2}
\lVert \partial_1\partial_2B^{(1)}\rVert_{L^2}^{1/2}
\nonumber\\
&\quad+
C
\lVert V\rVert_{L^2}
\lVert U\rVert_{L^2}
\lVert \partial_1B^{(1)}\rVert_{L^2}
\nonumber\\
&\quad+
C
\lVert V\rVert_{L^2}^{1/2}
\lVert \partial_1V\rVert_{L^2}^{1/2}
\lVert U\rVert_{L^2}
\lVert \partial_2B^{(1)}\rVert_{L^2}^{1/2}
\lVert \partial_1\partial_2B^{(1)}\rVert_{L^2}^{1/2}
\nonumber\\
&\quad+
C
\lVert V\rVert_{L^2}^{1/2}
\lVert \partial_1V\rVert_{L^2}^{1/2}
\lVert U\rVert_{L^2}
\lVert \partial_2B^{(1)}\rVert_{L^2}
\nonumber\\
&\quad+
C
\lVert V\rVert_{L^2}
\lVert U\rVert_{L^2}
\lVert \partial_2B^{(1)}\rVert_{L^2}^{1/2}
\lVert \partial_1\partial_2B^{(1)}\rVert_{L^2}^{1/2}
\nonumber\\
&\quad+
C
\lVert V\rVert_{L^2}
\lVert U\rVert_{L^2}
\lVert \partial_2B^{(1)}\rVert_{L^2}
\nonumber\\
&\le
\frac{\eta}{4}\lVert \partial_1V\rVert_{L^2}^{2}+
C_\eta
\Big[
\lVert \partial_1B^{(1)}\rVert_{L^2}^{2/3}
\lVert \partial_1\partial_2B^{(1)}\rVert_{L^2}^{2/3}
+
\lVert \partial_1B^{(1)}\rVert_{L^2}^{4/3}
\nonumber\\
&\qquad+
\lVert \partial_2B^{(1)}\rVert_{L^2}^{2/3}
\lVert \partial_1\partial_2B^{(1)}\rVert_{L^2}^{2/3}
+
\lVert \partial_2B^{(1)}\rVert_{L^2}^{4/3}
\nonumber\\
&\qquad+
\lVert \partial_1B^{(1)}\rVert_{L^2}^{1/2}
\lVert \partial_1\partial_2B^{(1)}\rVert_{L^2}^{1/2}
+
\lVert \partial_1B^{(1)}\rVert_{L^2}
\nonumber\\
&+
\lVert \partial_2B^{(1)}\rVert_{L^2}^{1/2}
\lVert \partial_1\partial_2B^{(1)}\rVert_{L^2}^{1/2}
+
\lVert \partial_2B^{(1)}\rVert_{L^2}
\Big]
\times
(\lVert U\rVert_{L^2}^{2}
+
\lVert V\rVert_{L^2}^{2})
\nonumber\\
&\le
\frac{\eta}{4}\lVert \nabla V\rVert_{L^2}^{2}
+
C R_B(t)
\left(
\lVert U\rVert_{L^2}^{2}
+
\lVert V\rVert_{L^2}^{2}
\right),
\label{H1E1}
\end{align}
where
\begin{align}
R_B(t)
:={}&
1
+
\lVert \partial_1B^{(1)}(t)\rVert_{L^2}
+
\lVert \partial_2B^{(1)}(t)\rVert_{L^2}
+
\lVert \partial_1B^{(1)}(t)\rVert_{L^2}^{2}
\nonumber\\
&+
\lVert \partial_2B^{(1)}(t)\rVert_{L^2}^{2}
+
\lVert \partial_1\partial_2B^{(1)}(t)\rVert_{L^2}
+
\lVert \partial_1\partial_2B^{(1)}(t)\rVert_{L^2}^{2}.
\label{RB}
\end{align}
Applying H\"older's inequality, \eqref{Lia}, Young's inequality, we obtain
\begin{align}
&
\left\lvert
\bigl((U\cdot\nabla)B^{(1)},V\bigr)
\right\rvert
\nonumber\\
&\le
C
\lVert U\rVert_{L^2}^{1/2}
\lVert \partial_1U\rVert_{L^2}^{1/2}
\lVert V\rVert_{L^2}
\lVert \partial_1B^{(1)}\rVert_{L^2}^{1/2}
\lVert \partial_1\partial_2B^{(1)}\rVert_{L^2}^{1/2}
\nonumber\\
&+
C
\lVert U\rVert_{L^2}^{1/2}
\lVert \partial_1U\rVert_{L^2}^{1/2}
\lVert V\rVert_{L^2}
\lVert \partial_1B^{(1)}\rVert_{L^2}
\nonumber\\
&+
C
\lVert U\rVert_{L^2}
\lVert V\rVert_{L^2}
\lVert \partial_1B^{(1)}\rVert_{L^2}^{1/2}
\lVert \partial_1\partial_2B^{(1)}\rVert_{L^2}^{1/2}
\nonumber\\
&+
C
\lVert U\rVert_{L^2}
\lVert V\rVert_{L^2}
\lVert \partial_1B^{(1)}\rVert_{L^2}
\nonumber\\
&+
C
\lVert U\rVert_{L^2}^{1/2}
\lVert \partial_1U\rVert_{L^2}^{1/2}
\lVert V\rVert_{L^2}
\lVert \partial_2B^{(1)}\rVert_{L^2}^{1/2}
\lVert \partial_1\partial_2B^{(1)}\rVert_{L^2}^{1/2}
\nonumber\\
&+
C
\lVert U\rVert_{L^2}^{1/2}
\lVert \partial_1U\rVert_{L^2}^{1/2}
\lVert V\rVert_{L^2}
\lVert \partial_2B^{(1)}\rVert_{L^2}
\nonumber\\
&+
C
\lVert U\rVert_{L^2}
\lVert V\rVert_{L^2}
\lVert \partial_2B^{(1)}\rVert_{L^2}^{1/2}
\lVert \partial_1\partial_2B^{(1)}\rVert_{L^2}^{1/2}
\nonumber\\
&+
C
\lVert U\rVert_{L^2}
\lVert V\rVert_{L^2}
\lVert \partial_2B^{(1)}\rVert_{L^2}
\nonumber\\
&\le
\frac{\nu}{4}\lVert \partial_1U\rVert_{L^2}^{2}
+
C
\Big[
\lVert \partial_1B^{(1)}\rVert_{L^2}^{2/3}
\lVert \partial_1\partial_2B^{(1)}\rVert_{L^2}^{2/3}
+
\lVert \partial_1B^{(1)}\rVert_{L^2}^{4/3}
\nonumber\\
&+
\lVert \partial_2B^{(1)}\rVert_{L^2}^{2/3}
\lVert \partial_1\partial_2B^{(1)}\rVert_{L^2}^{2/3}
+
\lVert \partial_2B^{(1)}\rVert_{L^2}^{4/3}
\nonumber\\
&+
\lVert \partial_1B^{(1)}\rVert_{L^2}^{1/2}
\lVert \partial_1\partial_2B^{(1)}\rVert_{L^2}^{1/2}
+
\lVert \partial_1B^{(1)}\rVert_{L^2}
\nonumber\\
&+
\lVert \partial_2B^{(1)}\rVert_{L^2}^{1/2}
\lVert \partial_1\partial_2B^{(1)}\rVert_{L^2}^{1/2}
+
\lVert \partial_2B^{(1)}\rVert_{L^2}
\Big]\times
\left(
\lVert U\rVert_{L^2}^{2}
+
\lVert V\rVert_{L^2}^{2}
\right)
\nonumber\\
&\le
\frac{\nu}{4}\lVert \partial_1U\rVert_{L^2}^{2}
+ R_B(t)
\left(
\lVert U\rVert_{L^2}^{2}
+
\lVert V\rVert_{L^2}^{2}
\right),
\label{U1H1}
\end{align}
where $R_B$ is defined in \eqref{RB}.
Similarly, applying H\"older's inequality, \eqref{Lia}, Young's inequality, we obtain
\begin{align}
&
\left\lvert
\bigl((V\cdot\nabla)u^{(1)},V\bigr)
\right\rvert
\nonumber\\
&\le
C
\lVert V\rVert_{L^2}^{3/2}
\lVert \partial_1V\rVert_{L^2}^{1/2}
\lVert \partial_1u^{(1)}\rVert_{L^2}^{1/2}
\lVert \partial_1\partial_2u^{(1)}\rVert_{L^2}^{1/2}
\nonumber\\
&+
C
\lVert V\rVert_{L^2}^{3/2}
\lVert \partial_1V\rVert_{L^2}^{1/2}
\lVert \partial_1u^{(1)}\rVert_{L^2}
\nonumber\\
&+
C
\lVert V\rVert_{L^2}^{2}
\lVert \partial_1u^{(1)}\rVert_{L^2}^{1/2}
\lVert \partial_1\partial_2u^{(1)}\rVert_{L^2}^{1/2}
\nonumber\\
&+
C
\lVert V\rVert_{L^2}^{2}
\lVert \partial_1u^{(1)}\rVert_{L^2}
\nonumber\\
&+
C
\lVert V\rVert_{L^2}^{3/2}
\lVert \partial_1V\rVert_{L^2}^{1/2}
\lVert \partial_2u^{(1)}\rVert_{L^2}^{1/2}
\lVert \partial_1\partial_2u^{(1)}\rVert_{L^2}^{1/2}
\nonumber\\
&+
C
\lVert V\rVert_{L^2}^{3/2}
\lVert \partial_1V\rVert_{L^2}^{1/2}
\lVert \partial_2u^{(1)}\rVert_{L^2}
\nonumber\\
&+
C
\lVert V\rVert_{L^2}^{2}
\lVert \partial_2u^{(1)}\rVert_{L^2}^{1/2}
\lVert \partial_1\partial_2u^{(1)}\rVert_{L^2}^{1/2}
\nonumber\\
&+
C
\lVert V\rVert_{L^2}^{2}
\lVert \partial_2u^{(1)}\rVert_{L^2}
\nonumber\\
&\le
\frac{\eta}{4}\lVert \nabla V\rVert_{L^2}^{2}
+ R_u(t)
\left(
\lVert U\rVert_{L^2}^{2}
+
\lVert V\rVert_{L^2}^{2}
\right),
\label{HER3x}
\end{align}
where $R_u$ is defined in \eqref{R_u}.
By
\eqref{DE5},
\eqref{H1E1},
\eqref{U1H1}, and
\eqref{HER3x},
we have
\begin{align}
&
2\left\lvert
\bigl((U\cdot\nabla)u^{(1)},U\bigr)
\right\rvert
+
2\left\lvert
\bigl((V\cdot\nabla)B^{(1)},U\bigr)
\right\rvert
\nonumber\\
&\quad+
2\left\lvert
\bigl((U\cdot\nabla)B^{(1)},V\bigr)
\right\rvert
+
2\left\lvert
\bigl((V\cdot\nabla)u^{(1)},V\bigr)
\right\rvert
\nonumber\\
&\le
\nu\lVert \partial_1U\rVert_{L^2}^{2}
+
\eta\lVert \nabla V\rVert_{L^2}^{2}
+
C
\bigl(
R_u(t)+R_B(t)
\bigr)
\left(
\lVert U\rVert_{L^2}^{2}
+
\lVert V\rVert_{L^2}^{2}
\right).
\label{N1F1l}
\end{align}
With $R(t):=R_u(t)+R_B(t)$, \eqref{2.28H} and \eqref{0.49A} give $R\in L^1(0,T)$, while \eqref{N1F1l} yields
\begin{align}
&
2\left\lvert
\bigl((U\cdot\nabla)u^{(1)},U\bigr)
\right\rvert
+
2\left\lvert
\bigl((V\cdot\nabla)B^{(1)},U\bigr)
\right\rvert
\nonumber\\
&\quad+
2\left\lvert
\bigl((U\cdot\nabla)B^{(1)},V\bigr)
\right\rvert
+
2\left\lvert
\bigl((V\cdot\nabla)u^{(1)},V\bigr)
\right\rvert
\nonumber\\
&\le
\nu\lVert \partial_1U\rVert_{L^2}^{2}
+
\eta\lVert \nabla V\rVert_{L^2}^{2}
+
CR(t)
\left(
\lVert U\rVert_{L^2}^{2}
+
\lVert V\rVert_{L^2}^{2}
\right).
\label{N1F1lR}
\end{align}
Using \eqref{N1F1lR} in \eqref{FIE1}, we obtain
\begin{align}
&
d\left(
\lVert U(t)\rVert_{L^2}^{2}
+
\lVert V(t)\rVert_{L^2}^{2}
\right)
+
\nu\lVert \partial_1U(t)\rVert_{L^2}^{2}\,dt
+
\eta\lVert \nabla V(t)\rVert_{L^2}^{2}\,dt
\nonumber\\
&\le
CR(t)
\left(
\lVert U(t)\rVert_{L^2}^{2}
+
\lVert V(t)\rVert_{L^2}^{2}
\right)\,dt
\nonumber\\
&\quad+
\left(
\lVert \Sigma_1(t)\rVert_{L_2(\mathfrak U_1;L^2)}^{2}
+
\lVert \Sigma_2(t)\rVert_{L_2(\mathfrak U_2;L^2)}^{2}
\right)\,dt
\nonumber\\
&\quad+
2(U(t),\Sigma_1(t)\,dW_1(t))
+
2(V(t),\Sigma_2(t)\,dW_2(t)).
\label{Lwdz}
\end{align}
By \eqref{LSI}, we obtain
\begin{align}
\lVert \Sigma_1(t)\rVert_{L_2(\mathfrak U_1;L^2)}^{2}
+
\lVert \Sigma_2(t)\rVert_{L_2(\mathfrak U_2;L^2)}^{2}
&\le
L_1
\left(
\lVert U(t)\rVert_{L^2}^{2}
+
\lVert V(t)\rVert_{L^2}^{2}
\right)
\nonumber\\
&+
L_2
\left(
\nu\lVert \partial_1U(t)\rVert_{L^2}^{2}
+
\eta\lVert \nabla V(t)\rVert_{L^2}^{2}
\right).
\label{z1L3da}
\end{align}
Substituting \eqref{z1L3da} into
\eqref{Lwdz}, we obtain
\begin{align}
&
d\left(
\lVert U(t)\rVert_{L^2}^{2}
+
\lVert V(t)\rVert_{L^2}^{2}
\right)
\nonumber\\
&\quad+
(1-L_2)
\left(
\nu\lVert \partial_1U(t)\rVert_{L^2}^{2}
+
\eta\lVert \nabla V(t)\rVert_{L^2}^{2}
\right)\,dt
\nonumber\\
&\le
\bigl(CR(t)+L_1\bigr)
\left(
\lVert U(t)\rVert_{L^2}^{2}
+
\lVert V(t)\rVert_{L^2}^{2}
\right)\,dt
\nonumber\\
&\quad+
2(U(t),\Sigma_1(t)\,dW_1(t))
+
2(V(t),\Sigma_2(t)\,dW_2(t)).
\label{bcd12a}
\end{align}
Define $q(t):=\int_0^t\bigl(CR(s)+L_1\bigr)\,ds$. Since $R \in L^1(0,T)$ due to \eqref{2.28H} and \eqref{0.49A} , $q(t)<\infty$ for every $0\leq t\leq T$, and
 $q'(t)=CR(t)+L_1$ for a.e.
$t\in[0,T]$;  and therefore
$\bigl[e^{-q},\,\lVert U\rVert_{L^2}^{2}
+\lVert V\rVert_{L^2}^{2}\bigr]_t=0$. Applying the product rule gives
\begin{align}
d\left[
e^{-q(t)}
\left(
\lVert U(t)\rVert_{L^2}^{2}
+
\lVert V(t)\rVert_{L^2}^{2}
\right)
\right]
&=
e^{-q(t)}
d\left(
\lVert U(t)\rVert_{L^2}^{2}
+
\lVert V(t)\rVert_{L^2}^{2}
\right)
\nonumber\\
&\quad
-
q'(t)e^{-q(t)}
\left(
\lVert U(t)\rVert_{L^2}^{2}
+
\lVert V(t)\rVert_{L^2}^{2}
\right)\,dt.
\label{bp1a2wz}
\end{align}
Substituting \eqref{bcd12a} into \eqref{bp1a2wz}, we obtain
\begin{align}
&
d\left[
e^{-q(t)}
\left(
\lVert U(t)\rVert_{L^2}^{2}
+
\lVert V(t)\rVert_{L^2}^{2}
\right)
\right]
\nonumber\\
&\quad+
(1-L_2)e^{-q(t)}
\left(
\nu\lVert \partial_1U(t)\rVert_{L^2}^{2}
+
\eta\lVert \nabla V(t)\rVert_{L^2}^{2}
\right)\,dt
\nonumber\\
&\le
2e^{-q(t)}
(U(t),\Sigma_1(t)\,dW_1(t))
+
2e^{-q(t)}
(V(t),\Sigma_2(t)\,dW_2(t)).
\label{d1wa2z}
\end{align}
Integrating \eqref{d1wa2z} from $0$ to $t$, and
using $U(0)=0=V(0)$, we obtain
\begin{align}
&
e^{-q(t)}
\left(
\lVert U(t)\rVert_{L^2}^{2}
+
\lVert V(t)\rVert_{L^2}^{2}
\right)
\nonumber\\
&\quad+
(1-L_2)
\int_0^t
e^{-q(s)}
\left(
\nu\lVert \partial_1U(s)\rVert_{L^2}^{2}
+
\eta\lVert \nabla V(s)\rVert_{L^2}^{2}
\right)\,ds
\nonumber\\
&\le
2\int_0^t
e^{-q(s)}
(U(s),\Sigma_1(s)\,dW_1(s))+
2\int_0^t
e^{-q(s)}
(V(s),\Sigma_2(s)\,dW_2(s)).
\label{y1e1w1}
\end{align}
Taking the supremum over $0\le r\le t$ and taking expectations in
\eqref{y1e1w1},  we obtain
\begin{align}
&
\mathbb E
\sup_{0\le r\le t}
e^{-q(r)}
\left(
\lVert U(r)\rVert_{L^2}^{2}
+
\lVert V(r)\rVert_{L^2}^{2}
\right)
\nonumber\\
&\quad+
(1-L_2)
\mathbb E
\int_0^t
e^{-q(s)}
\left(
\nu\lVert \partial_1U(s)\rVert_{L^2}^{2}
+
\eta\lVert \nabla V(s)\rVert_{L^2}^{2}
\right)\,ds
\nonumber\\
&\le
\mathbb E
\sup_{0\le r\le t}
\left\lvert
2\int_0^r
e^{-q(s)}
(U(s),\Sigma_1(s)\,dW_1(s))
\right.
\left.
+
2\int_0^r
e^{-q(s)}
(V(s),\Sigma_2(s)\,dW_2(s))
\right\rvert.
\label{GHRBDS}
\end{align}
Applying Burkholder-Davis-Gundy inequality applied to the right-hand
side of \eqref{GHRBDS} and using \eqref{LSI}, we obtain
 we obtain
\begin{align}
&
\mathbb E
\sup_{0\le r\le t}
e^{-q(r)}
\left(
\lVert U(r)\rVert_{L^2}^{2}
+
\lVert V(r)\rVert_{L^2}^{2}
\right)
\nonumber\\
&\quad+
(1-L_2)
\mathbb E
\int_0^t
e^{-q(s)}
\left(
\nu\lVert \partial_1U(s)\rVert_{L^2}^{2}
+
\eta\lVert \nabla V(s)\rVert_{L^2}^{2}
\right)\,ds
\nonumber\\
&\le
\beta
\mathbb E
\sup_{0\le r\le t}
e^{-q(r)}
\left(
\lVert U(r)\rVert_{L^2}^{2}
+
\lVert V(r)\rVert_{L^2}^{2}
\right)
\nonumber\\
&\quad+
\frac{9L_1}{\beta}
\mathbb E
\int_0^t
e^{-q(s)}
\left(
\lVert U(s)\rVert_{L^2}^{2}
+
\lVert V(s)\rVert_{L^2}^{2}
\right)\,ds
\nonumber\\
&\quad+
\frac{9L_2}{\beta}
\mathbb E
\int_0^t
e^{-q(s)}
\left(
\nu\lVert \partial_1U(s)\rVert_{L^2}^{2}
+
\eta\lVert \nabla V(s)\rVert_{L^2}^{2}
\right)\,ds.
\label{E1W2A1}
\end{align}
Moving the first and third terms on the right-hand side of
\eqref{E1W2A1} to the left gives
\begin{align}
&
(1-\beta)
\mathbb E
\sup_{0\le r\le t}
e^{-q(r)}
\left(
\lVert U(r)\rVert_{L^2}^{2}
+
\lVert V(r)\rVert_{L^2}^{2}
\right)
\nonumber\\
&\quad+
\left(
1-L_2-\frac{9L_2}{\beta}
\right)
\mathbb E
\int_0^t
e^{-q(s)}
\left(
\nu\lVert \partial_1U(s)\rVert_{L^2}^{2}
+
\eta\lVert \nabla V(s)\rVert_{L^2}^{2}
\right)\,ds
\nonumber\\
&\le
\frac{9L_1}{\beta}
\mathbb E
\int_0^t
e^{-q(s)}
\left(
\lVert U(s)\rVert_{L^2}^{2}
+
\lVert V(s)\rVert_{L^2}^{2}
\right)\,ds.
\label{WECD}
\end{align}
Dropping the second nonnegative term in
\eqref{WECD}, and  dividing by $1-\beta$ and using the definition of supremum  and Tonell's Theorem, we obtain
\begin{align}
\mathbb E
\sup_{0\le r\le t}
e^{-q(r)}
\left(
\lVert U(r)\rVert_{L^2}^{2}
+
\lVert V(r)\rVert_{L^2}^{2}
\right)\le
\frac{9L_1}{\beta(1-\beta)}
\int_0^t
\mathbb E
\sup_{0\le r\le s}
e^{-q(r)}
\left(
\lVert U(r)\rVert_{L^2}^{2}
+
\lVert V(r)\rVert_{L^2}^{2}
\right)\,ds.
\label{G1W1u}
\end{align}
Gronwall's inequality applied to \eqref{G1W1u} gives
$$
\mathbb E
\sup_{0\le s\le t}
e^{-q(s)}
\left(
\lVert U(s)\rVert_{L^2}^{2}
+
\lVert V(s)\rVert_{L^2}^{2}
\right)
=
0,
$$
therefore,
\begin{align}
u^{(1)}=u^{(2)},
\qquad
B^{(1)}=B^{(2)}
\qquad
\text{a.s.}
\end{align}
Thus pathwise uniqueness holds.\qed
\medskip
\bibliographystyle{plain}
\bibliography{reference}

@article{goldys2009martingale,
  title={Martingale solutions and Markov selections for stochastic partial differential equations},
  author={Goldys, Benjamin and R{\"o}ckner, Michael and Zhang, Xicheng},
  journal={Stochastic Processes and their Applications},
  volume={119},
  number={5},
  pages={1725--1764},
  year={2009},
  publisher={Elsevier}
}

@article{yamazaki2026remarks,
  title={Remarks on the two-dimensional magnetohydrodynamics system forced by space-time white noise},
  author={Yamazaki, Kazuo},
  journal={Stochastic Processes and their Applications},
  pages={104893},
  year={2026},
  publisher={Elsevier}
}

@article{yamazaki2016global,
  title={Global martingale solution to the stochastic nonhomogeneous magnetohydrodynamics system},
  author={Yamazaki, Kazuo},
  year={2016}
}

@article{yamazaki2015global,
  title={Global regularity of N-dimensional generalized MHD system with anisotropic dissipation and diffusion},
  author={Yamazaki, Kazuo},
  journal={Nonlinear Analysis: Theory, Methods \& Applications},
  volume={122},
  pages={176--191},
  year={2015},
  publisher={Elsevier}
}

@article{yamazaki2014remarks,
  title={Remarks on the global regularity of the two-dimensional magnetohydrodynamics system with zero dissipation},
  author={Yamazaki, Kazuo},
  journal={Nonlinear Analysis: Theory, Methods \& Applications},
  volume={94},
  pages={194--205},
  year={2014},
  publisher={Elsevier}
}

@book{cohn2013measure,
  title={Measure theory},
  author={Cohn, Donald L},
  volume={2},
  year={2013},
  publisher={Springer}
}

@article{liang2021deterministic,
  title={Deterministic and stochastic 2D Navier-Stokes equations with anisotropic viscosity},
  author={Liang, Siyu and Zhang, Ping and Zhu, Rongchan},
  journal={Journal of Differential Equations},
  volume={275},
  pages={473--508},
  year={2021},
  publisher={Elsevier}
}

@article{scheutzow2019stochastic,
  title={Stochastic partial differential equations},
  author={Scheutzow, Michael},
  journal={Lecture Notes, BMS Advanced Course},
  year={2019}
}

@book{da2014stochastic,
  title={Stochastic equations in infinite dimensions},
  author={Da Prato, Giuseppe and Zabczyk, Jerzy},
  year={2014},
  publisher={Cambridge university press}
}

@article{LiLiuTang2021,
  author  = {Li, Jingna and Liu, Hongxia and Tang, Hao},
  title   = {Stochastic MHD equations with fractional kinematic dissipation and partial magnetic diffusion in $\mathbb{R}^2$},
  journal = {Stochastic Processes and their Applications},
  volume  = {135},
  pages   = {139--182},
  year    = {2021},
  doi     = {10.1016/j.spa.2021.01.008}
}

@book{prevot2007concise,
  title={A concise course on stochastic partial differential equations},
  author={Pr{\'e}v{\^o}t, Claudia and R{\"o}ckner, Michael},
  year={2007},
  publisher={Springer}
}

@book{jacodshiryaev,
  author    = {Jacod, Jean and Shiryaev, Albert N.},
  title     = {Limit Theorems for Stochastic Processes},
  edition   = {2},
  series    = {Grundlehren der mathematischen Wissenschaften},
  volume    = {288},
  publisher = {Springer-Verlag},
  address   = {Berlin},
  year      = {2003}
}

@misc{protter2004stochastic,
  title={Stochastic Integration and Differential equations},
  author={Protter, Phillip E},
  year={2004},
  publisher={Citeseer}
}

@article{DuvautLions1972,
  author  = {Duvaut, Georges and Lions, Jacques-Louis},
  title   = {In{\'e}quations en thermo{\'e}lasticit{\'e} et magn{\'e}tohydrodynamique},
  journal = {Archive for Rational Mechanics and Analysis},
  volume  = {46},
  pages   = {241--279},
  year    = {1972},
  doi     = {10.1007/BF00250512}
}

@article{SermangeTemam1983,
  author  = {Sermange, Michel and Temam, Roger},
  title   = {Some mathematical questions related to the {MHD} equations},
  journal = {Communications on Pure and Applied Mathematics},
  volume  = {36},
  number  = {5},
  pages   = {635--664},
  year    = {1983},
  doi     = {10.1002/cpa.3160360506}
}

@article{Wu2003,
  author  = {Wu, Jiahong},
  title   = {Generalized {MHD} equations},
  journal = {Journal of Differential Equations},
  volume  = {195},
  number  = {2},
  pages   = {284--312},
  year    = {2003},
  doi     = {10.1016/j.jde.2003.07.007}
}

@article{CaoWu2011,
  author  = {Cao, Chongsheng and Wu, Jiahong},
  title   = {Global regularity for the 2{D} {MHD} equations with mixed partial dissipation and magnetic diffusion},
  journal = {Advances in Mathematics},
  volume  = {226},
  number  = {2},
  pages   = {1803--1822},
  year    = {2011},
  doi     = {10.1016/j.aim.2010.08.017}
}

@article{CaoRegmiWu2013,
  author  = {Cao, Chongsheng and Regmi, Dipendra and Wu, Jiahong},
  title   = {The 2{D} {MHD} equations with horizontal dissipation and horizontal magnetic diffusion},
  journal = {Journal of Differential Equations},
  volume  = {254},
  number  = {7},
  pages   = {2661--2681},
  year    = {2013},
  doi     = {10.1016/j.jde.2013.01.002}
}

@article{CaoWuYuan2014,
  author  = {Cao, Chongsheng and Wu, Jiahong and Yuan, Baoquan},
  title   = {The 2{D} incompressible magnetohydrodynamics equations with only magnetic diffusion},
  journal = {SIAM Journal on Mathematical Analysis},
  volume  = {46},
  pages   = {588--602},
  year    = {2014},
  doi     = {10.1137/130937718}
}

@article{DuZhou2015,
  author  = {Du, Lili and Zhou, Deqin},
  title   = {Global well-posedness of two-dimensional magnetohydrodynamic flows with partial dissipation and magnetic diffusion},
  journal = {SIAM Journal on Mathematical Analysis},
  volume  = {47},
  number  = {2},
  pages   = {1562--1589},
  year    = {2015},
  doi     = {10.1137/140959821}
}

@article{JiuNiuWuXuYu2015,
  author  = {Jiu, Quansen and Niu, Dongjuan and Wu, Jiahong and Xu, Xiaojing and Yu, Huan},
  title   = {The 2{D} magnetohydrodynamic equations with magnetic diffusion},
  journal = {Nonlinearity},
  volume  = {28},
  number  = {11},
  pages   = {3935--3955},
  year    = {2015},
  doi     = {10.1088/0951-7715/28/11/3935}
}

@article{DongJiaLiWu2018,
  author  = {Dong, Bo-Qing and Jia, Yan and Li, Jingna and Wu, Jiahong},
  title   = {Global regularity and time decay for the 2{D} magnetohydrodynamic equations with fractional dissipation and partial magnetic diffusion},
  journal = {Journal of Mathematical Fluid Mechanics},
  volume  = {20},
  number  = {4},
  pages   = {1541--1565},
  year    = {2018},
  doi     = {10.1007/s00021-018-0376-3}
}

@article{DongLiWu2019,
  author  = {Dong, Bo-Qing and Li, Jingna and Wu, Jiahong},
  title   = {Global regularity for the 2{D} {MHD} equations with partial hyper-resistivity},
  journal = {International Mathematics Research Notices},
  volume  = {2019},
  number  = {14},
  pages   = {4261--4280},
  year    = {2019},
  doi     = {10.1093/imrn/rnx240}
}

@article{SritharanSundar1999,
  author  = {Sritharan, S. S. and Sundar, P.},
  title   = {The stochastic magneto-hydrodynamic system},
  journal = {Infinite Dimensional Analysis, Quantum Probability and Related Topics},
  volume  = {2},
  number  = {2},
  pages   = {241--265},
  year    = {1999},
  doi     = {10.1142/S0219025799000138}
}

@article{BarbuDaPrato2007,
  author  = {Barbu, Viorel and Da Prato, Giuseppe},
  title   = {Existence and ergodicity for the two-dimensional stochastic magneto-hydrodynamics equations},
  journal = {Applied Mathematics \& Optimization},
  volume  = {56},
  number  = {2},
  pages   = {145--168},
  year    = {2007},
  doi     = {10.1007/s00245-007-0882-2}
}

@article{ChueshovMillet2010,
  author  = {Chueshov, Igor and Millet, Annie},
  title   = {Stochastic 2{D} hydrodynamical type systems: Well posedness and large deviations},
  journal = {Applied Mathematics \& Optimization},
  volume  = {61},
  pages   = {379--420},
  year    = {2010},
  doi     = {10.1007/s00245-009-9091-z}
}

@article{HuangShen2016,
  author  = {Huang, Jianhua and Shen, Tianlong},
  title   = {Well-posedness and dynamics of the stochastic fractional magneto-hydrodynamic equations},
  journal = {Nonlinear Analysis},
  volume  = {133},
  pages   = {102--133},
  year    = {2016},
  doi     = {10.1016/j.na.2015.12.001}
}

@article{HongLiLiu2021,
  author  = {Hong, Wei and Li, Shihu and Liu, Wei},
  title   = {Well-posedness and exponential mixing for stochastic magneto-hydrodynamic equations with fractional dissipations},
  journal = {Frontiers of Mathematics in China},
  volume  = {16},
  number  = {2},
  pages   = {425--457},
  year    = {2021},
  doi     = {10.1007/s11464-021-0910-0}
}
\end{document}